\newtheorem{theorem}{Theorem}[section]
\newtheorem{definition}[theorem]{Definition}
\newtheorem{lemma}[theorem]{Lemma}
\newtheorem{proposition}[theorem]{Proposition}
\newtheorem{remark}[theorem]{Remark}
\begin{document}

\title[Existence and Nonexistence of Extremals for Trudinger-Moser inequalities ] {Existence and Nonexistence of Extremals for Trudinger-Moser inequalities with $L^p$ type perturbation on any bounded planar domains
}
\author{Lu Chen, Rou Jiang, Guozhen Lu and Maochun Zhu}
\address{Key laboratory of Algebraic Lie Theory and Analysis of Ministry of Education,
School of Mathematics and Statistics, Beijing Institute of Technology, Beijing 100081, P. R. China; MSU-BIT-SMBU Joint Research Center of Applied Mathematics, Shenzhen MSU-BIT
University, Shenzhen 518172, China}
\email{chenlu5818804@163.com}
\address{School of Mathematics and Statistics, Wuhan University, Wuhan, 430072,
P. R. China}
\email{jiangrou2023@126.com}
\address{Department of Mathematics\\
University of Connecticut\\
Storrs, CT 06269, USA}
\email{guozhen.lu@uconn.edu}
\address{School of Mathematics and Statistics, Nanjing University of Science and
Technology, Nanjing, 210094, P. R. China\\}
\email{zhumaochun2006@126.com}

\thanks{The first author was partly supported by the National Key Research and Development Program (No.
2022YFA1006900) and National Natural Science Foundation of China (No. 12271027). The second author was supported by Natural Science Foundation of China (12471056). The third author was supported partly by a collaboration grant and Simons Fellowship from the Simons Foundation. The fourth author was supported by Natural Science Foundation of China (12071185). }

\maketitle

\begin{abstract}
In this study, we investigate the perturbed Trudinger-Moser inequalities as follows:
\[
S_\Omega(\lambda,p)=\sup_{\substack{u\in H_{0}^{1}(\Omega)\\\Vert\nabla u\Vert
_{L^{2}\left(  \Omega\right)  }\leq1}}\int_{\Omega}\left(  e^{4\pi u^{2}%
}-\lambda|u|^{p}\right)  dx,
\]
where $1\leq p<\infty$ and $\Omega$ is a bounded domain in $\mathbb{R}^2$. Our results demonstrate that there exists a threshold $\lambda^{\ast}(p)>0$ such that $S_\Omega(\lambda,p)$ is attainable if $\lambda<\lambda^{\ast}(p)$, but unattainable if $\lambda>\lambda^{\ast}(p)$ when $p\in[1,2]$. For $p>2$, however, we show that $S_\Omega(\lambda,p)$ is always attainable for any $\lambda\in \mathbb{R}$. These results are achieved through a refined blow-up analysis, which allow us to establish a sharp Dirichlet energy expansion formula for sequences of solutions to the corresponding Euler-Lagrange equations. The asymmetric nature of our problem poses significant challenges to our analysis. To address these, we will establish an appropriate comparison principle between radial and non-radial solutions of the associated Euler-Lagrange equations. Our study establishes a complete characterization of how $L^p$-type perturbations influence the existence of extremals for critical Trudinger-Moser inequalities on any bounded planar domains, this extends the classical Brezis-Nirenberg problem framework to the two-dimensional settings.

\end{abstract}

	\section{Introduction}

Let $\Omega\subset\mathbb{R}^{N}$, $N\geq2$ be a bounded domain and
$W_{0}^{1,p}(\Omega)$ (denoted as $H_{0}^{1}(\Omega)$ if $p=2$) represent the
completion of $C_{0}^{\infty}(\Omega)$ under the norm%
\[
\Vert \nabla u\Vert_{L^{p}(\Omega)}=\left(  \int_{\Omega}|\nabla u|^{p}dx\right)
^{\frac{1}{p}}\text{.}%
\]
The classical Sobolev embedding theorem tells us that $W_{0}^{1,N}\left(
\Omega\right)  \subseteq L^{p}\left(  \Omega\right)  $ for any $p>1$, but
$W_{0}^{1,N}\left(  \Omega\right)  \not \subset L^{\infty}\left(
\Omega\right)  $. One knows from the works of Yudovi\v{c} \cite{Yud},
Poho\v{z}aev \cite{Poh} and Trudinger \cite{Tru} that $W_{0}^{1,N}\left(
\Omega\right)  $ can be imbedded into the Orlicz space ${L_{\phi_N}}(\Omega)$
with the function $\phi_N(t)={\exp}\left(  {{{\left\vert t\right\vert }%
^{\frac{N}{{N-1}}}}}\right)  -1.$  In 1971, Moser \cite{Mos} sharped this embedding and proved the following Trudinger-Moser inequality
\begin{equation}
\sup_{\Vert\nabla u\Vert_{L^{N}(\Omega)}\leq1}\int_{\Omega}e^{\alpha|u|^{\frac{N}{N-1}}%
}dx<\infty\text{ iff }\alpha\leq\alpha_{N}:=N\omega_{N-1}
^{\frac{1}{N-1}}, \label{1}%
\end{equation}
where $\alpha_{N}=N\omega_{N-1}^{\frac{1}{N-1}}$, $\omega_{N-1}$ denotes
$\left(  N-1\right)  $-dimensional surface measure of the unit ball.

 Following the classical Trudinger-Moser inequality \eqref{1}, a wealth of related work has appeared. Examples include Trudiner-Moser  {\normalsize }inequalities on various geometric settings: manifolds with conic singularities (\cite{WXChen}), compact Riemannian manifolds (\cite{Fontana,Li1,Li2}), CR spheres (\cite{CohnLu-CPAM}), unbounded domains (\cite{do,Adachi-Tanaka,liruf}), hyperbolic spaces (\cite{MST,LuTang-ANS}), and the Heisenberg group (\cite{CohnLu-IUMJ,LamLu-AIM,LamLuTang-NA,LLZ-CVPDE}). Other developments include higher-order Adams inequalities (\cite{A,RS,LamLu-JDE}), Hardy-Trudinger-Moser inequalities (\cite{WangYe-AIM,LuYang-CVPDE,LLWY}), Hardy-Adams inequalities (\cite{LuYangQ1,LuYangQ2}), affine Trudinger-Moser inequalities (\cite{CLYZ,DLP}),  anisotropic Trudinger-Moser and Adams inequalities (\cite{LSXZ, Wang Xia 2012, ZoZ, ZYCZ}), trace type Trudinger-Moser inequalities (\cite{Cianchi,Chenluzhu-tra,Liliu,Yang1}) and Trudinger-Moser and Adams inequalities with degenerate potentials (\cite{Chenluzhu-PLMS,Chenluzhu-CVPDE,Chenluzhu-ANS}), Trudinger-Moser and Adams inequalities for radial functions without boundary conditions \cite{doOLuPonciano1, doOLuPonciano2, doOLuPonciano3},  etc, to just name a few and we refer the reader to the references within these works for further details.

An interesting problem related to the Trudinger-Moser inequalities lies in
investigating the existence of extremal functions. Carleson and Chang
\cite{Car} first established the existence of extremals for
Trudinger-Moser inequalities on the unit ball through symmetrization
rearrangement inequality combining with the ODE technique. We note that in
order to show the existence of extremal functions, Carleson and Chang
\cite{Car} first estimated the optimal concentration level of the Trudinger-Moser functional  on $W_{0}%
^{1,N}\left(  B_{1}\right)  $: if $\left\{  u_{i}\right\}  $ is a normalized
concentrating sequence in $W_{0}^{1,N}\left(  B_{1}\right)  $, then
\begin{equation*}
\underset{i\rightarrow\infty}{\lim\sup}%
{\displaystyle\int\nolimits_{B_{1}}}
e^{\alpha_{N}|u_{i}|^{\frac{N}{N-1}}}dx\leq|B_{1}|\left(  1+\exp(\sum
_{k=1}^{N-1}\frac{1}{k})\right),%
\end{equation*}
where the right hand side of the inequality above is now called the
Carleson-Chang limit. On the other hand, they constructed a test function such that the Carleson-Chang
limit can be surpassed which demonstrate the existence of extremal functions. After the work \cite{Car}, the existence of extremals of Trudinger-Moser inequalities on any bounded domains on
$\mathbb{R}^{n}$ were also established via harmonic transplantation (see \cite{Flu}, \cite{Lin}). For further developments, including the existence of extremals for Trudinger-Moser inequalities on bounded and unbounded domains, compact Riemannian manifolds, and anisotropic settings, as well as for higher-order Adams inequalities, one can see \cite{Li1, Li2, liruf, Yang1,ZoZ, LSXZ,Chenluzhu, DelaTorre, LuYangY-AIM, Chenluzhu-tra}, and the
references therein. It should be noted that the existence results in these works fundamentally rely on  blow-up analysis.

  We also note that in \cite{Man}, Mancini and Martinazzi present a completely different approach for proving the existence of extremals for the Trudinger-Moser inequality on the disk. They establish the existence result via a sharp Dirichlet energy expansion formula for sequences of subcritical maximizers, a method based on techniques introduced in \cite{Malchiodi} and involves performing a Taylor expansion of the subcritical maximizers near the blow-up point.
  This method can also be utilized to explore the nonexistence of extremals for Trudinger-Moser type inequalities. For example,  based on the works of \cite{Malchiodi,Man,Dru2}, Mancini and Thizy \cite{Man2} reveal that the Adimurthi-Druet inequality (see \cite{Adi})
\[
 \sup _ {u \in H_0^{1}(\Omega), \, \|\nabla u\|_{L^{2}(\Omega)} \leqslant 1} \int _ \Omega \exp (4 \pi u^2(1 + \alpha \| u \| _2^2 )) dx < \infty
\]
does not admit any extremal when the perturbation parameter $\alpha$ approaches $\lambda_1$, the first eigenvalue of $\Delta$ on $\Omega\subset\mathbb{R}^2$ with zero Dirichlet boundary condition. This conclusion is drawn by establishing a sharp expansion of the Dirichlet energy for blowing up sequences of solutions of the corresponding Euler-Lagrange equation. Furthermore, using a similar approach, Thizy \cite{Thi} provided some sharp conditions for the existence and nonexistence of the following perturbed Trudinger-Moser inequality on $\Omega\subset\mathbb{R}^2$: \begin{equation*}
S_{g,4\pi}\left(  \Omega\right)  :=\underset{u\in H_{0}^{1}(\Omega),\left\vert
\left\vert \nabla u\right\vert \right\vert _{L^{2}\left(  \Omega\right)  }%
^{2}\leq1}{\sup}\int_{\Omega}\left(  1+g\left(  u\right)  \right)  \exp\left(
4\pi u^{2}\right)  dx,%
\end{equation*}
where $g$ satisfies $g\left(  t\right)  \rightarrow0$ as $t\rightarrow\infty$. Thizy's result indicates that exponential perturbations can influence the existence and non-existence of extremals of the Trudinger-Moser inequality in bounded domains.

A natural question arises: If we replace the exponential perturbation with a lower-order perturbation, such as the $L^p$ type perturbation, can this lower-order perturbation still impact the existence and nonexistence of extremals for Trudinger-Moser inequalities? It is worth noting that this possibility of effect exists. Drawing a parallel with the well-known Brezis-Nirenberg problem on bounded domains, it is established that a subcritical $ L^p $ perturbation can indeed affect the existence of extremals for the perturbed critical Sobolev inequality. In some special cases of Trudinger-Moser setting, this question has  already been partially addressed. For example, in the recent work \cite{Chenluzhu}, Chen, Lu and Zhu investigated how $L^2$-type perturbations affect the existence and non-existence of extremals for Trudinger-Moser inequalities in the entire space $\mathbb{R}^2$. Their findings reveal that the vanishing phenomenon (first discovered in \cite{Ish}) on the entire space can influence the existence of extremals.   When $\Omega$ is a bounded domain, de Fegueiredo, do \'{O} and Ruf \cite{FDR} considered the following $L^{\frac{N}{N-1}}$ perturbed maximization problem \begin{equation}\label{permu}
S_{\Omega}(\lambda,\frac{N}{N-1})=\underset{u\in W_{0}^{1,N}(\Omega),\left\vert \left\vert
\nabla u\right\vert \right\vert _{L^{N}\left(  \Omega\right)  }^{N}\leq1}%
{\sup}\int_{\Omega}\left(  e^{\alpha_N u^{\frac{N}{N-1}}}-1-\lambda|u|^{\frac{N}{N-1}}\right)  dx,\end{equation}
where $\Omega$ is the unit ball of $\mathbb{R}^N$, and they established the existence of a maximizer for any $\lambda < \alpha_N$. They also  conjectured that $S_{\Omega}(\lambda,\frac{N}{N-1})$ would not be attained when $\lambda \geq \alpha_N$. Subsequently, in \cite{Li2006}, Li gave a negative answer to this conjecture by showing that the supremum above is still attained even $\Omega$ is a general bounded domain when $\lambda$ is slightly larger than $\alpha_N$. More recently, in \cite{Has}, Hashizume revisited the perturbed maximization problem \eqref{permu} within the unit disc in $\mathbb{R}^2$ and showed that there exists a positive threshold for the existence and non-existence of extremals by establishing a sharp Dirichlet energy expansion formula for sequences of maximizers. Indeed, Hashizume's work also explored the effects of general lower-order $L^p$ perturbations. Nevertheless, the question of whether a threshold exists for the existence and non-existence of extremals for $S_{\Omega}(\lambda,p)$ in general bounded domains remains unresolved.

In this paper, we aim to further explore this question by focusing on the following two-dimensional $L^p$ perturbed maximization problem:
 \begin{equation*}
S_{\Omega}(\lambda,p)=\underset{u\in H_{0}^{1}(\Omega),\left\vert \left\vert
\nabla u\right\vert \right\vert _{L^{2}\left(  \Omega\right)  }^{2}\leq1}%
{\sup}\int_{\Omega}\left(  e^{4\pi u^{2}}-1-\lambda|u|^{p}\right)  dx,\end{equation*}
where $\Omega$ is a bounded domain in $\mathbb{R}^2$, $1\leq p<\infty$, and $\lambda$ is a real number. We will clarify the effect of the lower order $L^{p}$ perturbed term and establish the existence of a threshold for the existence and non-existence of extremals. Our main results can be stated as follows:

\begin{theorem}
\label{th1.2}For $p>2$,  $S_{\Omega}\left(
\lambda,p\right)  $ is always attained for any $\lambda>0$.
\end{theorem}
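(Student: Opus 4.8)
The plan is to use the direct method of the calculus of variations, exploiting the fact that when $p>2$ the perturbation $-\lambda|u|^p$ is, near a concentration point, subcritical relative to the borderline exponential term in a way that prevents the Carleson--Chang concentration level from being the obstruction. First I would take a maximizing sequence $\{u_n\}\subset H_0^1(\Omega)$ with $\|\nabla u_n\|_{L^2}\le 1$ and, after normalization and passing to a subsequence, assume $u_n\rightharpoonup u_0$ weakly in $H_0^1(\Omega)$, $u_n\to u_0$ strongly in $L^q(\Omega)$ for every $q<\infty$ and a.e., and $|\nabla u_n|^2\,dx\rightharpoonup |\nabla u_0|^2\,dx+\mu$ for some nonnegative measure $\mu$ with $\mu(\Omega)=1-\|\nabla u_0\|_{L^2}^2=:1-t^2$. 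The $L^p$ term passes to the limit by strong $L^p$ convergence, so $\int_\Omega \lambda|u_n|^p\,dx\to\int_\Omega\lambda|u_0|^p\,dx$; the whole game is therefore about the exponential term. If $u_0=0$, then $u_n\to 0$ in every $L^q$, the exponential integral tends to $|\Omega|$ by the concentration-compactness alternative for Trudinger--Moser, and $S_\Omega(\lambda,p)\le |\Omega|$; but testing with a fixed nonzero $v\in H_0^1(\Omega)$ with $\|\nabla v\|_{L^2}\le 1$ gives $S_\Omega(\lambda,p)\ge\int_\Omega e^{4\pi v^2}\,dx-|\Omega|-\lambda\int_\Omega|v|^p\,dx>|\Omega|-\lambda\int_\Omega |v|^p\,dx$, and by scaling $v\to \varepsilon v$ with $\varepsilon\to 0$ one sees the right side exceeds $|\Omega|$ for small $\varepsilon$ since the exponential gain is quadratic in $\varepsilon$ while the $L^p$ loss is of order $\varepsilon^p$ with $p>2$. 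Hence $u_0\ne 0$, so $t^2=\|\nabla u_0\|_{L^2}^2\in(0,1]$.

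Next I would rule out concentration, i.e. show $\mu=0$, equivalently $u_n\to u_0$ strongly in $H_0^1(\Omega)$. Suppose not; then there is a point $x_0$ with $\mu(\{x_0\})\ge\delta_0>0$ (after the usual covering argument one reduces to finitely many atoms, and in fact to a single one). By the Lions-type concentration-compactness form of the Trudinger--Moser inequality, on the complement of any neighborhood of the atoms $e^{4\pi u_n^2}$ is equi-integrable and converges to $e^{4\pi u_0^2}$, so
\[
\limsup_{n\to\infty}\int_\Omega e^{4\pi u_n^2}\,dx \le \int_\Omega e^{4\pi u_0^2}\,dx + \limsup_{n\to\infty}\int_{B_r(x_0)} e^{4\pi u_n^2}\,dx
\]
for every small $r$. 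The contribution near $x_0$ is controlled by the Carleson--Chang bound: writing $u_n=u_0+w_n$ near $x_0$ with $w_n$ carrying the concentrating part of the gradient, and using $\|\nabla u_0\|_{L^2(B_r(x_0))}^2\to 0$ as $r\to 0$ together with $\|\nabla w_n\|_{L^2}^2\to 1-t^2<1$, the Carleson--Chang estimate (the bounded-domain version behind Theorem 1.1 and the references \cite{Car,Flu,Lin}) yields
\[
\limsup_{n\to\infty}\int_{B_r(x_0)} e^{4\pi u_n^2}\,dx \le o_r(1) + |B_r(x_0)|\,e^{1+o_r(1)} \xrightarrow[r\to 0]{} 0,
\]
because the local mass $|B_r(x_0)|\to 0$ while the exponential level stays bounded by the Carleson--Chang constant $e$ (the point being that $p>2$ makes the perturbation irrelevant to this local estimate, since $\lambda\int_{B_r}|u_n|^p\to 0$ anyway). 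Therefore $\limsup_n\int_\Omega e^{4\pi u_n^2}\,dx\le\int_\Omega e^{4\pi u_0^2}\,dx$, and combined with $\|\nabla u_0\|_{L^2}\le t\le 1$ and weak lower semicontinuity we would get
\[
S_\Omega(\lambda,p)\le \int_\Omega e^{4\pi u_0^2}\,dx - |\Omega| - \lambda\int_\Omega |u_0|^p\,dx,
\]
so $u_0/\|\nabla u_0\|_{L^2}$ (or $u_0$ itself when $t=1$) would already be an extremal — but then, as in the $u_0=0$ case, comparing with a small perturbation $u_0+\varepsilon\varphi$ of $u_0$ contradicts maximality unless $t=1$ and $\mu=0$. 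Making this last comparison precise — that one can always gain by deconcentrating because the exponential term is genuinely super-quadratic while the $L^p$ correction with $p>2$ is genuinely sub-quadratic in the relevant scaling — is the crux of the argument and is exactly where the hypothesis $p>2$ is used; I expect this to be the main obstacle, and I would handle it by the energy-expansion/test-function comparison in the spirit of \cite{Man,Has}, showing that any putative concentrating maximizing sequence has exponential energy strictly below what an explicit spread-out competitor achieves.

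Once $\mu=0$ we have $u_n\to u_0$ strongly in $H_0^1(\Omega)$, hence $\|\nabla u_0\|_{L^2}=1$, and by continuity of all the relevant functionals (the exponential term converges by Vitali since $e^{4\pi u_n^2}$ is now equi-integrable on all of $\Omega$, the $L^p$ term by strong convergence) we conclude
\[
\int_\Omega\left(e^{4\pi u_0^2}-1-\lambda|u_0|^p\right)dx = \lim_{n\to\infty}\int_\Omega\left(e^{4\pi u_n^2}-1-\lambda|u_n|^p\right)dx = S_\Omega(\lambda,p),
\]
so $u_0$ is the desired extremal, attained for every $\lambda>0$ (indeed for every $\lambda\in\mathbb{R}$, the sign of $\lambda$ playing no role in the above). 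I would remark that the argument also makes transparent why $p=2$ is the borderline: for $p=2$ the perturbation scales exactly like the leading correction to the Carleson--Chang level, so deconcentration no longer automatically produces a gain, and one genuinely needs the threshold $\lambda^*(2)$ comparing $\lambda$ against the Carleson--Chang constant.
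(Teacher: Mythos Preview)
Your argument has a genuine gap in the step where you rule out $u_0=0$. You claim that if $u_0=0$ then ``the exponential integral tends to $|\Omega|$ by the concentration-compactness alternative,'' but this is false precisely when the maximizing sequence is a normalized concentrating sequence (NCS): for such a sequence one has $u_n\rightharpoonup 0$ weakly, yet by Proposition~\ref{lem2.2} the limsup of $\int_\Omega e^{4\pi u_n^2}\,dx$ can be as large as $S_\Omega^\delta=|\Omega|+\pi e\sup_{x\in\Omega}r_\Omega^2(x)$, not merely $|\Omega|$. Your scaling test with $\varepsilon v$ only yields $S_\Omega(\lambda,p)>|\Omega|$, which is far weaker than the inequality $S_\Omega(\lambda,p)>S_\Omega^\delta$ that is actually needed to exclude concentrating maximizing sequences. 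Relatedly, your local ``Carleson--Chang'' bound $\limsup_n\int_{B_r(x_0)}e^{4\pi u_n^2}\,dx\le |B_r(x_0)|\,e^{1+o_r(1)}\to 0$ is incorrect: for a sequence concentrating its full unit mass at $x_0$, this local integral does \emph{not} vanish as $r\to 0$ but rather captures the positive Carleson--Chang contribution $\pi e\,r_\Omega^2(x_0)$. Your Step~2 is salvageable once $u_0\neq 0$ is known---then Lions' concentration-compactness principle (not Carleson--Chang) gives equi-integrability of $e^{4\pi u_n^2}$ and hence convergence---but Step~1 cannot be completed without the very test-function comparison you defer to the end.

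The paper proceeds in exactly the order you reverse. It first establishes the abstract criterion (Lemma~\ref{225}): if $S_\Omega(\lambda,p)>S_\Omega^\delta$ then the supremum is attained, by the Lions-type argument you sketch in Step~2. The substantive work, which you leave unperformed, is then to verify $S_\Omega(\lambda,p)>S_\Omega^\delta$ via an explicit Moser-type test function $\phi_\varepsilon$ built from the Green function $G_{\Omega,y_0}$ at the harmonic center $y_0$. The key expansion is
\[
\int_\Omega\bigl(e^{4\pi\phi_\varepsilon^2}-\lambda\phi_\varepsilon^p\bigr)\,dx \ge S_\Omega^\delta+\frac{4\pi\int_\Omega G_{\Omega,y_0}^2\,dx}{C^2}-\frac{\lambda\int_\Omega G_{\Omega,y_0}^p\,dx}{C^p}+O(C^{-4}),
\]
where $C^2\sim -\log\varepsilon\to\infty$; since $p>2$, the positive $C^{-2}$ term dominates the negative $C^{-p}$ term for small $\varepsilon$, yielding the strict inequality. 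This is the computation your proposal needs but does not supply.
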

\begin{theorem}
\label{th1.1} Let $p\in\left[  {1,2}\right]  $. Then there exists a positive
constant ${\lambda}^{\ast}={\lambda}^{\ast}\left(  p\right)  $ such that $S_{\Omega}\left(  \lambda,p\right)  \ $
is attained for $\lambda<{\lambda}^{\ast}$, while for $\lambda>{\lambda}^{\ast}$, $S_{\Omega}\left(
\lambda,p\right) =S_\Omega^\delta$, which is the optimal concentration level of the Trudinger-Moser functional on $\Omega$ (see (\ref{addcon})),  and  $S_{\Omega}\left(
\lambda,p\right)$  is not attained.
\end{theorem}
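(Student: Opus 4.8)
The plan is to treat the two regimes separately, the attainability for small $\lambda$ by a direct-method argument that controls concentration, and the nonexistence for large $\lambda$ by the sharp Dirichlet energy expansion for solutions of the Euler--Lagrange equation. First I would establish that the map $\lambda\mapsto S_\Omega(\lambda,p)$ is non-increasing, convex, and Lipschitz on $\mathbb{R}$, and that $S_\Omega(\lambda,p)\ge S_\Omega^\delta$ for every $\lambda$, where $S_\Omega^\delta$ is the optimal concentration level from $(\ref{addcon})$: this follows by testing with a suitably rescaled Moser-type sequence, along which $\int_\Omega|u_j|^p\,dx\to0$ since $p<\infty$ and the sequence concentrates. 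Hence $S_\Omega(\lambda,p)\ge\max\{|\Omega|,\,S_\Omega^\delta\}$ always. Next I would define
\[
\lambda^\ast=\lambda^\ast(p):=\sup\{\lambda>0:\ S_\Omega(\lambda,p)>S_\Omega^\delta\}.
\]
Because $S_\Omega(0,p)=\sup\int_\Omega(e^{4\pi u^2}-1)\,dx$ is strictly larger than $S_\Omega^\delta$ (the subcritical Trudinger--Moser supremum exceeds the concentration level, a fact that can be quoted from the Carleson--Chang-type analysis underlying $(\ref{addcon})$) and $S_\Omega(\lambda,p)\to-\infty$ balanced against the fixed constant $S_\Omega^\delta$ as $\lambda$ grows—more precisely $S_\Omega(\lambda,p)=S_\Omega^\delta$ once $\lambda$ is large, which is exactly what the blow-up analysis below will show—the threshold $\lambda^\ast$ is a well-defined positive real. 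Monotonicity of $S_\Omega$ then immediately gives $S_\Omega(\lambda,p)>S_\Omega^\delta$ for $\lambda<\lambda^\ast$ and $S_\Omega(\lambda,p)=S_\Omega^\delta$ for $\lambda>\lambda^\ast$.

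For the attainability when $\lambda<\lambda^\ast$: take a maximizing sequence $u_j$ for $S_\Omega(\lambda,p)$ with $\|\nabla u_j\|_2\le1$; up to a subsequence $u_j\rightharpoonup u$ in $H_0^1(\Omega)$ and $u_j\to u$ in every $L^q$. If $u\equiv0$ the Concentration-Compactness alternative for the Trudinger--Moser functional (Lions) forces either vanishing, in which case $\limsup_j\int_\Omega(e^{4\pi u_j^2}-1)\,dx\le|\Omega|\le S_\Omega^\delta$, a contradiction with $S_\Omega(\lambda,p)>S_\Omega^\delta$, or concentration at a point, in which case the standard Carleson--Chang bound gives $\limsup_j\int_\Omega e^{4\pi u_j^2}\,dx\le S_\Omega^\delta$ (using $\int_\Omega|u_j|^p\to0$), again contradicting $S_\Omega(\lambda,p)>S_\Omega^\delta$. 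Hence $u\not\equiv0$, and then $\|\nabla u\|_2^2<1$ would contradict maximality unless $\|\nabla u\|_2=1$; either way, since $\|\nabla(u_j-u)\|_2^2\le1-\|\nabla u\|_2^2+o(1)<1-\varepsilon$, the Trudinger--Moser inequality with a subcritical exponent shows $e^{4\pi u_j^2}$ is equi-integrable, so $\int_\Omega(e^{4\pi u_j^2}-1-\lambda|u_j|^p)\,dx\to\int_\Omega(e^{4\pi u^2}-1-\lambda|u|^p)\,dx$, and $u$ (after normalizing the gradient to $1$, which only increases the functional) is an extremal.

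The main obstacle, and the heart of the matter, is the nonexistence for $\lambda>\lambda^\ast$: one must show $S_\Omega(\lambda,p)$ cannot be attained, equivalently $S_\Omega(\lambda,p)=S_\Omega^\delta$. Suppose for contradiction an extremal exists for some $\lambda>\lambda^\ast$; then by the monotone, convex structure it persists on a whole interval, and in particular there is a sequence $\lambda_k\downarrow\lambda^\ast$ (or we argue at $\lambda^\ast$ itself and perturb) with extremals $u_k$ solving the Euler--Lagrange equation
\[
-\Delta u_k=\frac{1}{\mu_k}\Big(4\pi u_k e^{4\pi u_k^2}-\tfrac{\lambda_k p}{2}|u_k|^{p-2}u_k\Big)\quad\text{in }\Omega,\qquad u_k\in H_0^1(\Omega),
\]
with $\|\nabla u_k\|_2=1$ and $\mu_k$ the Lagrange multiplier. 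Because $S_\Omega(\lambda,p)\to S_\Omega^\delta$ as $\lambda\to\lambda^{\ast+}$ along the would-be extremals, the $u_k$ must blow up (if they stayed bounded in $H_0^1$ away from concentration the limit would be an extremal at $\lambda^\ast$ with energy $S_\Omega^\delta$, forcing it to be a concentrating profile—contradiction). The plan is then to run the refined blow-up analysis announced in the abstract: rescale around the concentration point $x_k\to x_0$, extract the bubble (a solution of the Liouville equation on $\mathbb{R}^2$), and perform a Taylor expansion of $u_k$ near $x_k$ à la Mancini--Martinazzi and Thizy to obtain the sharp Dirichlet energy expansion
\[
1=\|\nabla u_k\|_2^2=\text{(bubble energy)}+\text{(Carleson--Chang/Robin-function term)}+c_p\,\lambda\,(\text{a negative }L^p\text{-contribution})+o(\cdot),
\]
where the sign and size of the $L^p$-term is what distinguishes $p\le2$ (genuine threshold) from $p>2$. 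Matching this against $S_\Omega(\lambda,p)=S_\Omega^\delta$ and against the test-function lower bound that defines $\lambda^\ast$ produces a strict inequality that is violated for $\lambda>\lambda^\ast$, the contradiction. The asymmetry of the functional means one cannot reduce to radial competitors, so the crucial technical input is the comparison principle between radial and non-radial solutions of the Euler--Lagrange equation, promised in the abstract, which is needed to pin down the precise constant in the Robin-function term and to guarantee the blow-up is simple (one bubble, at an interior point). I expect establishing that comparison principle, and correctly computing the coefficient $c_p$ of the $L^p$-perturbation in the energy expansion, to be the most delicate steps.
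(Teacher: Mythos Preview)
Your overall architecture---monotonicity, the lower bound $S_\Omega(\lambda,p)\ge S_\Omega^\delta$, concentration--compactness for attainability when $S_\Omega(\lambda,p)>S_\Omega^\delta$, and a sharp energy expansion for the nonexistence regime---matches the paper's. Your definition $\lambda^\ast=\sup\{\lambda:S_\Omega(\lambda,p)>S_\Omega^\delta\}$ is equivalent to the paper's $\sup\{\lambda:\text{attained}\}$ once Lemma~\ref{225} is in place, and your attainability argument for $\lambda<\lambda^\ast$ is essentially the paper's Lemma~\ref{225}. However, the logical flow of your nonexistence argument is misplaced in a way that creates a genuine gap.

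The entire difficulty is proving $\lambda^\ast<\infty$. Once that is known, nonexistence for $\lambda>\lambda^\ast$ is a two-line argument (your own observation: an extremal $u$ at $\lambda_0>\lambda^\ast$ would give $S_\Omega(\lambda,p)\ge S_\Omega^\delta+(\lambda_0-\lambda)\int|u|^p>S_\Omega^\delta$ for $\lambda^\ast<\lambda<\lambda_0$, contradiction). No blow-up is needed there. But you defer the finiteness of $\lambda^\ast$ to ``the blow-up analysis below'' and then set that analysis up on a sequence $\lambda_k\downarrow\lambda^\ast$---which presupposes $\lambda^\ast<\infty$. This is circular. The paper instead assumes, for contradiction, that $S_\Omega(\lambda,p)$ is attained for \emph{every} $\lambda$, takes $\lambda_k\to\infty$, and runs the blow-up on those extremals $u_k$. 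The divergence $\lambda_k\to\infty$ is not cosmetic: it forces $u_k\rightharpoonup 0$ (from $\lambda_k\int|u_k|^p=O(1)$), forces $c_k=\max u_k\to\infty$, and enters the final energy expansion
\[
\|\nabla u_k\|_2^2 \le 1 - \frac{C_1\lambda_k}{\gamma_k^{2+p}} + \frac{C_2}{\gamma_k^4} + o(\gamma_k^{-4}),
\]
where the negative term dominates precisely because $p\le 2$ and $\lambda_k\to\infty$ (for $p=2$ both terms are of order $\gamma_k^{-4}$ and one needs $\lambda_k$ large to win). With $\lambda_k$ bounded near $\lambda^\ast$, as in your setup, this comparison need not yield a contradiction when $p=2$.

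Two further points. First, your claim ``$S_\Omega(\lambda,p)\to-\infty$'' is false; you yourself proved $S_\Omega(\lambda,p)\ge S_\Omega^\delta$ for all $\lambda$. Second, your expansion is too schematic: the paper needs, as an intermediate step, the quantitative control $\lambda_k/\gamma_k^p\to 0$ (obtained from $\gamma_k v_k\to G_{\Omega,x_0}$ in $L^p$ together with $\lambda_k\int v_k^p\to 0$), without which the perturbation term cannot be shown to be lower-order in the pointwise estimates (Lemmas~\ref{52}, \ref{85}, \ref{90}) that feed into the comparison principle. You correctly anticipate that the comparison principle between $v_k$ and the radial solution $V_k$ of the same PDE is the technical crux; what you are missing is that establishing it requires first knowing the perturbation is negligible at the relevant scales, and that in turn requires the $\lambda_k\to\infty$ setup.
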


Our results provide a comprehensive depiction of the impact of \(L^p\)-type perturbations on the existence of extremals for critical Trudinger-Moser inequalities on any bounded planar domains, this can be understood as an extension of the classical Brezis-Nirenberg problem to the two-dimensional case.

Next, we outline the structure of the proof and describe the strategies employed to tackle the technical challenges associated with it. For $p>2$, we prove the existence of extremals by constructing test functions to show that the optimal concentration level of Trudinger-Moser inequalities can be exceeded. For $1\leq p\leq2$, we establish the existence of a threshold for the existence and non-existence of extremals by using refined blow up analysis and a sharp Dirichlet energy expansion formula as in \cite{Man2,Thi}. This case is more challenging than unit disc setting due to  the loss of radial symmetry. Assume no such threshold exists,  we first show that for any sequence $\lambda_k\rightarrow\infty$,  the corresponding maximizer sequence $u_k$ must  blow up (i.e. $c_k=\max_{x\in\Omega}{u_k(x)}\rightarrow\infty$). Furthermore, we show that the perturbation parameter $\lambda_k$ cannot grow too rapidly relative to  $c_k$: \begin{equation}\label{rela}\frac{\lambda_k}{c^p_k}=o_k(1),\end{equation} as $k\rightarrow\infty$. Rewriting the Dirichlet energy $\|\nabla u_k\|_{L^2}$ as the sum of Lebesgue and exponential integrals, we try to derive  a contradiction with $\|\nabla u_k\|_{L^2}=1$  by estimating both integrals separately as $k\rightarrow\infty$.  The estimate of the Lebesgue integral can be easily obtained via the convergence of $\{c_ku_k\}$.  For the integral of the exponential term, we split the domain $\Omega$ into a small ball $B_{r_{k,\delta}}(x_k)$ (see (\ref{55})) and its exterior $\Omega\setminus B_{r_{k,\delta}}(x_k)$. The exterior integral can be easily derived by the truncated technique. While estimating the small ball integral, we face two core difficulties. The first is the lack of radial symmetry. This difficulty is addressed by establishing a key tool--the comparison principle (Lemma \ref{507}).  This principle indicates that within $B_{r_{k,\delta}}(x_k) $, $\{u_k\}$ can be approximated by some radially symmetric solutions of the same equation as $ u_k $. This allows us to transfer the favorable asymptotic behavior of these radial functions (which can be obtained via techniques in \cite{Malchiodi}) to $\{u_k\}$ in $B_{r_{k,\delta}}(x_k)$. Second, the Lebesgue perturbation destroys both the non-negativity and monotonicity of the nonlinear terms in the Euler-Lagrange equations. Consequently, key tools--including the maximum principle and elliptic regularity theory for point-wise estimates of $\Delta u_{k}$ and $ u_{k}$ (e.g., Lemmas \ref{90}, \ref{85}) and level estimates of $u_{k}$ in $B_{r_{k,\delta}}(x_k)$ (Lemma \ref{90}))-cannot be applied as in \cite{Man2,Dru} directly. These estimates are crucial for the comparison principle and for controlling the Lebesgue perturbation term. To address this challenge, we introduce a smaller (perhaps) ball \(B_{R_k}(x_k)\) (see (\ref{213})) where the mean value of \(u_k\) on spheres is close to that of the radially symmetric solutions.    Through intricate analysis, we demonstrate that the error function between \(u_k\) and the radially symmetric solutions can be approximated by certain harmonic functions within this ball. By exploiting this property, we manage to achieve \(R_k = r_{k,\delta}\), and obtain the required point-wise and level set estimates.

This paper is organized as follows. Section 2 presents fundamental definitions, preliminary results, and establishes the existence of extremal functions for $p>2$  via a test function argument. In Section 3, we analyze the existence/non-existence of extremals for the perturbed Trudinger-Moser inequalities when  $1\leq p\leq2$. This includes:(i) Limiting behavior of maximizers near and far from blow-up points; (ii) Perturbation parameter estimates and the estimates of Lebesgue integral; (iii) Point-wise estimates of $\Delta u_k$ and $u_k$, level estimates of $u_k$ in $B_{r_{k,\delta}}(x_k)$, and higher-order blow-up analysis for certain radial functions. Finally, we establish the key tool-comparison principle and exponential integral estimates, derive a sharp Dirichlet energy expansion formula, and complete the proof of Theorem \ref{th1.1}. For the convenience of the reader, the analysis for the limiting behavior of maximizers far from blow-up points are arranged in the Appendix.

 Throughout the paper, $C$ denotes a nonnegative general constant
which may vary from line to line.

\section{Preliminary results and the attainability of perturbed
Trudinger-Moser inequalities for $p>2$}

In this section, we first introduce some basic definitions, lemmas and propositions, which will be useful to discuss the existence and non-existence of
extremals of perturbed Trudinger-Moser inequalities on any bounded domain. Then we will =show that extremal functions always exist by a test function argument in the case $p>2$.

 We recall that $\left\{  u_{k}\right\}  _{k}\subset H_{0}^{1}(\Omega)$ is called a
normalized concentrating sequence ((NCS) in short) at $y_{0}$ if it satisfies
\[
\int_{\Omega}|\nabla u_{k}|^{2}dx=1,\quad\lim_{k\rightarrow\infty}\int
_{\Omega\backslash B_{\varepsilon}(y_{0})}|\nabla u_{k}|^{2}dx=0\quad\text{for
any }\varepsilon>0.
\]
Note that if $\left\{  u_{k}\right\}  _{k}$ is a (NCS), then $u_{k}%
\rightharpoonup0$ weakly in $H_{0}^{1}(\Omega)$, and $u_{k}\rightarrow0$
strongly in $L^{p}(\Omega)$ for any $p\in\left[  1,+\infty\right)  $ by the
Sobolev compact imbedding.

\begin{proposition}
[\cite{Flu}]\label{lem2.2} If $\left\{  u_{k}\right\}  _{k}$ is a (NCS) at
$\ x\in\overline{\Omega}$, then
\[
\underset{k\rightarrow\infty}{\lim\sup}\int_{\Omega}{{e^{{4\pi}u_{k}^{2}}}%
}dx\leq\left\vert \Omega\right\vert +\pi er_{\Omega}^{2}\left(  x\right)  ,
\]
where ${r_{\Omega}}\left(  x\right)  $ is the harmonic radius at $x$ defined
as
\[
{r_{\Omega}}\left(  x\right)  :=\exp\left(  {-2\pi}\tau_{\Omega}\left(
x\right)  \right)  \text{,}%
\]
and $\tau_{\Omega}\left(  x\right)  $ is the Robin function of Laplacian
operator at $x$ with the Dirichlet boundary condition defined as follows.
\end{proposition}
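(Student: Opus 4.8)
The plan is to reduce the assertion to the two-dimensional Carleson--Chang estimate recalled in the Introduction --- namely, for a normalized concentrating sequence $\{v_k\}$ at $0$ in the unit disk one has $\limsup_k\int_{B_1}(e^{4\pi v_k^2}-1)\,dy\le \pi e$ --- by exploiting the conformal invariance of the Dirichlet integral in dimension two: under a conformal change of variables $\int|\nabla u_k|^2$ is unchanged while $dx$ acquires the conformal Jacobian, and the value of this Jacobian at the concentration point is exactly $r_\Omega^2(x)$.

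First I would localize. Fix $\varepsilon>0$ and write $\int_\Omega e^{4\pi u_k^2}\,dx=|\Omega|+\int_{B_\varepsilon(x)\cap\Omega}(e^{4\pi u_k^2}-1)\,dx+\int_{\Omega\setminus B_\varepsilon(x)}(e^{4\pi u_k^2}-1)\,dx$. On the exterior piece the (NCS) hypothesis gives $\|\nabla u_k\|_{L^2(\Omega\setminus B_{\varepsilon/2}(x))}\to 0$; since moreover $u_k\to 0$ in every $L^q(\Omega)$, the subcritical Trudinger--Moser inequality on $\Omega\setminus B_{\varepsilon/2}(x)$ shows that this term, and indeed the mass of $e^{4\pi u_k^2}-1$ outside every fixed ball about $x$, tends to $0$. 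Hence it remains to prove $\limsup_{\varepsilon\to 0}\limsup_{k\to\infty}\int_{B_\varepsilon(x)\cap\Omega}(e^{4\pi u_k^2}-1)\,dx\le \pi e\,r_\Omega^2(x)$, and we may freely use that $e^{4\pi u_k^2}-1$ concentrates at $x$.

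Next I would transplant conformally. Assume first $x\in\Omega$ with $\Omega$ simply connected, and let $\Psi\colon B_1\to\Omega$ be a Riemann map with $\Psi(0)=x$, so that $|\Psi'(0)|$ is the conformal radius of $\Omega$ at $x$, which coincides with the harmonic radius $r_\Omega(x)=e^{-2\pi\tau_\Omega(x)}$. Putting $v_k:=u_k\circ\Psi\in H_0^1(B_1)$, conformal invariance gives $\int_{B_1}|\nabla v_k|^2=1$ and makes $\{v_k\}$ a (NCS) at $0$, while changing variables gives $\int_\Omega e^{4\pi u_k^2}\,dx=|\Omega|+\int_{B_1}(e^{4\pi v_k^2}-1)|\Psi'|^2\,dy$. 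Since $e^{4\pi v_k^2}-1$ concentrates at $0$ and $|\Psi'|^2$ is continuous with $|\Psi'(0)|^2=r_\Omega^2(x)$, the $\limsup_k$ of the last integral equals $r_\Omega^2(x)\,\limsup_k\int_{B_1}(e^{4\pi v_k^2}-1)\,dy$, and applying the Carleson--Chang estimate to $\{v_k\}$ yields the bound $|\Omega|+\pi e\,r_\Omega^2(x)$. The interior case of a multiply connected $\Omega$ is identical once the Riemann map is replaced by Flucher's harmonic transplantation built from the level sets of $G_\Omega(x,\cdot)$, which produces $v_k$ on $B_1$ with $\|\nabla v_k\|_{L^2(B_1)}\le\|\nabla u_k\|_{L^2(\Omega)}$ and the same measure distortion $r_\Omega^2(x)$ at the peak; and for $x\in\partial\Omega$ one has $r_\Omega(x)=0$ and the weaker bound $|\Omega|$, which follows by comparison with a disk since the boundary truncates the concentrating bubble.

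The only genuinely nontrivial ingredient is the Carleson--Chang estimate itself; everything else above is soft. Its proof requires first Schwarz-symmetrizing $v_k$ --- which does not increase the Dirichlet norm (P\'olya--Szeg\H{o}) and leaves $\int_{B_1}e^{4\pi v_k^2}\,dy$ unchanged --- thereby reducing to radially nonincreasing profiles, and then the Carleson--Chang ODE analysis: the substitution $r=e^{-t/2}$ recasts the radial problem as the estimate of $\pi\int_0^\infty e^{\varphi_k(t)^2-t}\,dt$, and the sharp excess $\pi e$ emerges from quantifying how much of the unit energy a concentrating profile must expend away from its maximum. The other point demanding care is the concentration--compactness bookkeeping in the first two steps --- that $e^{4\pi v_k^2}-1$ deposits all of its mass at the single point $0$ --- which follows by combining the (NCS) property with the subcritical Moser inequality on annuli.
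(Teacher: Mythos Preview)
The paper does not supply its own proof of this proposition; it is quoted directly from Flucher \cite{Flu} and used as a black box throughout. Your outline is essentially Flucher's original argument: localize via the (NCS) property, transplant conformally to the unit disk (Riemann map in the simply connected case, Green-function based harmonic transplantation in general) so that the Jacobian at the concentration point contributes exactly $r_\Omega^2(x)$, and then invoke the Carleson--Chang concentration bound $\limsup_k\int_{B_1}(e^{4\pi v_k^2}-1)\,dy\le\pi e$ on the disk. That is the right architecture, and the identification of the conformal/harmonic radius as the factor multiplying $\pi e$ is the key geometric point.

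One small cautionary remark on the localization step: to make the exterior integral $\int_{\Omega\setminus B_\varepsilon(x)}(e^{4\pi u_k^2}-1)\,dx\to 0$ rigorous you cannot simply apply Trudinger--Moser on $\Omega\setminus B_{\varepsilon/2}(x)$ to $u_k$, since $u_k$ need not vanish on $\partial B_{\varepsilon/2}(x)$; the standard fix is to multiply by a cutoff and use that $u_k\to 0$ in every $L^q$ to absorb the commutator terms, exactly as you hint. Also note that the paper's Remark~\ref{re2.2} contains a typo (it should read $\tau_\Omega(x)=+\infty$, equivalently $r_\Omega(x)=0$, for $x\in\partial\Omega$); your statement ``$r_\Omega(x)=0$'' is the correct one.
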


\begin{definition}
[Robin function]\label{rem1}The Robin function $\tau_{\Omega}\left(  x\right)
$ is the trace of the regular part $H_{\Omega,x}\left(  y\right)  $ of the
Green function $G_{\Omega,x}\left(  y\right)  $ of Laplacian operator with
singularity at $x$, which can be expressed in the following form
\[
{G_{\Omega,x}}\left(  y\right)  =-\frac{1}{{2\pi}}\log\left\vert
{x-y}\right\vert -{H_{\Omega,x}}\left(  y\right)  \text{.}%
\]
 The minimum point of the Robin function or maximum point of the harmonic
radius is called a harmonic center of $\Omega$.
\end{definition}

\begin{remark}\label{re2.2}
\ If $x\in\partial\Omega$, then $\tau_{\Omega}\left(  x\right)  =0$, hence
\[
\underset{k\rightarrow\infty}{\lim\sup}\int_{\Omega}{{e^{{4\pi}u_{k}^{2}}}%
}dx\leq\left\vert \Omega\right\vert .
\]

\end{remark}

Let $S_{\Omega}^{\delta}$ be the optimal upper bound of the Trudinger-Moser
functional in (\ref{1}) over all the (NCS), from Proposition \ref{lem2.2}, we
have
\begin{equation}\label{addcon}
S_{\Omega}^{\delta}=\left\vert \Omega\right\vert +\pi e\sup_{x\in\Omega
}r_{\Omega}^{2}\left(  x\right)  .
\end{equation}
The next lemma follows from the definition of $S_{\Omega}\left(
\lambda,p\right)  $ and Proposition \ref{lem2.2}.

\begin{lemma}
\label{224} The supremum $S_{\Omega}\left(  \lambda,p\right)  $ is
non-increasing with respect to $\lambda$ and
\begin{equation}
S_{\Omega}\left(  \lambda,p\right)  \geq S_{\Omega}^{\delta} \label{lag}%
\end{equation}
for any $\lambda$ and $p$. Moreover, we have%
\begin{equation}\label{qq}
\underset{\lambda\rightarrow0^{+}}{\lim}S_{\Omega}\left(  \lambda,p\right)
=S_{\Omega}\left(  0,p\right)  \text{.} %
\end{equation}

\end{lemma}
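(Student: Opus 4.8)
The plan is to prove Lemma \ref{224} in three parts, each following fairly directly from the definitions established above.

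\textbf{Monotonicity in $\lambda$.} First I would observe that for fixed $p$ and for $\lambda_1 < \lambda_2$, the integrand satisfies
\[
e^{4\pi u^2} - 1 - \lambda_1 |u|^p \;\geq\; e^{4\pi u^2} - 1 - \lambda_2 |u|^p
\]
pointwise for every admissible $u$ (since $|u|^p \geq 0$ and $\lambda_2 - \lambda_1 > 0$). Integrating over $\Omega$ and taking the supremum over the same admissible set $\{u \in H_0^1(\Omega) : \|\nabla u\|_{L^2(\Omega)}^2 \leq 1\}$ gives $S_\Omega(\lambda_1, p) \geq S_\Omega(\lambda_2, p)$, which is the claimed non-increasing property. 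Note this uses nothing beyond positivity of $|u|^p$.

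\textbf{The lower bound $S_\Omega(\lambda,p) \geq S_\Omega^\delta$.} Here I would use a (NCS) $\{u_k\}$ as a family of test functions. Since $u_k \to 0$ strongly in $L^p(\Omega)$, we have $\lambda \int_\Omega |u_k|^p\, dx \to 0$ for any fixed $\lambda$, and also $|\Omega|$ is subtracted off by the ``$-1$'' term. Hence
\[
S_\Omega(\lambda,p) \;\geq\; \limsup_{k\to\infty} \int_\Omega \left( e^{4\pi u_k^2} - 1 - \lambda|u_k|^p \right) dx \;=\; \limsup_{k\to\infty} \int_\Omega e^{4\pi u_k^2}\, dx - |\Omega|.
\]
Taking the supremum over all (NCS) concentrating at points $x \in \overline{\Omega}$ and invoking Proposition \ref{lem2.2} together with the definition (\ref{addcon}) of $S_\Omega^\delta$ — in particular choosing (NCS) whose concentration integral approaches $|\Omega| + \pi e\, r_\Omega^2(x)$ and then optimizing over $x$ — yields $S_\Omega(\lambda,p) \geq S_\Omega^\delta$. (The existence of (NCS) realizing the Carleson-Chang/harmonic-radius bound is standard via the Moser-type test functions transplanted to the harmonic center; this is exactly the content that makes $S_\Omega^\delta$ the \emph{optimal} concentration level.)

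\textbf{Continuity at $\lambda = 0^+$.} By the monotonicity just proved, $S_\Omega(\lambda,p)$ is non-increasing as $\lambda \downarrow 0^+$, so $\lim_{\lambda\to 0^+} S_\Omega(\lambda,p)$ exists and is $\leq S_\Omega(0,p)$. For the reverse inequality, fix $\varepsilon > 0$ and pick an admissible $u_\varepsilon$ with $\int_\Omega (e^{4\pi u_\varepsilon^2} - 1)\, dx \geq S_\Omega(0,p) - \varepsilon$; this is possible since $S_\Omega(0,p)$ is finite by Moser's inequality (\ref{1}). Then for all $\lambda > 0$,
\[
S_\Omega(\lambda,p) \;\geq\; \int_\Omega \left(e^{4\pi u_\varepsilon^2} - 1 - \lambda |u_\varepsilon|^p\right) dx \;\geq\; S_\Omega(0,p) - \varepsilon - \lambda \|u_\varepsilon\|_{L^p(\Omega)}^p,
\]
and letting $\lambda \to 0^+$ gives $\liminf_{\lambda\to 0^+} S_\Omega(\lambda,p) \geq S_\Omega(0,p) - \varepsilon$. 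Since $\varepsilon$ is arbitrary, $\lim_{\lambda\to 0^+} S_\Omega(\lambda,p) = S_\Omega(0,p)$.

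I do not anticipate a genuine obstacle here; the only point requiring a little care is making sure that the supremum defining $S_\Omega^\delta$ in (\ref{addcon}) is actually \emph{achieved in the limit} by some (NCS), i.e. that the bound in Proposition \ref{lem2.2} is sharp over the class of (NCS). This is the standard construction of concentrating test functions centered at a harmonic center of $\Omega$, and it is precisely what justifies calling $S_\Omega^\delta$ the optimal concentration level; I would cite \cite{Flu,Car} for it rather than reproduce the computation.
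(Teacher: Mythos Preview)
Your proposal is correct and follows essentially the same approach as the paper. Two minor remarks: (i) the paper's own proofs use the functional $\int_\Omega(e^{4\pi u^2}-\lambda|u|^p)\,dx$ \emph{without} the ``$-1$'' (despite the $-1$ appearing in one definition in the introduction), and with that convention the (NCS) argument gives $S_\Omega(\lambda,p)\geq S_\Omega^\delta$ directly; with the $-1$ your computation as written yields $S_\Omega^\delta-|\Omega|$, so you should drop the $-1$ to match the stated inequality and the definition (\ref{addcon}) of $S_\Omega^\delta$. (ii) For the continuity at $0^+$, the paper invokes an actual extremal $u_0$ of $S_\Omega(0,p)$ (which exists by Carleson--Chang/Flucher), whereas your $\varepsilon$-approximation argument is slightly more elementary since it does not require existence of that extremal; both are valid.
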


\begin{proof}
Choosing $\lambda_{1}$, $\lambda_{2}>0$ such that $\lambda_{1}>\lambda_{2}$,
and assuming that $\{u_{k}\}_{k}\subset H_{0}^{1}\left(  \Omega\right)  $ is
the maximizing sequence of $S_{\Omega}\left(  \lambda_{1},p\right)  $, then%
\[
S_{\Omega}(\lambda_{1},p)=\underset{k\rightarrow\infty}{\lim}\int_{\Omega
}\left(  e^{4\pi u_{k}^{2}}-\lambda_{1}|u_{k}|^{p}\right)  dx\leq
\underset{k\rightarrow\infty}{\lim}\int_{\Omega}\left(  e^{4\pi u_{k}^{2}%
}-\lambda_{2}|u_{k}|^{p}\right)  dx\leq S_{\Omega}\left(  \lambda
_{2},p\right)  \text{.}%
\]
Thus, $S_{\Omega}\left(  \lambda,p\right)  $ is non-increasing with respect to
$\lambda$.

For any $\lambda,p>0$, it is clear that%
\begin{align*}
S_{\Omega}\left(  \lambda,p\right)   &  \geq\underset{\{u_{k}\}\text{:(NCS) }%
}{\sup}\underset{k\rightarrow\infty}{\lim\sup}\int_{\Omega}\left(  e^{4\pi
u_{k}^{2}}-\lambda|u_{k}|^{p}\right)  dx\\
&  =\underset{\{u_{k}\}\text{:(NCS)}}{\sup}\underset{k\rightarrow\infty}%
{\lim\sup}\int_{\Omega}e^{4\pi u_{k}^{2}}dx=S_{\Omega}^{\delta},
\end{align*}
hence (\ref{lag}) is proved.

Next, we prove (\ref{qq}). Observe that $S_{\Omega}\left(  \lambda,p\right)
\leq$ $S_{\Omega}\left(  0,p\right)  $ for $\lambda>0$, hence%
\[
\underset{\lambda\rightarrow0^{+}}{\lim}S_{\Omega}\left(  \lambda,p\right)
\leq S_{\Omega}\left(  0,p\right)  \text{.}%
\]
On the other hand, let $u_{0}\in H_{0}^{1}\left(  \Omega\right)  $ be an
extremal function of $S_{\Omega}\left(  0,p\right)  $, then%
\[
\underset{\lambda\rightarrow0^{+}}{\lim}S_{\Omega}\left(  \lambda,p\right)
\geq\underset{\lambda\rightarrow0^{+}}{\lim}\int_{\Omega}\left(  e^{4\pi
u_{0}^{2}}-\lambda\left\vert u_{0}\right\vert ^{p}\right)  dx=\int_{\Omega
}e^{4\pi u_{0}^{2}}dx=S_{\Omega}\left(  0,p\right)  \text{.}%
\]
\ Therefore, (\ref{qq}) holds true, and the proof is finished.
\end{proof}

\begin{lemma}
\medskip\label{225} If $S_{\Omega}\left(  \lambda^{\ast},p\right)  =S_{\Omega
}^{\delta}$ holds with some $\lambda^{\ast}>0$, then $S_{\Omega}\left(
\lambda,p\right)  $ is not attained for any $\lambda>\lambda^{\ast}$. If
$S_{\Omega}\left(  \lambda,p\right)  >S_{\Omega}^{\delta}$, then $S_{\Omega
}\left(  \lambda,p\right)  $ is attained by some $u_{\lambda}\in H_{0}%
^{1}\left(  \Omega\right)  $.
\end{lemma}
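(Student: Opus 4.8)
The plan is to prove the two assertions separately. The first is elementary. Suppose $S_{\Omega}(\lambda^{\ast},p)=S_{\Omega}^{\delta}$ and, for contradiction, that $S_{\Omega}(\lambda,p)$ is attained by some $u_{\lambda}\in H_{0}^{1}(\Omega)$ with $\|\nabla u_{\lambda}\|_{L^{2}(\Omega)}\le 1$ for some $\lambda>\lambda^{\ast}$. Since $S_{\Omega}^{\delta}=|\Omega|+\pi e\sup_{x\in\Omega}r_{\Omega}^{2}(x)>|\Omega|$ (the harmonic radius being positive at interior points of $\Omega$), the bound \eqref{lag} gives $S_{\Omega}(\lambda,p)>|\Omega|$, hence $u_{\lambda}\not\equiv 0$ and $\int_{\Omega}|u_{\lambda}|^{p}dx>0$. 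Testing the definition of $S_{\Omega}(\lambda^{\ast},p)$ against this same $u_{\lambda}$ yields
\[
S_{\Omega}(\lambda^{\ast},p)\ge\int_{\Omega}\bigl(e^{4\pi u_{\lambda}^{2}}-\lambda^{\ast}|u_{\lambda}|^{p}\bigr)dx=S_{\Omega}(\lambda,p)+(\lambda-\lambda^{\ast})\int_{\Omega}|u_{\lambda}|^{p}dx\ge S_{\Omega}^{\delta}+(\lambda-\lambda^{\ast})\int_{\Omega}|u_{\lambda}|^{p}dx>S_{\Omega}^{\delta},
\]
contradicting $S_{\Omega}(\lambda^{\ast},p)=S_{\Omega}^{\delta}$. (Using Lemma \ref{224} and \eqref{lag} one also sees that $S_{\Omega}(\lambda,p)=S_{\Omega}^{\delta}$ for every $\lambda\ge\lambda^{\ast}$, as in Theorem \ref{th1.1}, though this is not needed here.)

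For the second assertion, assume $S_{\Omega}(\lambda,p)>S_{\Omega}^{\delta}$ and choose a maximizing sequence $\{u_{k}\}\subset H_{0}^{1}(\Omega)$ with $\|\nabla u_{k}\|_{L^{2}(\Omega)}\le 1$; replacing $u_{k}$ by $|u_{k}|$ we may assume $u_{k}\ge 0$, and passing to a subsequence we get $u_{k}\rightharpoonup u$ weakly in $H_{0}^{1}(\Omega)$, $u_{k}\to u$ in $L^{q}(\Omega)$ for every $q\in[1,\infty)$, and $u_{k}\to u$ a.e. The decisive step is to exclude vanishing, i.e. to show $u\not\equiv 0$. If $u\equiv 0$, then $\int_{\Omega}|u_{k}|^{p}dx\to 0$, so $\int_{\Omega}e^{4\pi u_{k}^{2}}dx\to S_{\Omega}(\lambda,p)>S_{\Omega}^{\delta}\ge|\Omega|=\int_{\Omega}e^{4\pi u^{2}}dx$; since the exponential integral does not converge to the integral of the weak limit, the standard concentration dichotomy for the Trudinger-Moser functional (see \cite{Car,Flu,Lin}) forces $|\nabla u_{k}|^{2}dx\rightharpoonup\delta_{x_{0}}$ for some $x_{0}\in\overline{\Omega}$; in particular $\|\nabla u_{k}\|_{L^{2}(\Omega)}\to 1$ and $\int_{\Omega\setminus B_{\varepsilon}(x_{0})}|\nabla u_{k}|^{2}dx\to 0$ for every $\varepsilon>0$, so after the harmless renormalization $u_{k}/\|\nabla u_{k}\|_{L^{2}(\Omega)}$ the sequence is a normalized concentrating sequence at $x_{0}$. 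Proposition \ref{lem2.2} then gives $\limsup_{k\to\infty}\int_{\Omega}e^{4\pi u_{k}^{2}}dx\le|\Omega|+\pi e\,r_{\Omega}^{2}(x_{0})\le S_{\Omega}^{\delta}$, contradicting $S_{\Omega}(\lambda,p)>S_{\Omega}^{\delta}$. Hence $u\not\equiv 0$.

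Once $u\not\equiv 0$, compactness is routine. From $\|\nabla u\|_{L^{2}(\Omega)}\le\liminf_{k}\|\nabla u_{k}\|_{L^{2}(\Omega)}\le 1$ and weak convergence we get $\limsup_{k}\|\nabla(u_{k}-u)\|_{L^{2}(\Omega)}^{2}\le 1-\|\nabla u\|_{L^{2}(\Omega)}^{2}<1$, so by the Lions concentration-compactness lemma $\{e^{4\pi u_{k}^{2}}\}$ is bounded in $L^{r}(\Omega)$ for some $r>1$; together with $u_{k}\to u$ a.e. this upgrades to $e^{4\pi u_{k}^{2}}\to e^{4\pi u^{2}}$ in $L^{1}(\Omega)$. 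Combining with $\int_{\Omega}|u_{k}|^{p}dx\to\int_{\Omega}|u|^{p}dx$ we obtain $\int_{\Omega}(e^{4\pi u^{2}}-\lambda|u|^{p})dx=S_{\Omega}(\lambda,p)$ with $\|\nabla u\|_{L^{2}(\Omega)}\le 1$, so $u_{\lambda}:=u$ attains $S_{\Omega}(\lambda,p)$.

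The main obstacle is the exclusion of vanishing in the second part: one needs the quantitative fact that a maximizing sequence converging weakly to zero whose exponential integral does not vanish must concentrate at a single point and hence be (asymptotically) a normalized concentrating sequence, so that the sharp bound of Proposition \ref{lem2.2} becomes available --- this is precisely where the hypothesis $S_{\Omega}(\lambda,p)>S_{\Omega}^{\delta}$ is used. The other ingredients (monotonicity of $S_{\Omega}(\cdot,p)$, the strict inequality $S_{\Omega}^{\delta}>|\Omega|$, and the Lions-type uniform integrability of $e^{4\pi u_{k}^{2}}$ when the weak limit is nontrivial) are standard.
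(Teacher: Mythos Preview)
Your proof is correct and follows essentially the same strategy as the paper's. For the first assertion both argue by contradiction, comparing the functional at $u_{\lambda}$ for the two parameter values; you are simply more explicit in justifying $u_{\lambda}\not\equiv 0$. For the second assertion the paper invokes the Lions concentration--compactness dichotomy in one line (``not a (NCS)'' plus Lions $\Rightarrow$ convergence), whereas you unpack this into two steps: first exclude $u\equiv 0$ by showing it would force single-point concentration and hence the $S_{\Omega}^{\delta}$ bound via Proposition~\ref{lem2.2}, then use $u\not\equiv 0$ to obtain $L^{r}$-boundedness of $e^{4\pi u_{k}^{2}}$ and pass to the limit. The content is the same; your version is just a more detailed reading of the dichotomy, and your remark that the renormalization $u_{k}/\|\nabla u_{k}\|_{L^{2}}$ is harmless (since $\|\nabla u_{k}\|_{L^{2}}\to 1$ and $u_{k}^{2}\le (u_{k}/\|\nabla u_{k}\|_{L^{2}})^{2}$) correctly handles a point the paper leaves implicit.
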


\begin{proof}
We prove the first conclusion by\ contradiction. If $S_{\Omega}\left(
\lambda,p\right)  $ is attained by some $u_{\lambda}\in$\ $H_{0}^{1}\left(
\Omega\right)  $ for $\lambda>\lambda^{\ast}$, then%
\begin{align*}
S_{\Omega}\left(  \lambda,p\right)   &  =\int_{\Omega}\left(  e^{4\pi
u_{\lambda}^{2}}-\lambda\left\vert u_{\lambda}\right\vert ^{p}\right)  dx\\
&  <\int_{\Omega}\left(  e^{4\pi u_{\lambda}^{2}}-\lambda^{\ast}\left\vert
u_{\lambda}\right\vert ^{p}\right)  dx\leq S_{\Omega}\left(  \lambda^{\ast
},p\right)  =S_{\Omega}^{\delta}\text{,}%
\end{align*}
which contradicts to Lemma \ref{224}.

Next, we prove the second conclusion. Assume that $\{u_{k}\}_{k}\subset
H_{0}^{1}\left(  \Omega\right)  $ is a maximizing sequence of $S_{\Omega
}\left(  \lambda,p\right)  $, namely,%
\[
\left\vert \left\vert \nabla u_{k}\right\vert \right\vert _{L^{2}\left(
\Omega\right)  }\leq1\text{ \ and }\underset{k\rightarrow\infty}{\lim}%
\int_{\Omega}\left(  e^{4\pi u_{k}^{2}}-\lambda\left\vert u_{k}\right\vert
^{p}\right)  dx=S_{\Omega}\left(  \lambda,p\right)  \text{.}%
\]
Then up to a subsequence, there exists some $u_{\lambda}\in H_{0}^{1}\left(
\Omega\right)  $ such that $u_{k}\rightharpoonup u_{\lambda}$ in $H_{0}%
^{1}\left(  \Omega\right)  $ with $\left\vert \left\vert \nabla u_{\lambda
}\right\vert \right\vert _{L^{2}\left(  \Omega\right)  }\leq1$. From the
assumption $S_{\Omega}\left(  \lambda,p\right)  >S_{\Omega}^{\delta}$, we see
that $\{u_{k}\}_{k}$ is not a (NCS) by Proposition \ref{lem2.2}. Using the
Sobolev compact embedding and Concentration-compactness principle associated with
Trudinger-Moser inequalities in \cite{Lions}, we have%
\[
\underset{k\rightarrow\infty}{\lim}\int_{\Omega}\left(  e^{4\pi u_{k}^{2}%
}-\lambda\left\vert u_{k}\right\vert ^{p}\right)  dx=\int_{\Omega}\left(
e^{4\pi u_{\lambda}^{2}}-\lambda\left\vert u_{\lambda}\right\vert ^{p}\right)
dx\text{,}%
\]
and then $S_{\Omega}\left(  \lambda,p\right)  $ is attained. Thus, we finish
the proof.
\end{proof}

Indeed, we can show that if $S_{\Omega}\left(  \lambda,p\right)  $ is attained
by some $u_{\lambda}\in H_{0}^{1}\left(  \Omega\right)  $, then $\left\vert
\left\vert \nabla u_{\lambda}\right\vert \right\vert _{L^{2}\left(
\Omega\right)  }=1$. This is included in the following result.

\begin{lemma}
\label{lem2.1} Let $t\in(0,1)$. Then
\[
\sup_{u\in H_{0}^{1}(\Omega),\Vert\nabla u\Vert_{L^{2}\left(  \Omega\right)
}\leq t}\int_{\Omega}\left(  e^{{4\pi}u^{2}}-\lambda|u|^{p}\right)
dx<S_{\Omega}\left(  \lambda,p\right)  .
\]

\end{lemma}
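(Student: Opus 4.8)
The statement to prove is Lemma~\ref{lem2.1}: for $t\in(0,1)$, the supremum of $\int_\Omega(e^{4\pi u^2}-\lambda|u|^p)\,dx$ over the smaller ball $\{\|\nabla u\|_{L^2}\le t\}$ is strictly less than $S_\Omega(\lambda,p)$. The natural strategy is a scaling argument: given any admissible $u$ with $\|\nabla u\|_{L^2}\le t<1$, I would compare $u$ with the dilated function $v=u/t$, which satisfies $\|\nabla v\|_{L^2}\le 1$ and is therefore admissible for $S_\Omega(\lambda,p)$. Since $|v|\ge|u|$ pointwise (as $1/t>1$), we have $e^{4\pi v^2}\ge e^{4\pi u^2}$ and $-\lambda|v|^p\le -\lambda|u|^p$ when $\lambda\ge 0$, so this crude comparison does not immediately work when the perturbation is present. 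The fix is to gain a definite amount from the exponential term that strictly dominates the (bounded) loss from the $L^p$ term, using that the functional on the $t$-ball is itself bounded.

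First I would record that $M_t:=\sup_{\|\nabla u\|_{L^2}\le t}\int_\Omega(e^{4\pi u^2}-\lambda|u|^p)\,dx$ is finite (by the Trudinger-Moser inequality \eqref{1}, since $t<1$ leaves room below the critical exponent, and $|u|^p$ is controlled), and that along a maximizing sequence for $M_t$ one may pass to a weak limit $u_*\in H_0^1(\Omega)$ with $\|\nabla u_*\|_{L^2}\le t$; because $\|\nabla u_*\|_{L^2}^2\le t^2<1$ the sequence is subcritical, so by the Lions-type concentration-compactness argument already invoked in the proof of Lemma~\ref{225} (and the Sobolev compactness of the $L^p$ term) the supremum $M_t$ is attained at some $u_*$. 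If $u_*\equiv 0$ then $M_t=|\Omega|<S_\Omega^\delta\le S_\Omega(\lambda,p)$ by Lemma~\ref{224} and we are done, so assume $u_*\not\equiv 0$ and set $\sigma:=\|\nabla u_*\|_{L^2}\le t$.

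Now consider $w:=u_*/\sigma$ (if $\sigma<t$, even $w:=u_*/t$ would do, but $\sigma$ is the efficient choice), so $\|\nabla w\|_{L^2}=1$ and $w$ is admissible for $S_\Omega(\lambda,p)$. Since $1/\sigma\ge 1/t>1$, on the set $\{u_*\ne 0\}$ (which has positive measure) we have $w^2>u_*^2$ strictly, hence $e^{4\pi w^2}-e^{4\pi u_*^2}>0$ on a set of positive measure, and $\int_\Omega(e^{4\pi w^2}-e^{4\pi u_*^2})\,dx=:\delta_0>0$. For the perturbation term, $|w|^p\ge|u_*|^p$ only increases the subtracted quantity; to control this I would instead dilate by a parameter $s\in(t,1]$ chosen just above $t$: with $w_s:=u_*\,(s/\sigma)$ one has $\|\nabla w_s\|_{L^2}=s\le 1$, and
\[
\int_\Omega\!\left(e^{4\pi w_s^2}-\lambda|w_s|^p\right)dx
=\int_\Omega\!\left(e^{4\pi (s/\sigma)^2 u_*^2}-\lambda (s/\sigma)^p|u_*|^p\right)dx.
\]
As $s\downarrow \sigma$ the right-hand side tends to $\int_\Omega(e^{4\pi u_*^2}-\lambda|u_*|^p)\,dx=M_t$, and its derivative in $s$ at $s=\sigma^+$ is
\[
\frac{2}{\sigma}\int_\Omega 4\pi u_*^2\,e^{4\pi u_*^2}\,dx
\;-\;\frac{p\lambda}{\sigma}\int_\Omega |u_*|^p\,dx,
\]
which is \emph{strictly positive} because the first integral is bounded below by a positive multiple of $\|u_*\|_{L^2}^2$ (the factor $u_*^2e^{4\pi u_*^2}\ge u_*^2$) times a constant that one can make dominate the finite quantity $p\lambda\|u_*\|_{L^p}^p$ — and if it is not, one simply takes a slightly larger $s$ where monotonicity of $e^{4\pi r}$ still wins, since $e^{4\pi(s/\sigma)^2u_*^2}$ grows super-linearly in $(s/\sigma)^2$ while $(s/\sigma)^p|u_*|^p$ grows only polynomially. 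Hence for some $s\in(t,1]$ we get $\int_\Omega(e^{4\pi w_s^2}-\lambda|w_s|^p)\,dx>M_t$, and since $w_s$ is admissible for $S_\Omega(\lambda,p)$ this gives $S_\Omega(\lambda,p)>M_t$, which is the claim.

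The main obstacle is the sign issue created by the $L^p$ perturbation: unlike the unperturbed Trudinger-Moser functional, scaling up $u$ does not obviously increase the functional, so one must extract a quantitative gain from the exponential term. The cleanest route is the one above — attain $M_t$ first (legitimate because $t<1$ forces the subcritical regime), then compute the one-sided derivative of the rescaled functional and observe that the exponential contribution $\int 4\pi u_*^2 e^{4\pi u_*^2}$ strictly outweighs the polynomial contribution $p\lambda\int|u_*|^p$; if the reader prefers to avoid calculus one can instead note directly that replacing $u_*$ by $u_*/\sigma$ multiplies the exponent by $1/\sigma^2>1$, so $e^{4\pi u_*^2/\sigma^2}\ge (e^{4\pi u_*^2})^{1/\sigma^2}\ge e^{4\pi u_*^2}+c\,u_*^2$ pointwise for a suitable $c=c(\sigma)>0$, and then choose to first truncate and renormalize so that the $L^p$ loss is an arbitrarily small fraction of this gain. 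Either way the point is purely that the exponential nonlinearity is genuinely super-critical relative to the $L^p$ term at any fixed positive energy level, so strictness is robust; the estimates involved are routine once the attainment of $M_t$ is in hand.
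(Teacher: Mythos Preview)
Your proposal has a genuine gap at the decisive step. The scaling family $s\mapsto w_s=(s/\sigma)u_*$ is exactly the one-parameter family the paper uses, and the attainment of $M_t$ at some $u_*\not\equiv 0$ (via subcritical concentration-compactness) is fine. The problem is your claim that the one-sided derivative
\[
\frac{d}{ds}\Big|_{s=\sigma}\int_\Omega\Big(e^{4\pi(s/\sigma)^2u_*^2}-\lambda(s/\sigma)^p|u_*|^p\Big)\,dx
=\frac{1}{\sigma}\left(8\pi\int_\Omega u_*^2e^{4\pi u_*^2}\,dx-p\lambda\int_\Omega|u_*|^p\,dx\right)
\]
is strictly positive. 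These are fixed quantities determined by $u_*$; you cannot ``make'' one dominate the other. Worse, in precisely the situation you must exclude --- namely $M_t=S_\Omega(\lambda,p)$ --- this derivative is exactly zero, because $u_*$ is then also a maximizer on the full unit ball and $s=\sigma$ is an \emph{interior} critical point of your scaling functional. Your fallback (``exponential eventually beats polynomial'') does not rescue this: $s$ ranges only over the bounded interval $[\sigma,1]$, so there is no asymptotic regime to appeal to, and for $p>2$ even the second derivative at $s=\sigma$ can be negative. The truncation/renormalization sketch at the end is too vague to repair the gap.

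The paper supplies the missing idea. Argue by contradiction: if $M_t=S_\Omega(\lambda,p)$, then $u_*$ is simultaneously a constrained maximizer on the sphere $\{\|\nabla u\|_{L^2}=\tilde t\}$ (hence satisfies an Euler--Lagrange equation with a Lagrange multiplier $M$) and an interior maximizer of the scaling family (hence the derivative above vanishes). Testing the Euler--Lagrange equation against $u_*$ and using this vanishing forces $M=0$; the equation then collapses to the pointwise identity
\[
e^{4\pi u_*^2(x)}=\frac{p\lambda}{8\pi}\,|u_*(x)|^{p-2}\quad\text{in }\Omega,
\]
which is impossible near $\partial\Omega$ where $u_*\to 0$. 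It is this combination of the Euler--Lagrange equation with the vanishing multiplier and the boundary condition --- not a growth comparison along the scaling family --- that closes the argument.
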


\begin{proof}
The proof is similar as in \cite[Propisition 2.3]{Has}. For the convenience of
the readers, we give the detailed proof. Set
\[
{S_{\Omega,t}}(\lambda,p)=\sup_{u\in H_{0}^{1}(\Omega),\Vert\nabla
u\Vert_{L^{2}\left(  \Omega\right)  }\leq t}\int_{\Omega}\left(  e^{{4\pi
}u^{2}}-\lambda|u|^{p}\right)  dx.
\]
If the assertion fails, then $S_{\Omega,t}(\lambda,p)=S_{\Omega}\left(
\lambda,p\right)  $ for some $0<t<1$. Taking a maximizing sequence $\left\{
u_{k}\right\}  _{k}$ of $S_{\Omega,t}(\lambda,p)$, then
\[
\left\Vert \nabla u_{k}\right\Vert _{L^{2}\left(  \Omega\right)  }\leq
t,\quad\lim_{k\rightarrow\infty}\int_{\Omega}\left(  e^{{4\pi}u_{k}^{2}%
}-\lambda\left\vert u_{k}\right\vert ^{p}\right)  dx=S_{\Omega,t}%
(\lambda,p)\text{.}%
\]
Up to a subsequence, there exists $u_{\lambda}\in H_{0}^{1}(\Omega)$ such that
$u_{k}\rightharpoonup u_{\lambda}\not \equiv 0$ in $H_{0}^{1}(\Omega)$ and
$\left\Vert \nabla u_{\lambda}\right\Vert _{L^{2}}=\tilde{t}\leq t$. Using the
Concentration-compactness principle associated with Trudinger-Moser
inequalities, we obtain
\[
\int_{\Omega}\left(  e^{{4\pi}u_{\lambda}^{2}}-\lambda\left\vert u_{\lambda
}\right\vert ^{p}\right)  dx=\lim_{k\rightarrow\infty}\int_{\Omega}\left(
e^{{4\pi}u_{k}^{2}}-\lambda\left\vert u_{k}\right\vert ^{p}\right)
dx=S_{\Omega,t}(\lambda,p)\text{,}%
\]
which implies that $u_{\lambda}$ is a maximizer of $S_{\Omega,t}(\lambda,p)$.
It is clear that $u_{\lambda}$ is also a maximizer of
\[
\sup_{u\in H_{0}^{1}(\Omega),\Vert\nabla u\Vert_{L^{2}\left(  \Omega\right)
}=\tilde{t}}\int_{\Omega}\left(  e^{{4\pi}u^{2}}-\lambda|u|^{p}\right)
dx\text{,}%
\]
then there exists some $M$ such that
\begin{equation}
M\int_{\Omega}\nabla u_{\lambda}\nabla\phi dx-\int_{\Omega}\left(  {4\pi
}u_{\lambda}e^{{4\pi}u_{\lambda}^{2}}-\frac{p}{2}\lambda u_{\lambda}%
^{p-1}\right)  \phi dx=0\label{3}%
\end{equation}
for any $\phi\in H_{0}^{1}(\Omega)$. Set
\[
f(s):=\int_{\Omega}\left[  e^{{4\pi}\left(  su_{\lambda}\right)  ^{2}}%
-\lambda\left(  su_{\lambda}\right)  ^{p}\right]  dx
\]
for $s\in\lbrack0,1/\tilde{t}]$. It is easily verify that $\left.  f^{\prime
}(s)\right\vert _{s=1}=0$, then we have
\[
\int_{\Omega}\left(  {8\pi}u_{\lambda}^{2}e^{{4\pi}u_{\lambda}^{2}}-p\lambda
u_{\lambda}^{p}\right)  dx=0\text{.}%
\]
Chose $\phi=u_{\lambda}$ in \eqref{3}, we can obtain $M=0$. Then \eqref{3} can
be rewritten as
\[
\int_{\Omega}{4\pi}u_{\lambda}\left(  e^{{4\pi}u_{\lambda}^{2}}-\frac{p}%
{{8\pi}}\lambda u_{\lambda}^{p-2}\right)  \phi dx=0
\]
for any $\phi\in H_{0}^{1}(\Omega)$. Hence, it holds
\[
e^{{4\pi}u_{\lambda}^{2}(x)}-\frac{p}{{8\pi}}\lambda u_{\lambda}^{p-2}(x)=0
\]
for any $x\in\Omega$. However, this is impossible when $x$ near $\partial
\Omega$ because $\left.  u_{\lambda}\right\vert _{\partial\Omega}$ $=0$.
Consequently, the proof is finished.
\end{proof}

At the end of this section, we show that the perturbed
Trudinger-Moser inequalities can always be attained in the case $p>2$ by the
test function argument.

\begin{proof}
[Proof of Theorem \ref{th1.2}]For any $\varepsilon>0,$ we define the function
$f_{\varepsilon}$ by%
\[
f_{\varepsilon}(t)=%
\begin{cases}
C+C^{-1}\left(  -\frac{1}{4\pi}\ln\left(  1+\pi\varepsilon^{-2}e^{-4\pi
t}\right)  +A\right)  , & \text{{if}}\ t\geq t_{\varepsilon},\\
C^{-1}t, & \text{{if}}\ t<t_{\varepsilon},
\end{cases}
\]
and set%
\[
\phi_{\varepsilon}(x)=f_{\varepsilon}(G_{\Omega,y_{0}}\left(  x\right)
),\text{ }x\in\Omega,
\]
where $t_{\varepsilon}=\frac{1}{2\pi}\ln\frac{1}{R\varepsilon},$
$R=-\ln\varepsilon,$ $y_{0}$ is the harmonic center of $\Omega$, $A$ and $C$
are constants depending only on $\varepsilon$ to be determined such that
$\phi_{\varepsilon}\in H_{0}^{1}(\Omega)$ and $\int_{\Omega}\left\vert
\nabla\phi_{\varepsilon}\right\vert ^{2}dx=1$.

Similar to the discussion in \cite{Yang} (see also \cite{LSXZ}), we can obtain%
\begin{equation}
A=-C^{2}-\frac{1}{2\pi}\ln\varepsilon+\frac{1}{4\pi}\ln\pi+O(R^{-2})
\label{eq5.11}%
\end{equation}
with%
\begin{equation}
C^{2}=-\frac{1}{2\pi}\ln\varepsilon+\frac{1}{4\pi}\ln\pi-\frac{1}{4\pi
}+O(R^{-2}). \label{eq5.13}%
\end{equation}

Using (\ref{eq5.11}) and (\ref{eq5.13}), and by direct\ computation as in
\cite{Yang}, we have%
\[
\int_{\{G<t_{\varepsilon}\}}\left(  e^{4\pi\phi_{\varepsilon}^{2}}-\lambda
\phi_{\varepsilon}^{p}\right)  dx\geq\left\vert \Omega\right\vert +\frac
{4\pi\int_{\Omega}G_{\Omega,y_{0}}^{2}\left(  x\right)  dx}{C^{2}}%
-\frac{\lambda\int_{\Omega}G_{\Omega,y_{0}}^{p}dx}{C^{p}}+O(R^{-2})
\]
and
\[
\int_{\{G>t_{\varepsilon}\}}\left(  e^{4\pi\phi_{\varepsilon}^{2}}-\lambda
\phi_{\varepsilon}^{p}\right)  dx\geq\pi e\sup_{x\in\Omega}r_{\Omega}%
^{2}\left(  x\right)  +O(R^{-2}).
\]
Therefore,
\begin{align}
\int_{\Omega}\left(  e^{4\pi\phi_{\varepsilon}^{2}}-\lambda\phi_{\varepsilon
}^{p}\right)  dx  &  \geq\left\vert \Omega\right\vert +\pi e\sup_{x\in\Omega
}r_{\Omega}^{2}\left(  x\right) \nonumber\\
&  ~~+\frac{4\pi\int_{\Omega}G_{\Omega,y_{0}}^{2}dx}{C^{2}}-\frac{\lambda
\int_{\Omega}G_{\Omega,y_{0}}^{p}dx}{C^{p}}+O(R^{-2}). \label{04}%
\end{align}
Since $C^{2}\sim-\ln\varepsilon=R$ by (\ref{eq5.13}) and $p>2$, we conclude
\[
{S_{\Omega}}(\lambda,p)\geq\int_{\Omega}\left(  e^{4\pi\phi_{\varepsilon}^{2}%
}-\lambda\phi_{\varepsilon}^{p}\right)  dx>\left\vert \Omega\right\vert +\pi
e\sup_{x\in\Omega}r_{\Omega}^{2}\left(  x\right)  =S_{\Omega}^{\delta},
\]
for $\varepsilon>0$ small enough, hence ${S_{\Omega}}(\lambda,p)$ is attained
for any $\lambda\in%
\mathbb{R}
$ by Lemma \ref{225}.
\end{proof}

\begin{remark}\label{small}
When $p=2$, from (\ref{04}), we have%
\begin{equation}
\int_{\Omega}\left(  e^{4\pi\phi_{\varepsilon}^{2}}-\lambda\phi_{\varepsilon
}^{2}\right)  dx\geq\left\vert \Omega\right\vert +\pi e\sup_{x\in\Omega
}r_{\Omega}^{2}\left(  x\right)  +\frac{\left(  4\pi-\lambda\right)
\int_{\Omega}G_{\Omega,y_{0}}^{2}dx}{C^{2}}+O(R^{-2})\text{.} \label{05}%
\end{equation}
If
\[
\frac{\left(  4\pi-\lambda\right)  \int_{\Omega}G_{\Omega,y_{0}}^{2}dx}{C^{2}%
}+O(R^{-2})>0\text{,}%
\]
which is equivalent to $\lambda<4\pi+O\left(  \frac{R^{-1}}{\int_{\Omega
}G_{\Omega,y_{0}}^{2}dx}\right)  $, then we can conclude from (\ref{05}) that%
\[
{S_{\Omega}}(\lambda,p)\geq\int_{\Omega}\left(  e^{4\pi\phi_{\varepsilon}^{2}%
}-\lambda\phi_{\varepsilon}^{p}\right)  dx>\left\vert \Omega\right\vert +\pi
e\sup_{x\in\Omega}r_{\Omega}^{2}\left(  x\right)  =S_{\Omega}^{\delta},
\]
for $\varepsilon>0$ small enough, thus ${S_{\Omega}}(\lambda,p)$ can be
attained provided $\lambda<4\pi+O\left(  \frac{R^{-1}}{\int_{\Omega}G_{\Omega
,y_{0}}^{2}dx}\right)  $. Then it is clear from Lemma \ref{225} that
$$\lambda^{\ast}\left(  2\right)  \geq4\pi+O\left(  \frac{R^{-1}}{\int_{\Omega
}G_{\Omega,y_{0}}^{2}dx}\right).  $$
\end{remark}

\section{ Existence and non-existence of extremals for perturbation
Trudinger-Moser inequalities for $1\leq p\leq2$}

In this section, our focus is on analyzing the existence and non-existence of extremals for the perturbation Trudinger-Moser inequality when $1\leq p\leq2$. In this setting, it can be readily demonstrated that there exists a small $\varepsilon_0>0$, such that the perturbation Trudinger-Moser inequalities always possess extremal functions for $\lambda\in\left(  -\infty
,4\pi+\varepsilon_0\right).$  Leveraging this result along with Lemma \ref{225}, we will establish, by contradiction, the existence of a threshold $\lambda^{*}\in\left( 4\pi
,+\infty\right)$ that determines the existence and non-existence of extremals.

Assume that for
any $\lambda\in%
\mathbb{R}
$, $S_{\Omega}\left(  \lambda,p\right)  $ is always attained by some
$u_{\lambda}\in H_{0}^{1}(\Omega)$ with $\int_{\Omega}|\nabla u_{\lambda}%
|^{2}dx=1$. Without loss of generality, we set a sequence $\{\lambda_{k}\}$ such that
$\lambda_{k}\rightarrow\infty$ as $k\rightarrow\infty$ and let $u_{k}$ be the
corresponding maximizer $u_{\lambda_{k}}$ of $S_{\Omega}\left(  \lambda
_{k},p\right)  $. From Lemma \ref{224}, we have
\[
S_{\Omega}(0,p)-\lambda_{k}\int_{\Omega}\left\vert u_{k}\right\vert ^{p}%
\geq\int_{\Omega}\left(  e^{4\pi u_{k}^{2}}-\lambda_{k}\left\vert
u_{k}\right\vert ^{p}\right)  dx=S_{\Omega}\left(  \lambda_{k},p\right)  \geq
S_{\Omega}^{\delta}\text{,}%
\]
which implies
\[
\lambda_{k}\int_{\Omega}\left\vert u_{k}\right\vert ^{p}dx=O_{k}(1)\text{,}%
\]
then we can obtain $u_{k}\rightharpoonup0$ in $H_{0}^{1}(\Omega)$.

Let $x_{k}$ be the maximum point of $u_{k}$ in $\overline{\Omega}$. Up to a
subsequence, we can always assume that
\[
x_{k}\rightarrow x_{0}\in\overline{\Omega},\text{ as }k\rightarrow\infty,
\]
where $x_{0}$ is called the blow up point.\ \ Denote $c_{k}:=u_{k}\left(
x_{k}\right)  =\underset{\Omega}{\max}u_{k}$ and we claim that
\begin{equation}
c_{k}\rightarrow\infty\text{ as }k\rightarrow\infty\text{.} \label{230}%
\end{equation}
In fact, if $\left\{  c_{k}\right\}  $ is bounded, through the Lebesgue
dominated convergence theorem, then
\[
\underset{k\rightarrow\infty}{\lim\inf}\int_{\Omega}\left(  e^{4\pi u_{k}^{2}%
}-\lambda_{k}u_{k}^{p}\right)  dx\leq\underset{k\rightarrow\infty}{\lim\inf
}\int_{\Omega}e^{4\pi u_{k}^{2}}dx=|\Omega|\text{,}%
\]
which is a contradiction with $S_{\Omega}(\lambda_{k},p)\geq S_{\Omega
}^{\delta}>|\Omega|$. Hence, (\ref{230}) holds true.

Note that $u_{k}$ is the extremal function of the supremum $S_{\Omega}\left(
\lambda_{k},p\right)  $, then $u_{k}$ satisfies the Euler-Lagrange equation%
\begin{equation}%
\begin{cases}
-\Delta{u_{k}}=\frac{{{{4\pi}}}}{{{E_{k}}}}\left(  {{u_{k}}{e^{{{4\pi}}%
u_{k}^{2}}}-}\frac{p}{8\pi}\lambda_{k}{{u_{k}^{p-1}}}\right)  ,\  &
\text{in}\ \Omega\text{,}\\
u_{k}>0,\  & \text{in}\ \Omega\text{,}\\
u_{k}=0, & \text{on}\ \partial\Omega\text{,}%
\end{cases}
\label{eq of un}%
\end{equation}
where%
\[
E_{k}:={4\pi}\int_{\Omega}\left(  {{u_{k}^{2}}{e^{{{4\pi}}u_{k}^{2}}}-}%
\frac{p}{8\pi}\lambda_{k}{{u_{k}^{p}}}\right)  dx\text{.}%
\]
Set $v_{k}:=2\sqrt{\pi}u_{k}$, then by (\ref{eq of un}), $v_{k}$ satisfies%
\begin{equation}%
\begin{cases}
-\Delta{v_{k}}=\frac{{{4\pi}}}{{{E_{k}}}}\left(  {{v_{k}}{e^{v_{k}^{2}}}%
-\frac{p}{{2{{\left(  {4\pi}\right)  ^{\frac{p}{2}}}}}}\lambda_{k}v_{k}^{p-1}%
}\right)  ,\  & \text{in}\ \Omega\text{,}\\
{v_{k}}>0, & \text{in}\ \Omega\text{,}\\
v_{k}=0\text{,} & \text{on}\ \partial\Omega\text{,}%
\end{cases}
\label{eq of vn}%
\end{equation}
and%
\begin{equation*}
\int_{\Omega}|\nabla v_{k}|^{2}dx=4\pi\text{,}\quad E_{k}=\int_{\Omega}\left(
v_{k}^{2}e^{v_{k}^{2}}-\frac{p}{2{{\left(  {4\pi}\right)  ^{\frac{p}{2}}}}%
}{\lambda_{k}}v_{k}^{p}\right)  dx\text{.} %
\end{equation*}
The energy $\left\Vert \nabla v_{k}\right\Vert _{L^{2}\left(  \Omega\right)
}^{2}$ can also be written as
\begin{equation}
\left\Vert \nabla v_{k}\right\Vert _{L^{2}\left(  \Omega\right)  }^{2}%
=\frac{{4\pi}}{E_{k}}\int_{\Omega}\left(  v_{k}^{2}e^{v_{k}^{2}}-\frac
{p}{2{{\left(  {4\pi}\right)  ^{\frac{p}{2}}}}}{\lambda_{k}}v_{k}^{p}\right)
dx:=I_{E}-I_{P}\text{.} \label{14}%
\end{equation}

In the sequel, we will analyze the limit behavior of $\left\{  v_{k}\right\}
$ as $k\rightarrow\infty$, and find the contradiction with the fact
$\left\Vert \nabla v_{k}\right\Vert _{L^{2}\left(  \Omega\right)  }^{2}=1$ by
estimating the integral of the Lebesgue term $I_{P}$ and that of the exponential term $I_{E}$, respectively.

\subsection{ Blow-up analysis and estimates  for the Lebesgue integral $I_{P}$
\label{poly}}

In this subsection, we will study the limit behavior of $\left\{
v_{k}\right\}  $ near and far away from the blow-up point $x_{0}$, and
estimates  for the Lebesgue integral $I_{P}$ in (\ref{14}).

\begin{lemma}
\label{231}It holds%
\[
\underset{k\rightarrow\infty}{\lim\inf}E_{k}>0\text{.}%
\]

\end{lemma}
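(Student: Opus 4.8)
The plan is to argue by contradiction: suppose that, along a subsequence, $E_k \to 0$. Recall that
\[
E_k = \int_\Omega \left( v_k^2 e^{v_k^2} - \tfrac{p}{2(4\pi)^{p/2}}\lambda_k v_k^p \right) dx,
\]
and that we already know $\lambda_k \int_\Omega |u_k|^p\,dx = O_k(1)$, hence (since $v_k = 2\sqrt\pi u_k$) also $\lambda_k \int_\Omega v_k^p\,dx = O_k(1)$. Therefore $E_k \to 0$ would force
\[
\int_\Omega v_k^2 e^{v_k^2}\,dx = O_k(1),
\]
so the exponential part of $E_k$ is bounded. My aim is to show this, combined with $\|\nabla v_k\|_{L^2(\Omega)}^2 = 4\pi$, leads to a contradiction with the blow-up $c_k \to \infty$ established in (\ref{230}).

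First I would extract the dichotomy: since $\|\nabla u_k\|_{L^2}=1$ and $u_k \rightharpoonup 0$, the sequence $\{u_k\}$ is (up to subsequence) either a normalized concentrating sequence at $x_0$, or it is not. If it is \emph{not} an (NCS), then by the Concentration–Compactness principle of Lions for Trudinger–Moser inequalities, $e^{4\pi u_k^2}$ is bounded in $L^q(\Omega)$ for some $q>1$, hence converges strongly in $L^1$; combined with $u_k \to 0$ in $L^2$ this gives $\int_\Omega \left( e^{4\pi u_k^2} - \lambda_k u_k^p\right)dx \to |\Omega|$ (using $\lambda_k\int |u_k|^p = O_k(1)$ must actually tend to $0$ here, or else refine the argument), contradicting $S_\Omega(\lambda_k,p)\geq S_\Omega^\delta > |\Omega|$. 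Actually, more directly: if $\{u_k\}$ is not an (NCS) then by the same compactness argument used in Lemma \ref{lem2.1}/Lemma \ref{225}, $u_k \rightharpoonup u_\lambda$ with $u_\lambda \not\equiv 0$, contradicting $u_k\rightharpoonup 0$. So $\{u_k\}$ \emph{must} be an (NCS) at $x_0$.

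Now that $\{u_k\}$ is an (NCS), I test the Euler–Lagrange equation (\ref{eq of vn}) against $v_k$ itself. Integration by parts gives
\[
4\pi = \int_\Omega |\nabla v_k|^2\,dx = \frac{4\pi}{E_k}\int_\Omega\left( v_k^2 e^{v_k^2} - \tfrac{p}{2(4\pi)^{p/2}}\lambda_k v_k^p\right)dx = \frac{4\pi}{E_k}\cdot E_k,
\]
which is the tautology $4\pi = 4\pi$ and says nothing — so instead I must use the structure more carefully. The right approach: from the definition $E_k$ appears in the normalization, and $\|\nabla v_k\|^2_{L^2} = I_E - I_P$ with $I_E = \frac{4\pi}{E_k}\int_\Omega v_k^2 e^{v_k^2}dx$ and $I_P = \frac{4\pi}{E_k}\cdot\frac{p}{2(4\pi)^{p/2}}\lambda_k\int_\Omega v_k^p\,dx$. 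If $E_k\to 0$, then since $\lambda_k\int v_k^p = O_k(1)$ we get $I_P \to +\infty$ unless the numerator also vanishes; but $I_E \geq 0$, so $1 = \frac{1}{4\pi}(I_E - I_P)$ forces $I_P \leq I_E$, hence $I_E \to +\infty$ as well, i.e. $\frac{1}{E_k}\int_\Omega v_k^2 e^{v_k^2}dx \to \infty$. Combined with $\int_\Omega v_k^2 e^{v_k^2}dx = E_k + O_k(1)\to 0+O_k(1)$, this is the key tension. I would then obtain a lower bound on $\int_\Omega v_k^2 e^{v_k^2}dx$ bounded away from $0$ using the (NCS) concentration: since $c_k = \max u_k \to \infty$, a standard lower-bound argument (e.g. the Carleson–Chang type estimate, or simply that $e^{v_k^2}\geq 1$ and bounding $\int v_k^2 e^{v_k^2} \geq \int_{\{v_k \geq 1\}} v_k^2 e^{v_k^2}$, together with the fact that concentration of the Dirichlet energy at $x_0$ prevents $v_k$ from being small everywhere where it contributes to the integral) shows $\liminf_k \int_\Omega v_k^2 e^{v_k^2}dx > 0$. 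But then $E_k = \int_\Omega v_k^2 e^{v_k^2}dx - \tfrac{p}{2(4\pi)^{p/2}}\lambda_k\int_\Omega v_k^p\,dx \geq c_0 - O_k(1) = c_0 - o(1) > 0$ for large $k$ — wait, $\lambda_k\int v_k^p = O_k(1)$ need not be $o(1)$. So the cleanest finish: write $E_k \geq \int_\Omega v_k^2 e^{v_k^2}\,dx - C$ for a fixed constant $C$; if this is not enough I sharpen to show $\int_\Omega v_k^2 e^{v_k^2}dx \to \infty$ directly, which I expect from the concentration phenomenon (the mass $e^{v_k^2}$ concentrates at $x_0$ carrying unbounded $L^1$-weight of $v_k^2 e^{v_k^2}$).

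The main obstacle is the last step: ruling out $E_k\to 0$ given only the weak information that $\lambda_k\int_\Omega |u_k|^p\,dx$ is bounded (not small), because the Lebesgue perturbation $-\tfrac{p}{2(4\pi)^{p/2}}\lambda_k v_k^p$ is negative and of order $O_k(1)$, so it could in principle cancel a bounded exponential contribution. The resolution is to show the exponential contribution $\int_\Omega v_k^2 e^{v_k^2}\,dx$ is in fact \emph{unbounded} as $k\to\infty$ (not merely bounded below by a small constant): since $\{u_k\}$ concentrates and $c_k\to\infty$, a rescaling $\eta_k(y) = c_k(u_k(x_k + r_k y) - c_k)$ (with $r_k$ chosen so $r_k^2 c_k^2 e^{4\pi c_k^2} = $ const) converges to the standard bubble $-\tfrac{1}{4\pi}\log(1+\pi|y|^2)$, and the change of variables shows $\int_{B_{r_k L}(x_k)} u_k^2 e^{4\pi u_k^2}dx \approx c_k^2 \cdot \frac{1}{4\pi}\int_{B_L} e^{-\frac{1}{2\pi}\log(1+\pi|y|^2)}dy \to \infty$ like $c_k^2$ at least logarithmically divergent, hence $E_k \to \infty$, in particular $\liminf E_k > 0$. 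This rescaling-and-bubble argument is the technical heart; everything else is bookkeeping with the (NCS) structure.
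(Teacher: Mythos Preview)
Your approach has a genuine gap, and it is also far more involved than necessary. The rescaling/bubble argument you propose at the end --- defining $r_k$ by $r_k^2 c_k^2 e^{4\pi c_k^2} = \mathrm{const}$ and showing $\phi_k \to \phi_\infty$ --- is precisely the blow-up analysis the paper carries out \emph{after} Lemma~\ref{231}, and that analysis uses Lemma~\ref{231} as an input (for instance, to show the perturbation term in the rescaled equation vanishes, via \eqref{ad2}, and in the denominator $1/E_k$ throughout). So within the paper's logical flow your argument is circular. You also correctly flag, with your repeated ``wait,'' that from $\lambda_k\int|u_k|^p = O_k(1)$ alone you cannot rule out a bounded Lebesgue term canceling a bounded exponential term; your resolution (push the exponential term to $+\infty$ via blow-up) is exactly the step that begs the question.

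The paper's proof is four lines and uses nothing but the already-established lower bound $S_\Omega(\lambda_k,p)\geq S_\Omega^\delta$ from Lemma~\ref{224}. Split $\int_\Omega e^{4\pi u_k^2}dx$ over $\{u_k\leq 1\}$ and $\{u_k>1\}$; on the first set dominated convergence gives $|\Omega|+o_k(1)$ since $u_k\rightharpoonup 0$, and on the second set $e^{4\pi u_k^2}\leq u_k^2 e^{4\pi u_k^2}$. Since $p/(8\pi)<1$, one may also weaken $-\lambda_k\int u_k^p$ to $-\tfrac{p}{8\pi}\lambda_k\int u_k^p$. Together,
\[
S_\Omega^\delta \leq \int_\Omega\!\bigl(e^{4\pi u_k^2}-\lambda_k u_k^p\bigr)dx \leq |\Omega|+o_k(1)+\int_\Omega u_k^2 e^{4\pi u_k^2}dx-\tfrac{p}{8\pi}\lambda_k\!\int_\Omega u_k^p\,dx = |\Omega|+\tfrac{E_k}{4\pi}+o_k(1),
\]
so $\liminf_k E_k \geq 4\pi(S_\Omega^\delta-|\Omega|)>0$. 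No contradiction argument, no (NCS) dichotomy, no blow-up is needed at this stage; the key identity is that the combination $\int u_k^2 e^{4\pi u_k^2}-\tfrac{p}{8\pi}\lambda_k\int u_k^p$ is exactly $E_k/(4\pi)$, which your proposal never exploits.
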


\begin{proof}
 By Lemma \ref{224}, we have%
\begin{align*}
{{S_{\Omega}^{\delta}}} &  \leq\int_{\Omega}\left(  e^{4\pi u_{k}^{2}}%
-\lambda_{k}u_{k}^{p}\right)  dx\\
&  \leq\int_{\{u_{k}\leq1\}}e^{4\pi u_{k}^{2}}dx+\int_{\{u_{k}>1\}}e^{4\pi
u_{k}^{2}}dx-\frac{p}{8\pi}\lambda_{k}\int_{\Omega}u_{k}^{p}dx\\
&  \leq\left\vert \Omega\right\vert +o_{k}\left(  1\right)  +\int
_{\{u_{k}>1\}}u_{k}^{2}e^{4\pi u_{k}^{2}}dx-\frac{p}{8\pi}\lambda_{k}%
\int_{\Omega}u_{k}^{p}dx\\
&  \leq\left\vert \Omega\right\vert +\frac{E_{k}}{4\pi}+o_{k}\left(  1\right)
\text{.}%
\end{align*}
Hence, we obtain
\[
\underset{k\rightarrow\infty}{\lim\inf}E_{k}\geq4\pi\left(  {{S_{\Omega
}^{\delta}}-\left\vert \Omega\right\vert }\right)  >0.
\]
\end{proof}

\begin{lemma}
\label{232} The sequence $\left\{  u_{k}\right\}  _{k}$ must be a
(NCS) at some interior point ${{y_{0}}}\in\Omega$.
\end{lemma}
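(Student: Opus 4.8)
\textbf{Proof proposal for Lemma \ref{232}.}
The plan is to rule out, one by one, the two ways in which $\{u_k\}$ could fail to be a normalized concentrating sequence at an interior point: either $\{u_k\}$ does not concentrate at all (i.e. the Dirichlet mass does not localize), or it concentrates at a boundary point $x_0\in\partial\Omega$. First I would establish concentration. Since $\|\nabla u_k\|_{L^2}=1$ and $c_k=\max_\Omega u_k\to\infty$ by \eqref{230}, the standard argument (as in Carleson--Chang, Li, or the Lions concentration-compactness principle cited via \cite{Lions}) shows that if $\{u_k\}$ did \emph{not} concentrate then $e^{4\pi u_k^2}$ would be bounded in $L^q$ for some $q>1$; combined with $u_k\rightharpoonup 0$ and the Lebesgue-perturbation term satisfying $\lambda_k\int_\Omega|u_k|^p\,dx=O_k(1)$ (with $u_k\to 0$ strongly in $L^p$), this would force $\limsup_k\int_\Omega(e^{4\pi u_k^2}-\lambda_k|u_k|^p)\,dx\le|\Omega|$, contradicting $S_\Omega(\lambda_k,p)\ge S_\Omega^\delta>|\Omega|$. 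Hence the measures $|\nabla u_k|^2\,dx$ must concentrate, and because $u_k\rightharpoonup 0$ with unit Dirichlet energy, they concentrate (along a subsequence) at a single point $x_0=\lim x_k\in\overline\Omega$; testing the equation against suitable cutoffs shows $|\nabla u_k|^2\,dx\rightharpoonup \delta_{x_0}$ in the sense of measures, so $\{u_k\}$ is an (NCS) at $x_0$.

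It remains to exclude $x_0\in\partial\Omega$. This is where Remark \ref{re2.2} does the work: if the (NCS) $\{u_k\}$ concentrates at a boundary point, then $\tau_\Omega(x_0)=0$, $r_\Omega(x_0)$ is not defined in the usual interior sense, and Proposition \ref{lem2.2} together with Remark \ref{re2.2} gives
\[
\limsup_{k\to\infty}\int_\Omega e^{4\pi u_k^2}\,dx\le|\Omega|.
\]
Since $u_k\to 0$ in $L^p(\Omega)$ and $\lambda_k\int_\Omega|u_k|^p\,dx=O_k(1)$ — more precisely this integral is nonnegative so subtracting it only decreases the functional — we get
\[
S_\Omega(\lambda_k,p)=\lim_{k\to\infty}\int_\Omega\bigl(e^{4\pi u_k^2}-\lambda_k|u_k|^p\bigr)\,dx\le|\Omega|,
\]
which again contradicts $S_\Omega(\lambda_k,p)\ge S_\Omega^\delta>|\Omega|$ from Lemma \ref{224} and \eqref{addcon} (note $S_\Omega^\delta=|\Omega|+\pi e\sup_{x\in\Omega}r_\Omega^2(x)>|\Omega|$ since the harmonic radius is strictly positive at interior points). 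Therefore $x_0\in\Omega$ is an interior point, completing the proof.

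The main obstacle I anticipate is making the first step — the dichotomy "concentrate or not" — fully rigorous in the presence of the sign-indefinite, non-monotone Lebesgue perturbation. In the unperturbed setting one simply invokes the concentration-compactness alternative for the Trudinger--Moser functional; here one must check that the perturbation term $\frac{p}{8\pi}\lambda_k\int_\Omega u_k^p\,dx$ does not interfere. The saving fact, already recorded in the text preceding this lemma, is that $\lambda_k\int_\Omega|u_k|^p\,dx=O_k(1)$ and $u_k\rightharpoonup 0$; since $p\le 2$ and the $u_k$ are bounded in $H^1_0$, Sobolev compactness gives $u_k\to 0$ in $L^p$, so the perturbation contributes a bounded, in fact the relevant quantity stays controlled, and one can run the standard argument on the dominant exponential part. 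A secondary technical point is the uniqueness of the concentration point: a priori the defect measure could be supported on several points, but the unit-mass constraint $\int_\Omega|\nabla u_k|^2\,dx=1$ forces at most one atom of mass $1$, and the location is pinned down by $x_k\to x_0$. Once these points are handled, the boundary exclusion via Remark \ref{re2.2} is routine.
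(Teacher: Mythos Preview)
Your proposal is correct and follows essentially the same two-step outline as the paper: first show that $\{u_k\}$ must concentrate, then use Remark~\ref{re2.2} to exclude boundary concentration. The boundary step is identical to the paper's.

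For the concentration step, however, you take a somewhat different route. You argue that if $\{u_k\}$ did not concentrate, then $e^{4\pi u_k^2}$ would be bounded in $L^q$ for some $q>1$, whence by Vitali $\int_\Omega e^{4\pi u_k^2}\to|\Omega|$, contradicting $S_\Omega(\lambda_k,p)\ge S_\Omega^\delta>|\Omega|$. The paper instead introduces the auxiliary function $\tilde u_k$ solving $-\Delta\tilde u_k=\tfrac{4\pi}{E_k}u_k e^{4\pi u_k^2}$ with zero boundary data, observes $u_k\le\tilde u_k$ by the maximum principle (since the perturbation term in \eqref{eq of un} has the right sign), and then uses the same local $L^q$ bound plus elliptic regularity to conclude $\tilde u_k$ --- hence $u_k$ --- is bounded in $L^\infty$, contradicting $c_k\to\infty$. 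Your argument is arguably cleaner (no auxiliary PDE), while the paper's yields the stronger intermediate conclusion that $u_k$ is uniformly bounded, and makes explicit how the Euler--Lagrange structure is used to handle the perturbation.

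One point of caution: you assert that the concentration point equals $x_0=\lim x_k$, writing ``$|\nabla u_k|^2\,dx\rightharpoonup\delta_{x_0}$''. The paper is deliberately agnostic here, calling the concentration point $y_0$ and deferring the identification $y_0=x_0$ to Remark~\ref{corresp}, after Lemma~\ref{le4.5}. For the present lemma you only need concentration at \emph{some} interior point, so this over-claim is harmless, but you should not rely on $y_0=x_0$ until it is actually established.
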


\begin{proof}
According to the definition of (NCS), we only need to prove that there exists
some interior point ${{y_{0}\in\Omega}}$ such that
\begin{equation}
\lim\limits_{k\rightarrow\infty}\int_{\Omega\backslash{B_{\varepsilon}}\left(
{{y_{0}}}\right)  }|\nabla u_{k}|^{2}dx=0\label{234}%
\end{equation}
for any $\varepsilon>0$. Suppose not, then for any ${{y\in\Omega}}$, there
exists some $\varepsilon_{0}>0$ such that
\begin{equation}
\lim\limits_{k\rightarrow\infty}\int_{\Omega\cap{B}_{\varepsilon_{0}}\left(
{{y}}\right)  }|\nabla u_{k}|^{2}dx\leq\delta<1.\label{235}%
\end{equation}
Next, we will show that $\left\{  u_{k}\left(  x_{k}\right)  \right\}  $ is
bounded under the assumption (\ref{235}). Now, we introduce a new sequence
$\left\{  \tilde{u}_{k}\right\}  $ with $\tilde{u}_{k}$ satisfies
\begin{equation}%
\begin{cases}
-\Delta{\tilde{u}_{k}}=\frac{4\pi}{E_{k}}u_{k}e^{4\pi u_{k}^{2}},\  &
\text{in}\ \Omega\text{,}\\
\tilde{u}_{k}>0,\  & \text{in}\ \Omega\text{,}\\
\tilde{u}_{k}=0, & \text{on}\ \partial\Omega\text{.}%
\end{cases}
\label{com}%
\end{equation}
From (\ref{eq of un}) and (\ref{com}), we can obtain $u_{k}\leq\tilde{u}_{k}$
by maximum principle. Hence, the boundedness of $\left\{  u_{k}\right\}  $ can
be proved if we can show that $\left\{  \tilde{u}_{k}\right\}  $ is bounded.
For this, we choose a cut-off function $\phi$ such that%
\[
\phi(x)\in C_{c}^{\infty}(\mathbb{R}^{2}),\quad\phi(x)=0\text{ in }%
{\mathbb{R}^{2}\backslash B_{\varepsilon_{0}}(y)},\quad\phi(x)=1\text{ in
}{{B_{\varepsilon_{0}/2}(y)}}.
\]
\ Using (\ref{235}), combining the fact that $u_{k}\rightharpoonup0$ in
$H_{0}^{1}\left(  \Omega\right)  $ and the Sobolev compact imbedding, we have
\[
\lim\limits_{k\rightarrow\infty}\int_{\Omega\cap B_{\varepsilon_{0}}%
(y)}|\nabla(u_{k}\phi)|^{2}dx\leq\delta^{\prime}\text{,}%
\]
\ for some $\delta^{\prime}<1$, then from Trudinger-Moser inequality
(\ref{1}), we see that $\left\{  u_{k}e^{4\pi u_{k}^{2}}\right\}  $ is bounded
in $L^{q}({{B_{\varepsilon_{0}}(y)}})$ for some $q>1$. Since $y$ is an
arbitrary point in $\Omega$, we may cover $\Omega$ by finitely many balls
${{B_{\varepsilon_{0}}(y)}}$ to see that $\left\{  u_{k}e^{4\pi u_{k}^{2}%
}\right\}  $ is bounded in $L^{q}\left(  \Omega\right)  $ for $q>1$. This
together with equation \eqref{com} yields that $\left\{  \tilde{u}%
_{k}\right\}  $ is bounded in $\Omega$ by standard elliptic theory, this
implies $\left\{  u_{k}\left(  x_{k}\right)  \right\}  $ is bounded, which
contradicts with (\ref{230}). Therefore, (\ref{234}) holds true for some
$y_{0}\in\overline{\Omega}$, then $\left\{  u_{k}\right\}  $ is a (NCS) at
$y_{0}\in\overline{\Omega}$.

Now, we show that $y_{0}$ must be an interior point of $\Omega$. In fact, if
$y_{0}\in\partial\Omega$, by Remark \ref{re2.2}, we have%
\begin{equation}
\underset{k\rightarrow\infty}{\lim}\int_{\Omega}e^{4\pi u_{k}^{2}}%
dx\leq\left\vert \Omega\right\vert \text{.}\label{237}%
\end{equation}
Then from Lemma \ref{224}, we can derive that
\[
S_{\Omega}^{\delta}\leq\lim\limits_{k\rightarrow\infty}\int_{\Omega}\left(
e^{4\pi u_{k}^{2}}-\lambda_{k}u_{k}^{p}\right)  dx\leq\lim
\limits_{k\rightarrow\infty}\int_{\Omega}e^{4\pi u_{k}^{2}}dx\text{,}%
\]
which is a contradiction with (\ref{237}). Hence, $y_{0}$ must be an interior
point of $\Omega$, the proof is finished.
\end{proof}

\begin{remark}

Currently, we cannot claim that $y_{0}$ coincides with the blow-up point $x_{0}$; this will be rigorously demonstrated in Remark \ref{corresp}.

\end{remark}

\begin{lemma}
\label{lem12} We have%
\begin{equation}
\lim\limits_{k\rightarrow\infty}\int_{\Omega}e^{v_{k}^{2}}dx=S_{\Omega
}^{\delta}\text{,}\label{vanish pe}%
\end{equation}
and%
\begin{equation}
\lim\limits_{k\rightarrow\infty}\lambda_{k}\int_{\Omega}v_{k}^{p}%
dx=0\text{.}\label{238}%
\end{equation}

\end{lemma}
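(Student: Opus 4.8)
The plan is to deduce both \eqref{vanish pe} and \eqref{238} from information already in hand: the relation \eqref{14}, Lemma \ref{231}, Lemma \ref{232}, and Lemma \ref{224}. Recall $v_k = 2\sqrt\pi u_k$ so that $e^{v_k^2}=e^{4\pi u_k^2}$ and $\|\nabla v_k\|_{L^2}^2 = 4\pi$. Since $\{u_k\}$ is a (NCS) at an interior point $y_0\in\Omega$ by Lemma \ref{232}, Proposition \ref{lem2.2} gives the upper bound
\[
\limsup_{k\to\infty}\int_\Omega e^{v_k^2}\,dx = \limsup_{k\to\infty}\int_\Omega e^{4\pi u_k^2}\,dx \le |\Omega| + \pi e\, r_\Omega^2(y_0) \le S_\Omega^\delta.
\]
For the matching lower bound, I would use that $u_k$ is a maximizer: by Lemma \ref{224}, $\int_\Omega(e^{4\pi u_k^2}-\lambda_k u_k^p)\,dx = S_\Omega(\lambda_k,p)\ge S_\Omega^\delta$, and since $\lambda_k\int_\Omega u_k^p\,dx = O_k(1)$ was already shown (indeed we will see below it is $o_k(1)$), one gets
\[
\liminf_{k\to\infty}\int_\Omega e^{v_k^2}\,dx = \liminf_{k\to\infty}\int_\Omega e^{4\pi u_k^2}\,dx \ge S_\Omega^\delta + \liminf_{k\to\infty}\Big(\lambda_k\int_\Omega u_k^p\,dx\Big)\ge S_\Omega^\delta.
\]
Hence the limit exists and equals $S_\Omega^\delta$, which is \eqref{vanish pe} (the constant adjusting $u_k^p$ versus $v_k^p$ is harmless since $\lambda_k\int v_k^p$ and $\lambda_k\int u_k^p$ differ only by the fixed factor $(2\sqrt\pi)^p$).

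For \eqref{238}, the idea is to combine $\|\nabla v_k\|_{L^2}^2=4\pi$ with \eqref{14}, which reads $4\pi = I_E - I_P$ with $I_E = \tfrac{4\pi}{E_k}\int_\Omega v_k^2 e^{v_k^2}\,dx$ and $I_P = \tfrac{4\pi}{E_k}\cdot\tfrac{p}{2(4\pi)^{p/2}}\lambda_k\int_\Omega v_k^p\,dx$. By definition $E_k = \int_\Omega v_k^2 e^{v_k^2}\,dx - \tfrac{p}{2(4\pi)^{p/2}}\lambda_k\int_\Omega v_k^p\,dx$, so in fact $I_E - I_P = \tfrac{4\pi}{E_k}\cdot E_k = 4\pi$ identically, which is not yet a contradiction — the point of \eqref{238} is to show $I_P\to 0$ so that $I_E\to 4\pi$ separately, decoupling the two pieces. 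To get $\lambda_k\int_\Omega v_k^p\,dx\to 0$, I would argue directly: from $S_\Omega(0,p) - \lambda_k\int_\Omega u_k^p\,dx \ge S_\Omega(\lambda_k,p)\ge S_\Omega^\delta$ we already have $\lambda_k\int_\Omega u_k^p\,dx$ bounded; to upgrade "bounded" to $o_k(1)$, combine this with the now-established \eqref{vanish pe}: since $\int_\Omega e^{4\pi u_k^2}\,dx \to S_\Omega^\delta$ and $\int_\Omega(e^{4\pi u_k^2}-\lambda_k u_k^p)\,dx = S_\Omega(\lambda_k,p)$ with $S_\Omega^\delta \le S_\Omega(\lambda_k,p)\le S_\Omega(0,p)$, we get
\[
\lambda_k\int_\Omega u_k^p\,dx = \int_\Omega e^{4\pi u_k^2}\,dx - S_\Omega(\lambda_k,p) = S_\Omega^\delta + o_k(1) - S_\Omega(\lambda_k,p).
\]
Thus $0\le \lambda_k\int_\Omega u_k^p\,dx \le S_\Omega^\delta - S_\Omega(\lambda_k,p) + o_k(1)$; since $S_\Omega(\lambda_k,p)\ge S_\Omega^\delta$ for all $k$, the right side is $\le o_k(1)$, forcing $\lambda_k\int_\Omega u_k^p\,dx\to 0$, hence $\lambda_k\int_\Omega v_k^p\,dx = (2\sqrt\pi)^p\,\lambda_k\int_\Omega u_k^p\,dx\to 0$, which is \eqref{238}.

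The only subtlety — and the step I would be most careful with — is making sure the lower bound $\liminf\int e^{4\pi u_k^2}\ge S_\Omega^\delta$ and the identity $\lambda_k\int u_k^p\to 0$ are not argued circularly; the clean order is: (1) get the upper bound $\limsup\int e^{v_k^2}\le S_\Omega^\delta$ from Lemma \ref{232} + Proposition \ref{lem2.2}; (2) get the lower bound from $S_\Omega(\lambda_k,p)\ge S_\Omega^\delta$ together with the already-known boundedness (not smallness) of $\lambda_k\int u_k^p$; these two give \eqref{vanish pe}; (3) feed \eqref{vanish pe} back together with $S_\Omega^\delta\le S_\Omega(\lambda_k,p)$ to squeeze $\lambda_k\int u_k^p\,dx\to 0$, giving \eqref{238}. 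No new machinery is needed; everything reduces to the variational characterization of $u_k$, the (NCS) property, and the concentration bound already established.
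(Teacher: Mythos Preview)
Your proposal is correct and follows essentially the same approach as the paper: the paper writes a single sandwich chain $S_\Omega^\delta \le \liminf_k \int_\Omega (e^{4\pi u_k^2}-\lambda_k u_k^p)\,dx \le \limsup_k \int_\Omega e^{4\pi u_k^2}\,dx \le |\Omega|+\pi e\, r_\Omega^2(y_0) \le S_\Omega^\delta$, from which both \eqref{vanish pe} and \eqref{238} fall out at once. Your version unpacks this into separate upper/lower bounds and then a squeeze for \eqref{238}, and the detour through $I_E-I_P$ is unnecessary (as you noted), but the ingredients and logic are identical.
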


\begin{proof}
It follows from Lemma \ref{232} and Lemma \ref{224} that
\begin{align*}
S_{\Omega}^{\delta} &  \leq\underset{k\rightarrow\infty}{\lim\inf}\int
_{\Omega}\left(  e^{4\pi u_{k}^{2}}-\lambda_{k}\left\vert u_{k}\right\vert
^{p}\right)  dx\\
&  \leq\underset{k\rightarrow\infty}{\lim\sup}\int_{\Omega}e^{4\pi u_{k}^{2}%
}dx\leq|\Omega|+\pi er_{\Omega}^{2}\left(  y_{0}\right)  \leq S_{\Omega
}^{\delta}\text{.}%
\end{align*}
This implies that the $y_{0}$ must be the maximum point of the harmonic radius
$r_{\Omega}$. Recalling that $v_{k}=2\sqrt{\pi}u_{k}$, we immediately obtain
(\ref{vanish pe}) and (\ref{238}).
\end{proof}

We set $\gamma_k=2\sqrt{\pi}c_k$,
\begin{equation}
r_{k}^{2}:=\frac{E_{k}}{\pi}\gamma_{k}^{-2}e^{-\gamma_{k}^{2}}\text{,}%
\label{3.5}%
\end{equation}
and define
\[
\left\{
\begin{array}
[c]{l}%
\psi_{k}(y):=\gamma_{k}^{-1}v_{k}\left(  x_{k}+r_{k}y\right)  ,\\
\phi_{k}(y):=\gamma_{k}\left(  v_{k}\left(  x_{k}+r_{k}y\right)  -\gamma
_{k}\right)  .
\end{array}
\right.
\]
Then $\psi_{k}$ and $\phi_{k}$ are\ well defined on $\Omega_{k}:=\left\{
y\in\mathbb{R}^{2}:x_{k}+r_{k}y\in\Omega\right\}  $, and\ satisfy the
following equations %

\begin{equation}
-\Delta\psi_{k}=\frac{4}{\gamma_{k}^{2}}\left[  \psi_{k}e^{\gamma_{k}%
^{2}\left(  \psi_{k}^{2}-1\right)  }-\frac{p}{2{{\left(  {4\pi}\right)
^{\frac{p}{2}}}}}\gamma_{k}^{p-2}e^{-\gamma_{k}^{2}}\lambda_{k}\psi_{k}%
^{p-1}\right]  \text{,}\label{19}%
\end{equation}%
\begin{equation}
-\Delta\phi_{k}=4\left[  \psi_{k}e^{\phi_{k}\left(  1+\psi_{k}\right)  }%
-\frac{p}{2{{\left(  {4\pi}\right)  ^{\frac{p}{2}}}}}\gamma_{k}^{p-2}%
e^{-\gamma_{k}^{2}}\lambda_{k}\psi_{k}^{p-1}\right]  \text{. }\label{18}%
\end{equation}
Since $E_{k}\leq\int_{\Omega}v_{k}^{2}e^{v_{k}^{2}}dx\leq\gamma_{k}%
^{2}S_{\Omega}\left(  0,p\right)  $, we can check that $r_{k}=O\left(
e^{-\frac{\gamma_{k}^{2}}{2}}\right)  $.

 Next, we will study the asymptotic behavior of $\psi_{k}$ and
$\phi_{k}$. We first show the boundedness of $\left\{  \frac{2p}{{\left(
{4\pi}\right)  ^{p/2}}}\gamma_{k}^{p-2}e^{-\gamma_{k}^{2}}\lambda_{k}\right\}
$. Multiplying \eqref{19} by $\gamma_{k}^{2}\psi_{k}$ and integrating on
$\Omega_{k}$, we have%

\[
4\pi=4\left[  \int_{\Omega_{k}}\psi_{k}^{2}e^{\gamma_{k}^{2}(\psi_{k}^{2}%
-1)}dy-\frac{p}{2(4\pi)^{\frac{p}{2}}}\gamma_{k}^{p-2}e^{-\gamma_{k}^{2}%
}\lambda_{k}\int_{\Omega_{k}}\psi_{k}^{p}dy\right]  \text{,}%
\]
then%
\begin{equation}
\frac{2p}{(4\pi)^{p/2}}\gamma_{k}^{p-2}e^{-\gamma_{k}^{2}}\lambda_{k}%
=\frac{4\int_{\Omega_{k}}\psi_{k}^{2}e^{\gamma_{k}^{2}(\psi_{k}^{2}-1)}%
dy-4\pi}{\int_{\Omega_{k}}\psi_{k}^{p}dy}\text{.}\label{20}%
\end{equation}
Since $1\leq p\leq2$ and $\psi_{k}\leq1$, we have
\begin{equation*}
\frac{2p}{{\left(  {4\pi}\right)  ^{p/2}}}\gamma_{k}^{p-2}e^{-\gamma_{k}^{2}%
}\lambda_{k}\leq4-\frac{4\pi}{\int_{\Omega_{k}}\psi_{k}^{p}dy}\leq
4\text{.}%
\end{equation*}
This implies that
\[
\frac{4}{\gamma_{k}^{2}}\left[  \psi_{k}e^{\gamma_{k}^{2}\left(  \psi_{k}%
^{2}-1\right)  }-\frac{p}{2{{\left(  {4\pi}\right)  ^{\frac{p}{2}}}}}%
\gamma_{k}^{p-2}e^{-\gamma_{k}^{2}}\lambda_{k}\psi_{k}^{p-1}\right]
=O(1/\gamma_{k}^{2})\text{.}%
\]
Applying the elliptic regularity theory to equation \eqref{19}, we derive that%
\begin{equation}
\psi_{k}\rightarrow\psi\text{ in }C_{loc}^{1}(\mathbb{R}^{2})\text{,}%
\label{247}%
\end{equation}
with $\psi(x)$ satisfying
\[
-\Delta\psi=0,\ \ x\in\mathbb{R}^{2}\text{.}%
\]
Since $\psi_{k}(0)=1$, from Liouville-type theorem we have $\psi=1$.

Next, we claim that
\begin{equation}
\lim_{k\rightarrow\infty}\gamma_{k}^{p-2}e^{-\gamma_{k}^{2}}\lambda
_{k}=0\text{.} \label{22}%
\end{equation}
Indeed, it follows from Lemma \ref{231} and (\ref{238}) that
\begin{align}
4\int_{\Omega_{k}}\psi_{k}^{2}e^{\gamma_{k}^{2}(\psi_{k}^{2}-1)}dy-4\pi &
=\frac{4\pi}{E_{k}}\left(  \int_{\Omega}v_{k}^{2}e^{v_{k}^{2}}dx-E_{k}\right)
\nonumber\\
&  =\frac{\left(  4\pi\right)  ^{1-\frac{p}{2}}}{E_{k}}\frac{p}{2}\lambda
_{k}\int_{\Omega}v_{k}^{p}dx\label{ad2}\\
&  =o_{k}\left(  1\right)  \text{.}\nonumber
\end{align}
On the other hand, since $\psi_{k}\rightarrow1$ in $C_{loc}^{1}(\mathbb{R}%
^{2})$, then%

\begin{equation}
\lim_{k\rightarrow\infty}\int_{\Omega_{k}}\psi_{k}^{p}dx\geq\lim
\limits_{R\rightarrow\infty}\lim\limits_{k\rightarrow\infty}\int_{{B_{R}}}%
\psi_{k}^{p}dx=\infty\text{.}\label{add2}%
\end{equation}
Combining (\ref{20}), (\ref{ad2}) and (\ref{add2}), we conclude that
(\ref{22}) holds true. From (\ref{247}) and (\ref{22}), applying the elliptic
regularity theory to equation (\ref{18}), we obtain that there exists some
$\phi_{\infty}$ such that
\begin{equation}
\phi_{k}\rightarrow\phi_{\infty}\text{ in }C_{loc}^{1}(\mathbb{R}^{2}%
)\text{.}\label{214}%
\end{equation}
From Chen and Li's result (see \cite{Chenli}), $\phi_{\infty}$ must take the
form as
\begin{equation}
\phi_{\infty}(x)=-\log\left(  1+|x|^{2}\right)  \text{,}\quad\label{3.6}%
\end{equation}
and satisfies%
\begin{equation}
-\Delta\phi_{\infty}=4e^{2\phi_{\infty}}\text{ \ in }\mathbb{R}^{2}\text{, and
}\int_{%
\mathbb{R}
^{2}}e^{2\phi_{\infty}}dx=\pi.\label{add690}%
\end{equation}

For any constant $A>1$, we define the truncated function
\[
v_{k,A}:=\min\left\{  \frac{\gamma_{k}}{A},v_{k}\right\}  \text{.}%
\]

\begin{lemma}
\label{le4.4} We have
\[
\lim_{k\rightarrow\infty}\int_{\Omega}\left\vert \nabla v_{k,A}\right\vert
^{2}dx=\frac{4\pi}{A}.
\]

\end{lemma}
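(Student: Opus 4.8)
The plan is a truncation argument built on the blow-up profile already obtained. Since $v_{k,A}=\min\{\gamma_k/A,v_k\}\le v_k$, with $\nabla v_{k,A}=0$ a.e.\ on $\{v_k>\gamma_k/A\}$ and $\nabla v_{k,A}=\nabla v_k$ a.e.\ on $\{v_k<\gamma_k/A\}$, one has $\nabla v_k\cdot\nabla v_{k,A}=|\nabla v_{k,A}|^2$ a.e., so testing the Euler--Lagrange equation \eqref{eq of vn} against $v_{k,A}\in H_0^1(\Omega)$ gives
\[
\int_\Omega|\nabla v_{k,A}|^2\,dx=\frac{4\pi}{E_k}\int_\Omega\Big(v_ke^{v_k^2}-\frac{p}{2(4\pi)^{p/2}}\lambda_kv_k^{p-1}\Big)v_{k,A}\,dx .
\]
The perturbation term is discarded first: from $v_{k,A}\le v_k$ we get $v_k^{p-1}v_{k,A}\le v_k^p$, so by Lemma \ref{231} and \eqref{238} this contribution is $O\big(\lambda_k\int_\Omega v_k^p\,dx\big)=o_k(1)$.

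For the main exponential term I fix $L>1$ and split $\Omega$ into $B_{Lr_k}(x_k)$ and its complement. On $B_{Lr_k}(x_k)$, since $\psi_k\to1$ uniformly on $\overline{B_L}$ by \eqref{247} and $A>1$, we have $v_k=\gamma_k\psi_k>\gamma_k/A$ there for $k$ large, hence $v_{k,A}\equiv\gamma_k/A$ on that ball. Changing variables $x=x_k+r_ky$, writing $v_k(x_k+r_ky)=\gamma_k\psi_k(y)$ and $e^{v_k^2(x_k+r_ky)}=e^{\gamma_k^2}e^{\gamma_k^2(\psi_k^2-1)}$, and using the normalization \eqref{3.5} in the form $\tfrac{4\pi r_k^2\gamma_ke^{\gamma_k^2}}{E_k}=\tfrac{4}{\gamma_k}$, I obtain
\[
\frac{4\pi}{E_k}\int_{B_{Lr_k}(x_k)}v_ke^{v_k^2}v_{k,A}\,dx=\frac{4}{A}\int_{B_L}\psi_ke^{\gamma_k^2(\psi_k^2-1)}\,dy .
\]
Since $\gamma_k^2(\psi_k^2-1)=\phi_k(1+\psi_k)\to2\phi_\infty$ and $0\le\psi_ke^{\gamma_k^2(\psi_k^2-1)}\le\psi_k\le1$, dominated convergence together with \eqref{214} shows this tends to $\frac{4}{A}\int_{B_L}e^{2\phi_\infty}\,dy$ as $k\to\infty$, and then to $\frac{4\pi}{A}$ as $L\to\infty$ by \eqref{add690}.

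For the exterior contribution, $v_{k,A}\le v_k$ gives
\[
\frac{4\pi}{E_k}\int_{\Omega\setminus B_{Lr_k}(x_k)}v_ke^{v_k^2}v_{k,A}\,dx\le\frac{4\pi}{E_k}\int_{\Omega\setminus B_{Lr_k}(x_k)}v_k^2e^{v_k^2}\,dx ,
\]
so it suffices to show $\lim_{L\to\infty}\limsup_{k\to\infty}E_k^{-1}\int_{\Omega\setminus B_{Lr_k}(x_k)}v_k^2e^{v_k^2}\,dx=0$. The same change of variables gives $E_k^{-1}\int_{B_{Lr_k}(x_k)}v_k^2e^{v_k^2}\,dx=\pi^{-1}\int_{B_L}\psi_k^2e^{\gamma_k^2(\psi_k^2-1)}\,dy$, which tends to $1$ as $k\to\infty$ and then $L\to\infty$; on the other hand the definition of $E_k$ together with \eqref{238} and Lemma \ref{231} yields $\int_\Omega v_k^2e^{v_k^2}\,dx=E_k+o_k(1)$, i.e.\ $E_k^{-1}\int_\Omega v_k^2e^{v_k^2}\,dx=1+o_k(1)$, and subtracting proves the concentration estimate.

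Combining the three pieces: for each fixed $L$ take $\limsup_{k\to\infty}$ and $\liminf_{k\to\infty}$ of the displayed identity for $\int_\Omega|\nabla v_{k,A}|^2\,dx$, using that the exterior term is nonnegative and bounded above by the vanishing quantity and that the perturbation term vanishes; letting $L\to\infty$ squeezes both one-sided limits to $\frac{4\pi}{A}$, which is the claim. The only mildly technical ingredient is the concentration estimate for $v_k^2e^{v_k^2}$, but this is pure bookkeeping once the limit profiles $\psi_k\to1$, $\phi_k\to\phi_\infty$ and the normalization \eqref{3.5} are in hand; I do not foresee an essential obstacle here.
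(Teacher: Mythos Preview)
Your argument is correct. The lower bound and the treatment of the perturbation term are identical to the paper's, but for the upper bound you take a genuinely different route: the paper introduces the complementary truncation $(v_k-\gamma_k/A)^+$, applies the same one-sided profile argument to it to get $\|\nabla(v_k-\gamma_k/A)^+\|_{L^2}^2\ge 4\pi(1-1/A)+o_k(1)$, and then uses the orthogonal decomposition $\|\nabla v_{k,A}\|_{L^2}^2+\|\nabla(v_k-\gamma_k/A)^+\|_{L^2}^2=4\pi$ to force the upper bound. You instead bound the exterior contribution directly via the mass-concentration identity $E_k^{-1}\int_\Omega v_k^2 e^{v_k^2}\,dx=1+o_k(1)$ combined with the interior profile computation. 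Both routes use exactly the same ingredients (\eqref{3.5}, \eqref{247}, \eqref{214}, \eqref{238}, Lemma~\ref{231}); the paper's version is slightly more symmetric (one estimate applied twice), while yours avoids introducing the second truncation at the cost of an easy bookkeeping step.
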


\begin{proof}
Testing the equation (\ref{eq of vn}) by $v_{k,A}$ and using Green's formula,
for any $R>0$, one has
\begin{align*}
\int_{\Omega}\left\vert \nabla v_{k,A}\right\vert ^{2}dx &  =\int_{\Omega
}v_{k,A}\frac{{{4\pi}}}{{{E_{k}}}}\left(  {{v_{k}}{e^{v_{k}^{2}}}-\frac
{p}{{2{{\left(  {4\pi}\right)  ^{\frac{p}{2}}}}}}\lambda_{k}v_{k}^{p-1}%
}\right)  dx\\
&  =\int_{\Omega}\frac{{{4\pi}}}{{{E_{k}}}}v_{k,A}{{v_{k}}{e^{v_{k}^{2}}}%
}dx+o_{k}\left(  1\right)  \\
&  \geq\int_{B_{Rr_{k}}\left(  x_{k}\right)  }\frac{{{4\pi}}}{{{E_{k}}}%
}v_{k,A}{{v_{k}}{e^{v_{k}^{2}}}}dx+o_{k}\left(  1\right)  \\
&  =\frac{1}{A}\int_{B_{R}\left(  0\right)  }4e^{2\phi_{\infty}}%
dx+o_{k,R}\left(  1\right)  \text{.}%
\end{align*}
where in the second equality we have used Lemma \ref{231} and (\ref{238}).
Letting $R\rightarrow\infty$, we derive that
\begin{equation}
\lim_{k\rightarrow\infty}\int_{\Omega}\left\vert \nabla v_{k,A}\right\vert
^{2}dx\geq\frac{4\pi}{A}.\label{add6103}%
\end{equation}
Using the same argument above, we can show that%
\[
\lim_{k\rightarrow\infty}{\int_{\Omega}{\left\vert {\nabla\left(  {{v_{k}}%
-}\frac{\gamma_{k}}{A}\right)  }^{+}\right\vert }^{2}}dx\geq4\pi-\frac{{4\pi}%
}{A}.
\]
Note that
\[
{\int_{\Omega}{\left\vert \nabla v_{k,A}\right\vert }^{2}}dx+{\int_{\Omega
}{\left\vert {\nabla\left(  {{v_{k}}-}\frac{\gamma_{k}}{A}\right)  }%
^{+}\right\vert }^{2}}dx=4\pi,
\]
hence we conclude that
\begin{equation}
\lim_{k\rightarrow\infty}\int_{\Omega}\left\vert \nabla v_{k,A}\right\vert
^{2}dx\leq\frac{4\pi}{A}.\label{add6104}%
\end{equation}

Combining (\ref{add6103}) and (\ref{add6104}), we accomplish the proof.
\end{proof}

\begin{lemma}
\label{le4.5} There holds
\[
\lim\limits_{k\rightarrow\infty}\frac{E_{k}}{\gamma_{k}^{2}}=S_{\Omega
}^{\delta}-|\Omega|\text{.}%
\]

\end{lemma}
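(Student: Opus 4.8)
The plan is to compute $\lim_{k\to\infty} E_k/\gamma_k^2$ by splitting the integral $E_k = \int_\Omega (v_k^2 e^{v_k^2} - \frac{p}{2(4\pi)^{p/2}}\lambda_k v_k^p)\,dx$ into a piece near the concentration point and a piece away from it, and matching each piece against the two competing contributions to $S_\Omega^\delta - |\Omega|$ (the ``bubble energy'' $\pi e\, r_\Omega^2(y_0)$ and the vanishing behavior). First I would dispose of the Lebesgue perturbation term: by \eqref{238} we have $\lambda_k \int_\Omega v_k^p\,dx = o_k(1)$, so $\frac{p}{2(4\pi)^{p/2}}\lambda_k \int_\Omega v_k^p\,dx = o_k(1)$, and since $\gamma_k\to\infty$ this contributes $o_k(1)/\gamma_k^2 \to 0$. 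Hence $E_k/\gamma_k^2 = \gamma_k^{-2}\int_\Omega v_k^2 e^{v_k^2}\,dx + o_k(1)$, and the whole problem reduces to computing $\lim_k \gamma_k^{-2}\int_\Omega v_k^2 e^{v_k^2}\,dx$.

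Next I would handle the region $\Omega\setminus B_{Rr_k}(x_k)$ using the truncation $v_{k,A} = \min\{\gamma_k/A, v_k\}$ and Lemma~\ref{le4.4}. On $\{v_k \le \gamma_k/A\}$ one has $v_k^2 e^{v_k^2} \le (\gamma_k/A)^2 e^{v_k^2}$, and since $\|\nabla v_{k,A}\|_{L^2}^2 \to 4\pi/A$ is subcritical for $A>1$, the Trudinger-Moser inequality applied to $v_{k,A}$ forces $\int_\Omega e^{q v_k^2}\,dx$ to stay bounded on this set for some $q>1$ uniformly, whence $\gamma_k^{-2}\int_{\{v_k\le\gamma_k/A\}} v_k^2 e^{v_k^2}\,dx = O(A^{-2})$. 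Letting $A\to\infty$ later, this piece is negligible; the essential mass sits where $v_k$ is comparable to $\gamma_k$. On the complementary ``high'' region, I would split further: the intermediate annulus where $v_k$ is between, say, $1$ and $\gamma_k/A$ but outside the bubble, and the core bubble $B_{Rr_k}(x_k)$. In the core, the rescaling gives $\gamma_k^{-2}\int_{B_{Rr_k}(x_k)} v_k^2 e^{v_k^2}\,dx = \frac{E_k}{\gamma_k^2}\cdot\frac{1}{E_k}\int_{B_{Rr_k}(x_k)} v_k^2 e^{v_k^2}\,dx$, and by \eqref{247}, \eqref{214}, \eqref{3.6}, \eqref{add690}, $\frac{4\pi}{E_k}\int_{B_{Rr_k}(x_k)} v_k^2 e^{v_k^2}\,dx \to \int_{B_R} 4 e^{2\phi_\infty}\,dx \to 4\pi$ as $R\to\infty$; this yields the bubble contributing $\pi e\, r_\Omega^2(y_0)$ to $E_k/\gamma_k^2$ in the limit, via the standard relation tying $r_k$, $E_k$, and the Robin function. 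Combined with $\int_\Omega e^{v_k^2}\,dx \to S_\Omega^\delta = |\Omega| + \pi e\, r_\Omega^2(y_0)$ from \eqref{vanish pe} and Lemma~\ref{lem12}, the $|\Omega|$ comes from the low region where $e^{v_k^2}\to 1$ and the rest from the bubble.

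Concretely, I would organize the argument around the inequality $S_\Omega^\delta \le \int_\Omega e^{v_k^2}\,dx \le |\Omega| + o_k(1) + \int_{\{v_k>1\}} v_k^2 e^{v_k^2}\,dx$ (as already used in Lemma~\ref{231}), which combined with the reverse estimate gives $\int_{\{v_k>1\}} v_k^2 e^{v_k^2}\,dx = S_\Omega^\delta - |\Omega| + o_k(1)$. Then, since on $\{v_k>1\}$ we have $v_k^2 \le \gamma_k^2 \psi_k^2 \le \gamma_k^2$, dividing would naively only give an upper bound; the point is to show the mass does not spread out, i.e. that $\gamma_k^{-2}\int_{\{v_k>1\}\setminus B_{Rr_k}(x_k)} v_k^2 e^{v_k^2}\,dx = o_R(1)$, so that essentially all the $e^{v_k^2}$-mass above $|\Omega|$ concentrates in the bubble where $v_k \sim \gamma_k$. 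For that I would use the subcritical-truncation estimate above together with a dyadic decomposition in $A$, showing $\int_{\{\gamma_k/(j+1) < v_k \le \gamma_k/j\}} v_k^2 e^{v_k^2}\,dx$ is controlled by $(\gamma_k/j)^2$ times a bounded exponential integral, and these sum to $o(\gamma_k^2)$.

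The main obstacle I anticipate is making rigorous the claim that the exponential mass concentrates entirely in the bubble, i.e. ruling out ``neck'' energy in the region $Rr_k \le |x-x_k| \le \delta$ at the level of the weighted integral $\gamma_k^{-2}\int v_k^2 e^{v_k^2}$. This requires precise pointwise control of $v_k$ on the neck — exactly the kind of estimate the introduction flags as delicate because of the lack of radial symmetry and the sign-changing perturbation, to be supplied by the comparison principle (Lemma~\ref{507}) and the pointwise bounds (Lemmas~\ref{90}, \ref{85}). Granting those, the bookkeeping is: low region $\to |\Omega|$ in $\int e^{v_k^2}$ and $\to 0$ in $\gamma_k^{-2}\int v_k^2 e^{v_k^2}$; neck $\to 0$ in both (after dividing by $\gamma_k^2$); bubble $\to \pi e\, r_\Omega^2(y_0)$ in $\gamma_k^{-2}\int v_k^2 e^{v_k^2}$; adding up and using \eqref{vanish pe} gives $\lim_k E_k/\gamma_k^2 = S_\Omega^\delta - |\Omega|$.
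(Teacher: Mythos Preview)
Your plan has a genuine circularity problem and misses the simple trick the paper uses.

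First, the circularity. You propose to control the ``neck'' region by invoking the comparison principle (Lemma~\ref{507}) and the pointwise estimates (Lemmas~\ref{90}, \ref{85}). But those lemmas are proved \emph{after} Lemma~\ref{le4.5} and explicitly use it (Lemma~\ref{52} and Lemma~\ref{85} both invoke Lemma~\ref{le4.5} to control $E_k/\gamma_k^2$). So your neck estimate would be circular. Likewise, your claim that the bubble contributes exactly $\pi e\, r_\Omega^2(y_0)$ ``via the standard relation tying $r_k$, $E_k$, and the Robin function'' is not available: the only relation you have is the \emph{definition} $r_k^2 = \frac{E_k}{\pi}\gamma_k^{-2}e^{-\gamma_k^2}$, and any statement linking $r_k$ to $r_\Omega(y_0)$ is equivalent to the very limit you are trying to prove. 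Your bubble computation $\gamma_k^{-2}\int_{B_{Rr_k}} v_k^2 e^{v_k^2} = \frac{E_k}{\gamma_k^2}\cdot\frac{1}{E_k}\int_{B_{Rr_k}} v_k^2 e^{v_k^2}$ just re-expresses the unknown in terms of itself.

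The paper's argument avoids all of this by never attempting to compute $\gamma_k^{-2}\int_\Omega v_k^2 e^{v_k^2}\,dx$ region by region. Instead it sandwiches this quantity against $\int_\Omega e^{v_k^2}\,dx$, whose limit $S_\Omega^\delta$ is already known from \eqref{vanish pe}. For the lower bound on $E_k/\gamma_k^2$: split $\int_\Omega e^{v_k^2}$ into $\{v_k < \gamma_k/A\}$ and $\{v_k \ge \gamma_k/A\}$; on the first set use $e^{v_k^2}\le e^{v_{k,A}^2}$ and $\int_\Omega e^{v_{k,A}^2}\to |\Omega|$ (subcritical energy from Lemma~\ref{le4.4}); on the second use the pointwise bound $e^{v_k^2}\le (A/\gamma_k)^2 v_k^2 e^{v_k^2}$. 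This gives $S_\Omega^\delta \le |\Omega| + A^2 \limsup_k E_k/\gamma_k^2$, and let $A\downarrow 1$. For the upper bound: simply observe $\int_\Omega(e^{v_k^2}-1)\,dx \ge \int_{B_{Rr_k}(x_k)} e^{v_k^2}\,dx$, rescale the right side to get $\frac{E_k}{\gamma_k^2}\cdot\frac{1}{\pi}\int_{B_R} e^{2\phi_\infty}\,dx$, and let $R\to\infty$. No neck control, no pointwise estimates, no dyadic decomposition needed.
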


\begin{proof}
We divide the proof in two steps. First, we prove
\begin{equation}
\underset{k\rightarrow\infty}{\lim\inf}\frac{E_{k}}{\gamma_{k}^{2}}\geq
S_{\Omega}^{\delta}-|\Omega|\text{.}\label{218}%
\end{equation}
Due to (\ref{eq of vn}) and (\ref{238}), it follows that%
\begin{align*}
\underset{k\rightarrow\infty}{\lim}\int_{\Omega}e^{v_{k}^{2}}dx &
=\underset{k\rightarrow\infty}{\lim}\int_{\{v_{k}<\frac{\gamma_{k}}{A}%
\}}e^{v_{k}^{2}}dx+\underset{k\rightarrow\infty}{\lim}\int_{\{v_{k}\geq
\frac{\gamma_{k}}{A}\}}e^{v_{k}^{2}}dx\\
&  \leq\underset{k\rightarrow\infty}{\lim}\int_{\Omega}e^{v_{k,A}^{2}%
}dx+\underset{k\rightarrow\infty}{\lim}\frac{A^{2}}{\gamma_{k}^{2}}%
\int_{\Omega}v_{k}^{2}e^{v_{k}^{2}}dx\\
&  =\left\vert \Omega\right\vert +\underset{k\rightarrow\infty}{\lim}%
\frac{A^{2}}{\gamma_{k}^{2}}\left(  E_{k}+\frac{p}{2\left(  4\pi\right)
^{\frac{p}{2}}}\lambda_{k}\int_{\Omega}v_{k}^{p}dx\right)  \\
&  =\left\vert \Omega\right\vert +\underset{k\rightarrow\infty}{\lim}%
\frac{A^{2}}{\gamma_{k}^{2}}E_{k}\text{.}%
\end{align*}
Letting $A\rightarrow1$, we obtain that (\ref{218}) holds from
(\ref{vanish pe}).

It remains to show that%
\begin{equation}
\underset{k\rightarrow\infty}{\lim\sup}\frac{E_{k}}{\gamma_{k}^{2}}\leq
S_{\Omega}^{\delta}-|\Omega|.\label{222}%
\end{equation}
By a direct computation, we have
\begin{align*}
\underset{k\rightarrow\infty}{\lim}\int_{\Omega}\left(  e^{v_{k}^{2}%
}-1\right)  dx &  \geq\underset{R\rightarrow\infty}{\lim}\underset
{k\rightarrow\infty}{\lim}\int_{B_{Rr_{k}}\left(  x_{k}\right)  }\left(
e^{v_{k}^{2}}-1\right)  dx\\
&  =\underset{R\rightarrow\infty}{\lim}\underset{k\rightarrow\infty}{\lim}%
\int_{B_{Rr_{k}}\left(  x_{k}\right)  }e^{v_{k}^{2}}dx\\
&  =\underset{R\rightarrow\infty}{\lim}\underset{k\rightarrow\infty}{\lim
}\frac{E_{k}}{\gamma_{k}^{2}}\frac{1}{\pi}\int_{B_{R}\left(  0\right)
}e^{2\phi_{\infty}}dx\\
&  =\underset{k\rightarrow\infty}{\lim}\frac{E_{k}}{\gamma_{k}^{2}}\text{,}%
\end{align*}
which implies that (\ref{222}) holds true. Thus we complete the proof.
\end{proof}

\begin{remark}\label{corresp}
Based on the argument for Lemma \ref{le4.5}, we can conclude that the concentration point $y_{0}$ is precisely the blow-up point $x_{0}$.

\end{remark}

\begin{lemma}
\label{le4.6} For any $\eta\in C^{\infty}(\Omega)$, we have
\begin{equation*}
\lim_{k\rightarrow\infty}\frac{1}{E_{k}}\int_{\Omega}\gamma_{k}v_{k}%
e^{v_{k}^{2}}\eta dx=\eta(x_{0}).%
\end{equation*}

\end{lemma}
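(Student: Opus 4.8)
The plan is to show that the measure $\mu_k := \frac{1}{E_k}\gamma_k v_k e^{v_k^2}\,dx$ converges weakly-$*$ to the Dirac mass $\delta_{x_0}$, and then integrate against the bounded continuous function $\eta$. The first step is to record the total mass: since $E_k = \int_\Omega v_k^2 e^{v_k^2}\,dx - \frac{p}{2(4\pi)^{p/2}}\lambda_k\int_\Omega v_k^p\,dx$, and the Lebesgue perturbation term is $o_k(1)$ by \eqref{238} while $E_k$ stays bounded below by Lemma \ref{231}, I would first argue that $\frac{1}{E_k}\int_\Omega \gamma_k v_k e^{v_k^2}\,dx \to 1$. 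This requires comparing $\int \gamma_k v_k e^{v_k^2}$ with $\int v_k^2 e^{v_k^2}$; on the region where $v_k$ is close to $\gamma_k$ (which carries all the mass of $E_k$, by the concentration established in Lemmas \ref{le4.4}--\ref{le4.5}) the two integrands agree to leading order, whereas on $\{v_k \le \gamma_k/A\}$ the integral $\int \gamma_k v_k e^{v_k^2}$ is controlled by $\frac{A}{\gamma_k}\cdot\text{(something bounded)}\cdot\gamma_k = O_A(1)$ relative to $E_k\sim \gamma_k^2(S_\Omega^\delta-|\Omega|)$, hence negligible after letting $A\to 1$. A cleaner route: test the equation \eqref{eq of vn} against $v_{k,A}$ exactly as in Lemma \ref{le4.4}, which already gives $\frac1{E_k}\int \gamma_k v_{k,A} v_k e^{v_k^2} \cdot \frac{1}{\gamma_k}\cdot\gamma_k$-type expressions; combined with Lemma \ref{le4.4} this pins the total mass to $1$.

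The second step is to localize the mass at $x_0$. For fixed $\varepsilon>0$, on $\Omega\setminus B_\varepsilon(x_0)$ we have $v_k \to 0$ in, say, $C^0_{loc}(\Omega\setminus\{x_0\})$ and in $L^q$ for every $q$ (since $\{u_k\}$ is a (NCS) at $y_0=x_0$ by Lemma \ref{232} and Remark \ref{corresp}), and moreover $u_k e^{4\pi u_k^2}$ is bounded in $L^q(\Omega\setminus B_\varepsilon(x_0))$ for some $q>1$ by the Trudinger-Moser inequality applied away from the concentration point, exactly as in the proof of Lemma \ref{232}. Therefore $\frac{1}{E_k}\int_{\Omega\setminus B_\varepsilon(x_0)} \gamma_k v_k e^{v_k^2}\,dx \le \frac{C\gamma_k}{E_k}\|v_k e^{v_k^2}\|_{L^1(\Omega\setminus B_\varepsilon(x_0))} = O(\gamma_k/E_k) = O(1/\gamma_k)\to 0$, using $E_k\sim \gamma_k^2(S_\Omega^\delta - |\Omega|)$ from Lemma \ref{le4.5}. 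Combining with Step 1, the mass of $\mu_k$ on $B_\varepsilon(x_0)$ tends to $1$ for every $\varepsilon>0$, which is precisely weak-$*$ convergence $\mu_k \rightharpoonup \delta_{x_0}$.

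The conclusion then follows: for $\eta\in C^\infty(\Omega)$ (bounded, continuous at $x_0$), write
\[
\frac{1}{E_k}\int_\Omega \gamma_k v_k e^{v_k^2}\eta\,dx = \eta(x_0)\,\mu_k(\Omega) + \frac{1}{E_k}\int_\Omega \gamma_k v_k e^{v_k^2}\bigl(\eta - \eta(x_0)\bigr)\,dx,
\]
and split the last integral over $B_\varepsilon(x_0)$ and its complement: on $B_\varepsilon(x_0)$ it is bounded by $\sup_{B_\varepsilon(x_0)}|\eta-\eta(x_0)|\cdot\mu_k(\Omega) = o_\varepsilon(1)$ uniformly in $k$, and on the complement it is $O(1/\gamma_k)\|\eta\|_\infty$ by Step 2; letting $k\to\infty$ and then $\varepsilon\to 0$ gives the claim. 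The main obstacle is Step 1 — establishing that the total mass is exactly $1$ rather than some constant between $0$ and $1$ — because the perturbation term, though $o_k(1)$ in the $L^p$-sense, is \emph{not} sign-definite and must be shown not to leak mass; the safest argument is the one tied to Lemma \ref{le4.4} (testing against $v_{k,A}$ and letting $A\to1$), together with the fact that on the concentration scale $B_{Rr_k}(x_k)$ one has $\gamma_k v_k e^{v_k^2}\,dx \to \frac{E_k}{\gamma_k^2}\cdot\frac{4}{\pi}\,e^{2\phi_\infty}\,\gamma_k^2\,\frac{dy}{4}$-type normalization converging to the full mass $\frac{1}{\pi}\int_{\mathbb{R}^2} 4 e^{2\phi_\infty}\cdot\frac14 = 1$ after using \eqref{add690}.
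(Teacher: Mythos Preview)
Your proof is correct and shares the core ingredients with the paper's argument---the level-set split $\{v_k>\gamma_k/A\}$ versus $\{v_k\le\gamma_k/A\}$, the Trudinger--Moser bound from Lemma~\ref{le4.4} on the low set, and the blow-up computation on $B_{Rr_k}(x_k)$---but the organization differs. The paper does a single three-region split
\[
\Omega = B_{Rr_k}(x_k)\;\cup\;\bigl(\{v_k>\tfrac{\gamma_k}{A}\}\setminus B_{Rr_k}(x_k)\bigr)\;\cup\;\bigl(\{v_k\le\tfrac{\gamma_k}{A}\}\setminus B_{Rr_k}(x_k)\bigr)
\]
and estimates each piece of $\frac{1}{E_k}\int\gamma_k v_k e^{v_k^2}\eta\,dx$ directly: the ball yields $\eta(x_0)$ via the rescaled limit $\phi_\infty$; on the high set outside the ball one uses $\gamma_k v_k\le A v_k^2$ together with the fact that $\frac{1}{E_k}\int_\Omega v_k^2 e^{v_k^2}=1+o_k(1)$ is already exhausted by $B_{Rr_k}$; and the low set is killed by H\"older and Lemma~\ref{le4.4}. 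Your route through weak-$*$ convergence of measures, with Step~2 invoking the NCS property on $\Omega\setminus B_\varepsilon(x_0)$ via Remark~\ref{corresp}, is valid but slightly less economical: the paper never needs $y_0=x_0$ or any $L^q$ bound on a fixed exterior region, since the level-set decomposition alone already localizes all the mass to the shrinking ball $B_{Rr_k}(x_k)$. Your Step~1 is also the place where your write-up is loosest; the clean version is exactly the paper's sandwich $v_k^2\le\gamma_k v_k\le A v_k^2$ on $\{v_k>\gamma_k/A\}$ combined with the low-set H\"older estimate, which you gesture at but do not fully carry out.
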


\begin{proof}
For any \thinspace$A>1$, we divide $\Omega$ into three parts:
\[
\Omega=\left(  {\left\{  {{v_{k}}>\frac{\gamma_{k}}{A}}\right\}
\backslash{B_{R{r}_{k}}}\left(  {{x_{k}}}\right)  }\right)  \cup\left(
{\left\{  {{v_{k}\leq}\frac{\gamma_{k}}{A}}\right\}  \backslash{B_{R{r}_{k}}%
}\left(  {{x_{k}}}\right)  }\right)  \cup{{B_{R{r}_{k}}}\left(  {{x_{k}}%
}\right)  }.
\]
Hence the integral $\frac{1}{E_{k}}\int_{\Omega}\gamma_{k}v_{k}e^{v_{k}^{2}%
}\eta dx$ can be rewritten as%
\begin{align*}
\frac{1}{E_{k}}\int_{\Omega}\gamma_{k}v_{k}e^{v_{k}^{2}}\eta dx &  =\frac
{1}{E_{k}}\int_{{\left\{  {{v_{k}}>\frac{\gamma_{k}}{A}}\right\}
\backslash{B_{R{r}_{k}}}\left(  {{x_{k}}}\right)  }}\gamma_{k}v_{k}%
e^{v_{k}^{2}}\eta dx+\frac{1}{E_{k}}\int_{{{B_{R{r}_{k}}}\left(  {{x_{k}}%
}\right)  }}\gamma_{k}v_{k}e^{v_{k}^{2}}\eta dx\\
&  +\frac{1}{E_{k}}\int_{{\left\{  {{v_{k}}\leq\frac{\gamma_{k}}{A}}\right\}
\backslash{B_{R{r}_{k}}}\left(  {{x_{k}}}\right)  }}\gamma_{k}v_{k}%
e^{v_{k}^{2}}\eta dx\\
&  :=I_{1}^{k,R}+I_{2}^{k,R}+I_{3}^{k,R}\text{.}%
\end{align*}
For $I_{1}^{k,R}$, we derive that%
\begin{align*}
I_{1}^{k,R} &  \leq A\underset{\Omega}{\sup}\left\vert \eta\right\vert
\int_{{\Omega\backslash{B_{R{r}_{k}}}\left(  {{x_{k}}}\right)  }}\frac
{1}{E_{k}}v_{k}^{2}e^{v_{k}^{2}}dx\\
&  \leq A\underset{\Omega}{\sup}\left\vert \eta\right\vert \left(
1-\int_{{{B_{R{r}_{k}}}\left(  {{x_{k}}}\right)  }}\frac{1}{E_{k}}v_{k}%
^{2}e^{v_{k}^{2}}dx\right)  \\
&  \leq A\underset{\Omega}{\sup}\left\vert \eta\right\vert \left(  1-\frac
{1}{\pi}\int_{{{B}}_{R}\left(  0\right)  }e^{2\phi_{\infty}+o_{k}\left(
1\right)  }dy\right)  \text{.}%
\end{align*}
Thus, $\underset{R\rightarrow\infty}{\lim}\underset{k\rightarrow\infty}{\lim
}I_{1}^{k,R}=0$. A careful calculation for $I_{2}^{k,R}$ gives
\begin{align*}
I_{2}^{k,R} &  =\frac{1}{E_{k}}\int_{{{B_{R{r}_{k}}}\left(  {{x_{k}}}\right)
}}\gamma_{k}v_{k}e^{v_{k}^{2}}\eta dx\\
&  =\int_{{{B}}_{R}\left(  0\right)  }\frac{1}{\pi}\eta\left(  x_{k}%
+r_{k}y\right)  e^{2\phi_{\infty}+o_{k}\left(  1\right)  }dy\\
&  =\eta\left(  x_{k}\right)  \int_{{{B}}_{R}\left(  0\right)  }\frac{1}{\pi
}e^{2\phi_{\infty}}dy+o_{k}\left(  1\right)  \text{,}%
\end{align*}
which implies $\underset{R\rightarrow\infty}{\lim}\underset{k\rightarrow
\infty}{\lim}I_{2}^{k,R}=\eta\left(  x_{0}\right)  $. By Lemma \ref{le4.4}, we
have%
\[
\int_{\Omega}e^{qv_{k,A}^{2}}dx<C
\]
for any $1<q<A$. Using Holder's inequality, we have
\begin{align*}
I_{3}^{k,R} &  \leq\underset{\Omega}{\sup}\left\vert \eta\right\vert
\frac{\gamma_{k}}{E_{k}}\int_{{\left\{  {{v_{k}}\leq\frac{\gamma_{k}}{A}%
}\right\}  \backslash{B_{R{r}_{k}}}\left(  {{x_{k}}}\right)  }}v_{k}%
e^{v_{k}^{2}}dx\\
&  \leq\underset{\Omega}{\sup}\left\vert \eta\right\vert \frac{\gamma_{k}%
}{E_{k}}\int_{\Omega}v_{k}e^{v_{k,A}^{2}}dx\leq\underset{\Omega}{\sup
}\left\vert \eta\right\vert \frac{\gamma_{k}}{E_{k}}\left\vert \left\vert
v_{k}\right\vert \right\vert _{L^{\frac{q}{q-1}}\left(  \Omega\right)
}\left\vert \left\vert e^{v_{k,A}^{2}}\right\vert \right\vert _{L^{q}\left(
\Omega\right)  }\rightarrow0\text{,}%
\end{align*}
as $k\rightarrow\infty$, where in the last inequality we have used Lemma
\ref{le4.5}. Then Lemma \ref{le4.6} follows immediately from the estimates for
$I_{1}^{k,R}$, $I_{2}^{k,R}$ and $I_{3}^{k,R}$.
\end{proof}

Below, we can demonstrate the convergence of $\{\gamma_{k}v_{k}\}$. Since this proof of convergence is analogous to that in \cite{Has}, we just present the main results here, the detailed proof provided in the Appendix.

\begin{proposition}
\label{pro4.9} For any $q\in\lbrack1,\infty)$, we have%
\[
\int_{\Omega}\left(  \gamma_{k}v_{k}\right)  ^{q}dx\rightarrow\int_{\Omega
}G_{\Omega,x_{0}}^{q}dx
\]
as $k\rightarrow\infty$, where $G_{\Omega,x_{0}}$ is the Green function of
Laplacian operator with the singularity at $x_{0}$.
\end{proposition}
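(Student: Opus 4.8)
The plan is to establish the convergence $\gamma_k v_k \to G_{\Omega,x_0}$ in an appropriate sense and then deduce the $L^q$-convergence of the statement. First I would show that the rescaled functions $w_k := \gamma_k v_k$ satisfy the equation
\[
-\Delta w_k = \frac{4\pi}{E_k}\left(\gamma_k v_k e^{v_k^2} - \frac{p}{2(4\pi)^{p/2}}\gamma_k\lambda_k v_k^{p-1}\right)\quad\text{in }\Omega,\qquad w_k = 0\text{ on }\partial\Omega.
\]
By Lemma~\ref{le4.6}, the first term on the right-hand side converges, when tested against any $\eta\in C^\infty(\overline{\Omega})$, to $\eta(x_0)$; that is, $\frac{4\pi}{E_k}\gamma_k v_k e^{v_k^2}\,dx \rightharpoonup 4\pi\,\delta_{x_0}$ weakly as measures. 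For the perturbation term, since $v_k\to 0$ strongly in every $L^q(\Omega)$ and $\gamma_k\lambda_k\int_\Omega v_k^{p-1}\,dx$ must be controlled: using (\ref{238}) together with $\lambda_k\int_\Omega v_k^p\,dx\to0$ and Hölder's inequality, one shows $\frac{\gamma_k\lambda_k}{E_k}\int_\Omega v_k^{p-1}|\eta|\,dx\to 0$ (here one needs $\gamma_k\lambda_k v_k^{p-1}$ to be integrable uniformly, which follows from the growth rate (\ref{rela}) $\lambda_k/c_k^p = o_k(1)$ reformulated appropriately, or directly from the mass bound $\lambda_k\int v_k^p = o_k(1)$ after splitting the domain near/far from $x_k$). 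Hence $-\Delta w_k \rightharpoonup 4\pi\,\delta_{x_0}$ weakly.

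Next I would invoke standard elliptic estimates: since the right-hand side of the $w_k$-equation is bounded in $L^1(\Omega)$ with mass tending to $4\pi$ and concentrating at $x_0$, the sequence $\{w_k\}$ is bounded in $W^{1,q}_0(\Omega)$ for every $q\in[1,2)$ (by the standard $L^1$-to-$W^{1,q}$ elliptic regularity for the Laplacian on a bounded domain), so after passing to a subsequence $w_k \to w$ weakly in $W^{1,q}_0(\Omega)$ and strongly in $L^q(\Omega)$ for all $q\in[1,\infty)$ by Sobolev embedding (here $W^{1,q}_0$ embeds compactly into $L^s$ for all finite $s$ when $q<2$ is close to $2$, or by iterating). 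The limit $w$ is a distributional solution of $-\Delta w = 4\pi\,\delta_{x_0}$ in $\Omega$ with $w=0$ on $\partial\Omega$, which by uniqueness is exactly $w = 4\pi\cdot\frac{1}{2\pi}$-normalized Green's function. Comparing with the normalization in Definition~\ref{rem1}, where $G_{\Omega,x_0}(y) = -\frac{1}{2\pi}\log|x_0-y| - H_{\Omega,x_0}(y)$ solves $-\Delta G_{\Omega,x_0} = \delta_{x_0}$, I get $w = 4\pi\,G_{\Omega,x_0}$... — at this point I must reconcile the constant. In fact the correct normalization should yield $w = G_{\Omega,x_0}$ directly: one checks that $-\Delta w_k$ has total mass $\frac{4\pi}{E_k}\int_\Omega \gamma_k v_k e^{v_k^2}\,dx$, and by testing with $\eta\equiv 1$ and Lemma~\ref{le4.6} this tends to $4\pi$, but the proper Green's function appearing in Proposition~\ref{lem2.2} is normalized so that the matching occurs; I would carefully track that the scaling $\gamma_k = 2\sqrt{\pi}c_k$ and $r_k$ in (\ref{3.5}) are chosen precisely so the limit is $G_{\Omega,x_0}$ (this is the normalization convention fixed by the blow-up profile $\phi_\infty$ and $\int_{\mathbb{R}^2}e^{2\phi_\infty} = \pi$). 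Since the limit is independent of the subsequence, the full sequence converges, and $\int_\Omega w_k^q\,dx \to \int_\Omega G_{\Omega,x_0}^q\,dx$ follows from the strong $L^q$-convergence.

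The main obstacle I anticipate is twofold. First, controlling the perturbation term $\frac{\gamma_k\lambda_k}{E_k}\int_\Omega v_k^{p-1}(\cdot)\,dx$ uniformly: because the term $v_k^{p-1}$ is not simply dominated by $v_k^p$ pointwise on the whole domain (for $p<2$, small values of $v_k$ make $v_k^{p-1}$ larger relatively), one must split $\Omega$ into the region where $v_k$ is bounded below and its complement, use $\lambda_k\int v_k^p = o_k(1)$ on the former and the smallness of the measure (or Hölder against the known bound on $e^{v_k^2}$-type quantities) on the latter; this is where the rate (\ref{rela}) is genuinely used. Second, the passage from weak $W^{1,q}$ convergence and the distributional PDE to the \emph{strong} $L^q$ statement for \emph{all} $q\in[1,\infty)$ requires either a direct $L^\infty_{loc}$ bound away from $x_0$ (via elliptic regularity, since away from $x_0$ the right-hand side is bounded in $L^s$ for $s>1$ once we know $e^{v_k^2}$ is bounded there in such spaces — which follows from the Trudinger-Moser inequality and $\|\nabla u_k\|_{L^2}\le 1$ concentrating at $x_0$) combined with the explicit singularity $|x-x_0|^{-0}$... actually the logarithmic singularity of $G_{\Omega,x_0}$ is in $L^q$ for every finite $q$, so dominated convergence applies once we have pointwise a.e. convergence plus a uniform $L^{q+1}$ bound; assembling these uniform bounds carefully near the boundary and near $x_0$ is the technical heart. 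Since the analogous argument is carried out in detail in \cite{Has}, I would follow that template, indicating the modifications forced by the $L^p$-perturbation (principally the estimates above) and relegating the routine parts to the Appendix as the authors do.
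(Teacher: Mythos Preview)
Your overall strategy matches the paper's: write the equation for $w_k=\gamma_k v_k$, show $-\Delta w_k\rightharpoonup 4\pi\delta_{x_0}$, use the $L^1$-to-$W^{1,\gamma}$ elliptic estimate ($\gamma<2$) to extract a weak limit $\omega$, identify $\omega=G_{\Omega,x_0}$, and deduce $L^q$-convergence by compact Sobolev embedding. The normalization worry you raise is a side issue.

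The genuine gap is in the control of the perturbation term $\dfrac{\gamma_k\lambda_k}{E_k}\int_\Omega v_k^{p-1}\eta\,dx$. Both mechanisms you propose fail at this stage of the argument. First, invoking (\ref{rela}) (equivalently (\ref{227}), $\lambda_k/\gamma_k^p\to0$) is circular: in the paper's logical order, (\ref{227}) is derived in Remark~\ref{888} \emph{from} Proposition~\ref{pro4.9}, not before it. Second, the ``mass bound plus splitting'' idea does not close: for $p=1$ one has $v_k^{p-1}\equiv 1$, so the term reduces to $\lambda_k\gamma_k/E_k\cdot|\Omega|\sim \lambda_k/\gamma_k$, and nothing known at this point forces $\lambda_k/\gamma_k\to0$; for $p\in(1,2]$, H\"older against $\lambda_k\int v_k^p=o_k(1)$ only yields $\frac{\lambda_k}{\gamma_k}\int v_k^{p-1}\lesssim \frac{\lambda_k^{1/p}}{\gamma_k}\,o_k(1)^{(p-1)/p}$, which is useless without a bound on $\lambda_k^{1/p}/\gamma_k$.

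The paper resolves this with a bootstrap you do not anticipate. For $p=1$ it argues by contradiction: if the weak limit $\omega$ were $0$, testing the equation against a generic $\eta$ forces $4\pi\eta(x_0)=\lim \frac{(4\pi)^{1/2}}{E_k}\frac{\lambda_k\gamma_k}{2}\int_\Omega\eta$, which is impossible; hence $\omega\not\equiv0$, and then $\lambda_k\int v_k=o_k(1)$ together with $\int\gamma_kv_k\to\int\omega\ne0$ gives $\lambda_k/\gamma_k\to0$. This already proves Proposition~\ref{pro4.9} and hence Theorem~\ref{th1.1} for $p=1$, producing a finite threshold $\lambda^*(1)$. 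For $p\in(1,2]$ the paper then compares with the $p=1$ problem: since $S_\Omega^\delta<S_\Omega(\lambda_k,p)\le S_\Omega\big(\lambda_k|\Omega|^{\frac1p-1}(\int u_k^p)^{1-\frac1p},\,1\big)$ by H\"older, Lemma~\ref{225} forces $\lambda_k(\int u_k^p)^{1-1/p}\le C$, whence $\lambda_k\int u_k^{p-1}\le C$ and the perturbation term is $O(1/\gamma_k)$. You should replace your perturbation argument with this two-step bootstrap.
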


\begin{remark}
\label{888} From (\ref{238}), it follows that as $k\rightarrow\infty$%
\[
o_{k}(1)=\lambda_{k}\int_{\Omega}v_{k}^pdx=\frac{\lambda_{k}}{\gamma_{k}^p}%
\int_{\Omega}(\gamma_{k}v_{k})^pdx=\frac{\lambda_{k}}{\gamma_{k}^p}\left[
\int_{\Omega}G^p_{\Omega,x_{0}}dx+o_{k}(1)\right]  \text{,}%
\]
which implies
\begin{equation}
\frac{\lambda_{k}}{\gamma_{k}^p}\rightarrow0\text{.}\label{227}%
\end{equation}
Form (\ref{227}) and using the standard elliptic regularity theory, we can
get
\begin{equation}
\gamma_{k}v_{k}\rightarrow G_{\Omega,x_{0}}\text{ in }C_{loc}^{1}\left(
\Omega\backslash x_{0}\right)  .\label{2271}%
\end{equation}

\end{remark}

By Proposition \ref{pro4.9} and Lemma \ref{le4.5}, we have the following
estimates  for Lebesgue perturbation in (\ref{14}).

\begin{proposition}
\label{223}There holds%
\begin{equation*}
\frac{p\lambda_{k}\left(  4\pi\right)  ^{1-\frac{p}{2}}}{2E_{k}}\int_{\Omega
}v_{k}^{p}dx=\frac{p\lambda_{k}\left(  4\pi\right)  ^{1-\frac{p}{2}}}{2\left(
S_{\Omega}^{\delta}-\left\vert \Omega\right\vert \right)  \gamma_{k}^{p+2}%
}\left(  \int_{\Omega}G_{\Omega,x_{0}}^{p}dx+o_{k}\left(  1\right)  \right)
,%
\end{equation*}
as $k\rightarrow\infty$.
\end{proposition}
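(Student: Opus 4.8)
The statement is essentially a direct consequence of combining Proposition \ref{pro4.9} (the $L^q$ convergence $\int_\Omega(\gamma_k v_k)^q\,dx\to\int_\Omega G_{\Omega,x_0}^q\,dx$) with Lemma \ref{le4.5} (the Dirichlet-energy asymptotics $E_k/\gamma_k^2\to S_\Omega^\delta-|\Omega|$), so the plan is to simply chase these two ingredients through the algebraic identity. First I would write $\int_\Omega v_k^p\,dx=\gamma_k^{-p}\int_\Omega(\gamma_k v_k)^p\,dx$, and invoke Proposition \ref{pro4.9} with $q=p$ to replace $\int_\Omega(\gamma_k v_k)^p\,dx$ by $\int_\Omega G_{\Omega,x_0}^p\,dx+o_k(1)$. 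This turns the left-hand side $\frac{p\lambda_k(4\pi)^{1-p/2}}{2E_k}\int_\Omega v_k^p\,dx$ into
\[
\frac{p\lambda_k(4\pi)^{1-p/2}}{2E_k\,\gamma_k^{p}}\left(\int_\Omega G_{\Omega,x_0}^p\,dx+o_k(1)\right).
\]

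Next I would substitute the asymptotic expansion of $E_k$. Writing $E_k=\gamma_k^2\bigl(S_\Omega^\delta-|\Omega|+o_k(1)\bigr)$ from Lemma \ref{le4.5}, we have $E_k\gamma_k^p=\gamma_k^{p+2}\bigl(S_\Omega^\delta-|\Omega|+o_k(1)\bigr)$, and hence
\[
\frac{1}{E_k\gamma_k^p}=\frac{1}{\gamma_k^{p+2}\left(S_\Omega^\delta-|\Omega|\right)}\bigl(1+o_k(1)\bigr).
\]
Plugging this in and absorbing the $o_k(1)$ factors into the bracketed term $\bigl(\int_\Omega G_{\Omega,x_0}^p\,dx+o_k(1)\bigr)$ — which is legitimate because $\int_\Omega G_{\Omega,x_0}^p\,dx$ is a fixed finite positive constant and $S_\Omega^\delta-|\Omega|>0$ by Lemma \ref{231} (or directly $S_\Omega^\delta=|\Omega|+\pi e\sup_x r_\Omega^2(x)>|\Omega|$) — yields exactly the claimed identity
\[
\frac{p\lambda_k(4\pi)^{1-p/2}}{2E_k}\int_\Omega v_k^p\,dx=\frac{p\lambda_k(4\pi)^{1-p/2}}{2\left(S_\Omega^\delta-|\Omega|\right)\gamma_k^{p+2}}\left(\int_\Omega G_{\Omega,x_0}^p\,dx+o_k(1)\right).
\]

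There is no serious obstacle here; the only point requiring a little care is the bookkeeping of the error terms. Specifically, one must check that the product of $(1+o_k(1))$ arising from $1/E_k$ and the sum $\bigl(\int G_{\Omega,x_0}^p+o_k(1)\bigr)$ collapses back into a single $\bigl(\int G_{\Omega,x_0}^p+o_k(1)\bigr)$; this is immediate since $\bigl(1+o_k(1)\bigr)\bigl(c+o_k(1)\bigr)=c+o_k(1)$ whenever $c$ is a fixed constant, and the leading constants $S_\Omega^\delta-|\Omega|$ and $(4\pi)^{1-p/2}$ are nonzero and fixed. (The factor $\lambda_k$ is left untouched throughout and carried along verbatim, so it plays no role in the estimates — the growth of $\lambda_k$ is exploited only later, in conjunction with Remark \ref{888}.) Thus the proposition follows by direct substitution.
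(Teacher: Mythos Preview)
Your proposal is correct and takes essentially the same approach as the paper, which simply states that the proposition follows directly from Proposition~\ref{pro4.9} and Lemma~\ref{le4.5} without writing out the algebra. Your explicit bookkeeping of the $o_k(1)$ terms is a faithful unpacking of exactly those two ingredients.
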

In the subsequent analysis, our objective is to estimate the integral of the exponential term $I_E$.  To achieve this, we will partition the integral into two distinct regions: a small ball and the exterior region beyond this ball. The difficulty lies in estimating the integral within the
small ball around the blow up point.  Unlike the method employed in \cite{Has}, which benefits from radial symmetrization, our scenario is complicated by the lack of such symmetry. To surmount this challenge, we will adopt the strategy from \cite{Dru2}, which involves establishing a connection between $ v_k $ and certain radial functions that exhibit favorable asymptotic behavior. This strategy is based on the following point-wise estimate on the gradient of $v_k$.

\subsection{Point-wise estimates  on the gradient }
In this subsection, we aim to establish point-wise estimates  on the gradient of $v_k$, which is a solution of ( \ref{eq of vn} ). To this end, we first derive preliminary weak point-wise estimates  for $v_k$.

\begin{lemma}
\label{52}  There exists $C>0$ such that $\ $%
\[
\left\vert x-x_{k}\right\vert ^{2}\left\vert -\Delta v_{k}\right\vert
v_{k}\leq C\text{ in }\tilde{\Omega}_k\text{,}%
\]
where $\tilde{\Omega}_k=\Omega$ if $1\leq p<2$, and $\tilde{\Omega}_k=\{x\in\Omega \mid v_k(x)\geq \log(\gamma_k)\}$ if $p=2$.
\end{lemma}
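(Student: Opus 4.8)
\emph{Reduction.} Abbreviate $c_p=\frac{p}{2(4\pi)^{p/2}}$, so that \eqref{eq of vn} reads $-\Delta v_k=\frac{4\pi}{E_k}\big(v_ke^{v_k^2}-c_p\lambda_kv_k^{p-1}\big)$. By the triangle inequality,
\[
|x-x_k|^2\,|{-}\Delta v_k|\,v_k\ \le\ \frac{4\pi}{E_k}\,|x-x_k|^2\big(v_k^2e^{v_k^2}+c_p\lambda_kv_k^{p}\big),
\]
so it suffices to control separately, on $\tilde\Omega_k$, the exponential piece $\mathcal E_k:=\frac{4\pi}{E_k}|x-x_k|^2 v_k^2e^{v_k^2}$ and the Lebesgue piece $\mathcal P_k:=\frac{4\pi}{E_k}|x-x_k|^2 c_p\lambda_kv_k^{p}$. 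Throughout I will use that $E_k\ge c\gamma_k^2$ for large $k$ (a consequence of Lemma \ref{le4.5} together with $S_\Omega^\delta-|\Omega|>0$), that $v_k\le c_k=\gamma_k/(2\sqrt\pi)$, and that $\lambda_k/\gamma_k^{p}\to0$ (see \eqref{227}).

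\emph{The Lebesgue piece.} On the part of $\tilde\Omega_k$ where $v_k\ge\log\gamma_k$ the perturbation is pointwise negligible against the exponential term: there $e^{v_k^2}\ge\gamma_k^{\log\gamma_k}$, while $\lambda_kv_k^{p-2}\le\lambda_k=o(\gamma_k^{p})$ (using $p\le2$ and $v_k\ge1$), so $c_p\lambda_kv_k^{p-1}\le\tfrac12 v_ke^{v_k^2}$ for $k$ large, whence $\mathcal P_k\le\tfrac12\mathcal E_k$ on $\tilde\Omega_k\cap\{v_k\ge\log\gamma_k\}$; note that this is all of $\tilde\Omega_k$ when $p=2$. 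When $1\le p<2$ we must also treat $\{v_k<\log\gamma_k\}$, where I estimate crudely
\[
\mathcal P_k\ \le\ C\,\frac{\lambda_k}{\gamma_k^{2}}(\log\gamma_k)^{p}\ =\ C\,\frac{\lambda_k}{\gamma_k^{p}}\,\gamma_k^{\,p-2}(\log\gamma_k)^{p}\ \longrightarrow\ 0,
\]
since $\lambda_k/\gamma_k^{p}\to0$ and $p<2$. This crude bound is exactly what fails when $p=2$, which is the reason the estimate is asserted only on $\tilde\Omega_k$ in that case. In all cases, then, $\mathcal P_k\le\tfrac12\mathcal E_k+o(1)$ on $\tilde\Omega_k$, and the lemma reduces to proving $\sup_{\Omega}\mathcal E_k\le C$.

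\emph{The exponential estimate.} I would prove $\sup_\Omega\mathcal E_k\le C$ by a blow-up/rescaling argument in the spirit of \cite{Dru2,Man2,Dru}. Suppose, along a subsequence, that $T_k:=\sup_{x\in\Omega}\mathcal E_k(x)\to\infty$; the supremum is attained at some $y_k$ in the interior of $\Omega$ (it vanishes on $\partial\Omega$), and set $t_k:=|y_k-x_k|$. Using \eqref{2271} one first gets $t_k\to0$ (otherwise $v_k(y_k)=O(1/\gamma_k)$ and $T_k\to0$), then $v_k(y_k)\to\infty$ (otherwise $T_k=O(1)$), and then, evaluating $\mathcal E_k$ along the bubble expansion $\phi_k\to\phi_\infty=-\log(1+|x|^2)$ of \eqref{214}--\eqref{add690} — which gives $\mathcal E_k(x_k+r_ky)\to 4|y|^2(1+|y|^2)^{-2}\le1$ for each fixed $y$ — one deduces $t_k/r_k\to\infty$. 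Next, introduce the scale $s_k$ by $s_k^{2}\,\frac{4\pi}{E_k}\,v_k(y_k)^2e^{v_k(y_k)^2}=1$ (so $0<s_k\le t_k$ and $t_k/s_k=\sqrt{T_k}\to\infty$) and set $\widehat v_k(y):=v_k(y_k+s_ky)/v_k(y_k)$. On a ball of radius comparable to $t_k/2$ about $y_k$ the maximality of $y_k$ yields $v_k^2e^{v_k^2}\le 4v_k(y_k)^2e^{v_k(y_k)^2}$, hence $v_k\le 2v_k(y_k)$, i.e. $0<\widehat v_k\le2$ there, and the rescaled right-hand side is bounded — the exponential part by maximality, the perturbation part because $T_k\to\infty$ forces $v_k(y_k)^2\ge(1-o(1))\,2\log\gamma_k$, so that $\lambda_kv_k(y_k)^{p-2}e^{-v_k(y_k)^2}\to0$ (for $p=2$ this uses the sharper $v_k(y_k)\ge\log\gamma_k$ valid on $\tilde\Omega_k$). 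Elliptic estimates then give $\widehat v_k\to\widehat v$ in $C^1_{loc}(\mathbb R^2)$ with $\widehat v$ harmonic, bounded and $\widehat v(0)=1$, so $\widehat v\equiv1$; meanwhile the companion rescaling $\widehat\phi_k(y):=v_k(y_k)\big(v_k(y_k+s_ky)-v_k(y_k)\big)$ converges to a nontrivial solution $\widehat\phi_\infty$ of $-\Delta\widehat\phi_\infty=e^{2\widehat\phi_\infty}$ with finite mass (Chen--Li, as in \eqref{3.6}), i.e. a second Trudinger--Moser bubble carrying a definite amount of energy located at distance $\sim t_k$ from $x_k$. This contradicts the fact that, by $\|\nabla v_k\|_{L^2}^2=4\pi$ and $\lambda_k\int_\Omega v_k^{p}\to0$ (see \eqref{238}), the total mass $\int_\Omega\frac{4\pi}{E_k}v_k^2e^{v_k^2}=4\pi+o(1)$ is exhausted by the bubble at $x_k$ together with a neck carrying no concentrated energy at any intermediate scale. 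Hence $\sup_\Omega\mathcal E_k\le C$, which proves the lemma.

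\emph{Main difficulty.} The classical, unperturbed form of the exponential estimate is standard; the new issue is that the $L^p$-term destroys the sign and monotonicity of the nonlinearity and so must be tracked carefully through every rescaling. The point that makes this go through is that on $\{v_k\ge\log\gamma_k\}$ the term $v_ke^{v_k^2}$ dominates $\lambda_kv_k^{p-1}$ pointwise by an enormous margin; this both renders the perturbation harmless in the blow-up analysis and explains why, for $p=2$ — where no crude global bound on $\mathcal P_k$ is available — the estimate can only be asserted on $\tilde\Omega_k$. Justifying $v_k(y_k)^2\gtrsim2\log\gamma_k$ and propagating the smallness of $\lambda_kv_k^{p-2}e^{-v_k^2}$ through the two blow-up rescalings (and choosing the radius of the rescaling ball so as to stay inside $\tilde\Omega_k$ in the case $p=2$) is the delicate part of the argument.
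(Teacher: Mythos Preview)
Your approach is essentially that of the paper: argue by contradiction, locate a second concentration point $y_k$, rescale there to produce a second bubble, and obtain a contradiction since the two bubbles together exceed the total energy $\int_\Omega v_k(-\Delta v_k)=4\pi+o(1)$. The paper keeps the full quantity $|x-x_k|^2\,|{-}\Delta v_k|\,v_k$ together and first argues that $-\Delta v_k(y_k)\ge0$ at the maximum (so the exponential term dominates there automatically), whereas you split off the exponential and Lebesgue pieces from the start; both routes lead to the same rescaling at $y_k$ with scale $\hat r_k^2\sim E_k\hat\gamma_k^{-2}e^{-\hat\gamma_k^2}$ and the same energy count.

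There is, however, one genuine inconsistency in your treatment of $p=2$. You reduce to proving $\sup_\Omega\mathcal E_k\le C$, take $y_k$ as the maximum of $\mathcal E_k$ over all of $\Omega$, and then invoke ``$v_k(y_k)\ge\log\gamma_k$ valid on $\tilde\Omega_k$'' to kill the perturbation in the second rescaling. But $T_k\to\infty$ only gives $v_k(y_k)^2\gtrsim 2\log\gamma_k$, i.e.\ $v_k(y_k)\gtrsim\sqrt{2\log\gamma_k}$, which is far weaker than $\log\gamma_k$; with only this, the rescaled perturbation $\lambda_k e^{-v_k(y_k)^2}\le o(\gamma_k^2)\cdot\gamma_k^{-2+o(1)}$ need not tend to zero, so the second bubble cannot be extracted as stated. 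The fix --- and this is exactly what the paper does --- is to aim only for $\sup_{\tilde\Omega_k}\mathcal E_k\le C$ (your reduction needs no more) and take $y_k$ as the maximum over $\tilde\Omega_k$; then $v_k(y_k)\ge\log\gamma_k$ is automatic, and your upper bound $\widehat\phi_k\le\log 2$ still holds on the full rescaled domain because any point with $v_k>v_k(y_k)\ge\log\gamma_k$ already lies in $\tilde\Omega_k$, so the maximality inequality applies there.
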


\begin{proof}
We shall prove this by contradiction for the case \(1 \leq p < 2\). The proof for the case \(p = 2\) follows a similar approach. Assume that there exists some $y_{k}%
\in\Omega$ such that%
\begin{align}
&  \underset{x\in\Omega}{\sup}\left\vert x-x_{k}\right\vert ^{2}\left\vert
-\Delta v_{k}\right\vert v_{k}\nonumber\\
&  =\left\vert y_{k}-x_{k}\right\vert ^{2}\frac{4\pi}{E_{k}}\left\vert
v_{k}^{2}\left(  y_{k}\right)  e^{v_{k}^{2}\left(  y_{k}\right)  }-\frac
{p}{2(4\pi)^{\frac{p}{2}}}\lambda_{k}v_{k}^{p}\left(  y_{k}\right)
\right\vert \rightarrow\infty\text{ as }k\rightarrow\infty\,\text{.}
\label{58}%
\end{align}

We first show $-\Delta v_{k}\left(  y_{k}\right)  \geq0$. Indeed, if $-\Delta
v_{k}\left(  y_{k}\right)  <0$, then from (\ref{58}), there holds
\begin{equation}
\frac{4\pi}{E_{k}}\frac{p}{2(4\pi)^{\frac{p}{2}}}\lambda_{k}v_{k}^{p}\left(
y_{k}\right)  \rightarrow\infty\text{ as }k\rightarrow\infty\text{.}
\label{59}%
\end{equation}
Combining (\ref{59}) with Lemma \ref{le4.5} and (\ref{227}),\ we have%
\[
\frac{v_{k}^{p}\left(  y_{k}\right)  }{\gamma_{k}^{2-p}}\rightarrow\infty\text{ as
}k\rightarrow\infty\text{,}%
\]
which implies
\[
\frac{p\lambda_{k}v_{k}^{p}\left(  y_{k}\right)  }{2(4\pi)^{\frac{p}{2}}%
v_{k}^{2}\left(  y_{k}\right)  e^{v_{k}^{2}\left(  y_{k}\right)  }}%
\rightarrow0,\text{ as }k\rightarrow\infty\text{.}%
\]
Hence, $-\Delta v_{k}\left(  y_{k}\right)  \geq0$. This and (\ref{58}) imply
that%
\begin{equation}
\frac{4\pi}{E_{k}}v_{k}^{2}\left(  y_{k}\right)  e^{v_{k}^{2}\left(
y_{k}\right)  }\rightarrow\infty\text{ as }k\rightarrow\infty\text{.}
\label{60}%
\end{equation}
Using (\ref{60}) and Lemma \ref{le4.5}, it is easy to see that%
\begin{equation}
\hat{\gamma}_{k}:=v_{k}\left(  y_{k}\right)  \rightarrow\infty\text{ as
}k\rightarrow\infty\text{,} \label{add22}%
\end{equation}
and
\begin{equation}
\frac{\hat{\gamma}_{k}^pe^{\frac{p}{2}\hat{\gamma}_{k}^{2}}}{\gamma_{k}^p%
}\rightarrow\infty\text{ as }k\rightarrow\infty\text{.} \label{add1}%
\end{equation}
Hence, for any $\alpha>\frac{p}{2}\geq\frac{1}{2},$ we get%
\begin{equation}
\frac{e^{\alpha\hat{\gamma}_{k}^{2}}}{\gamma_{k}^p}\geq\frac{\hat{\gamma}^p%
_{k}e^{\frac{p}{2}\hat{\gamma}_{k}^{2}}}{\gamma_{k}^p}\rightarrow\infty\text{ as
}k\rightarrow\infty\text{.} \label{63}%
\end{equation}

Using (\ref{63}) and (\ref{227}), we can obtain the following relationship
between $\lambda_{k}$ and $\hat{\gamma}_{k}:$
\begin{equation}
\frac{\lambda_{k}}{e^{\alpha\hat{\gamma}_{k}^{2}}}=\frac{\lambda_{k}}%
{\gamma_{k}^p}\frac{\gamma_{k}^p}{e^{\alpha\hat{\gamma}_{k}^{2}}}\rightarrow
0\text{ as }k\rightarrow\infty\text{,} \label{64}%
\end{equation}
for any $\alpha>\frac{1}{2}$. Set%
\begin{equation}
\hat{r}_{k}^{2}=\frac{E_{k}}{\pi}\hat{\gamma}_{k}^{-2}e^{-\hat{\gamma}_{k}%
^{2}}\text{.} \label{add221}%
\end{equation}
From (\ref{add1}) and Lemma \ref{le4.5}, it is easy to check $\hat{r}%
_{k}\rightarrow0$ as $k\rightarrow\infty$. By (\ref{58}), (\ref{add22}),
(\ref{64}) and (\ref{add221}), we can get%
\begin{equation}
\underset{x\in\Omega}{\sup}\left\vert x-x_{k}\right\vert ^{2}\left\vert
-\Delta v_{k}\right\vert v_{k}=4\left\vert \frac{y_{k}-x_{k}}{\hat{r}_{k}%
}\right\vert ^{2}\left(  1+o_{k}\left(  1\right)  \right)  \rightarrow
\infty\text{ as }k\rightarrow\infty\,\text{.} \label{65}%
\end{equation}
Since $r_{k}^{-2}>\hat{r}_{k}^{-2}$ where $r_{k}$ is defined as (\ref{3.5}),
by (\ref{65}), we derive that
\begin{equation}
\frac{\left\vert y_{k}-x_{k}\right\vert }{r_{k}}\rightarrow\infty\text{ \ as
}k\rightarrow\infty\text{.} \label{above}%
\end{equation}

For $R>0$, we set $\Omega_{R,k}=B_{R\hat{r}_{k}}\left(  y_{k}\right)
\cap\Omega$ and $\tilde{\Omega}_{R,k}=\left(  \Omega_{R,k}-y_{k}\right)
/\hat{r}_{k}$. Let $\hat{v}_{k}$ be given by%
\[
\hat{v}_{k}\left(  x\right)  =\hat{\gamma}_{k}\left(  v_{k}\left(  y_{k}%
+\hat{r}_{k}x\right)  -\hat{\gamma}_{k}\right)
\]
for $x\in\tilde{\Omega}_{R,k}$.

Now we claim that for all $R>0$,%
\begin{equation}
\underset{k\rightarrow\infty}{\lim\sup}\underset{x\in\tilde{\Omega}_{R,k}%
}{\sup}\hat{v}_{k}\left(  x\right)  \leq0\text{.}\label{67}%
\end{equation}
We prove this by contradiction. If (\ref{67}) does not hold true, then for
some $R>0$, we may assume that there exists $z_{k}\in\tilde{\Omega}_{R,k}$
such that
\begin{equation}
\hat{v}_{k}\left(  z_{k}\right)  =\hat{\gamma}_{k}\left(  v_{k}\left(
y_{k}+\hat{r}_{k}z_{k}\right)  -\hat{\gamma}_{k}\right)  \rightarrow\alpha
_{0}\in\left(  0,+\infty\right)  \label{77}%
\end{equation}
as $k\rightarrow\infty$, thus, there exists some $k_{0}>0$ such that
\begin{equation}
v_{k}\left(  y_{k}+\hat{r}_{k}z_{k}\right)  >v_{k}\left(  y_{k}\right)
\label{68}%
\end{equation}
for any $k\geq$ $k_{0}$. From the definition of $y_{k}$, we have
\begin{equation}
\left.  \left\vert \cdot-x_{k}\right\vert ^{2}\left\vert -\Delta v_{k}\left(
\cdot\right)  \right\vert v_{k}\left(  \cdot\right)  \right\vert _{y_{k}%
+\hat{r}_{k}z_{k}}\leq\left\vert y_{k}-x_{k}\right\vert ^{2}\left\vert -\Delta
v_{k}\left(  y_{k}\right)  \right\vert v_{k}\left(  y_{k}\right)
\text{.}\label{adds1}%
\end{equation}
By (\ref{65}) and the assumption on $z_{k}$, we have
\begin{equation}
\left\vert \left(  y_{k}+\hat{r}_{k}z_{k}\right)  -x_{k}\right\vert
=\left\vert y_{k}-x_{k}\right\vert \left(  1+o_{k}\left(  1\right)  \right)
\text{, as }k\rightarrow\infty.\label{adds2}%
\end{equation}
Combining (\ref{adds1}) and (\ref{adds2}), we get
\begin{equation}
v_{k}\left(  y_{k}+\hat{r}_{k}z_{k}\right)  \left\vert -\Delta v_{k}\left(
y_{k}+\hat{r}_{k}z_{k}\right)  \right\vert \leq\left(  1+o_{k}\left(
1\right)  \right)  v_{k}\left(  y_{k}\right)  \left\vert -\Delta v_{k}\left(
y_{k}\right)  \right\vert \text{,}\label{73}%
\end{equation}
as $k\rightarrow\infty.$

From (\ref{64}), we have%
\begin{equation}
-\Delta v_{k}\left(  y_{k}\right)  =\frac{4\pi}{E_{k}}\left(  1+o_{k}\left(
1\right)  \right)  v_{k}\left(  y_{k}\right)  e^{v_{k}^{2}\left(
y_{k}\right)  } \label{74}%
\end{equation}
as $k\rightarrow\infty$. Due to (\ref{64}) and (\ref{68}), then%
\begin{equation}
-\Delta v_{k}\left(  y_{k}+\hat{r}_{k}z_{k}\right)  =\frac{4\pi}{E_{k}}\left(
1+o_{k}\left(  1\right)  \right)  v_{k}\left(  y_{k}+\hat{r}_{k}z_{k}\right)
e^{v_{k}^{2}\left(  y_{k}+\hat{r}_{k}z_{k}\right)  } \label{75}%
\end{equation}
as $k\rightarrow\infty$. Using (\ref{68}), (\ref{73}), (\ref{74}) and (\ref{75}), we can
obtain%
\[
\exp\left(  v_{k}^{2}\left(  y_{k}+\hat{r}_{k}z_{k}\right)  \right)
\leq\left(  1+o_{k}\left(  1\right)  \right)  \exp\left(  v_{k}^{2}\left(
y_{k}\right)  \right)  \text{,}%
\]
namely,%
\begin{equation}
\exp\left(  v_{k}^{2}\left(  y_{k}+\hat{r}_{k}z_{k}\right)  -v_{k}^{2}\left(
y_{k}\right)  \right)  \leq\left(  1+o_{k}\left(  1\right)  \right)  \text{.}
\label{76}%
\end{equation}
On the other hand, by (\ref{77}) and (\ref{68}), it holds that%
\[
\exp\left(  v_{k}^{2}\left(  y_{k}+\hat{r}_{k}z_{k}\right)  -v_{k}^{2}\left(
y_{k}\right)  \right)  \geq\exp\left(  2\alpha_{0}\left(  1+o_{k}\left(
1\right)  \right)  \right)  >1\text{,}%
\]
as $k\rightarrow\infty$, which contradicts with (\ref{76}), and the claim
(\ref{67}) is proved. \vskip0.1cm

By (\ref{67}), for any $\varepsilon>0$, there exists some $k_{1}>0$, such that
when $k\geq k_{1}$,
\[
\hat{v}_{k}\left(  x\right)  \leq\varepsilon\text{, for all }x\in\tilde
{\Omega}_{R,k},
\]
namely,
\begin{equation}
v_{k}\left(  y_{k}+\hat{r}_{k}x\right)  \leq\hat{\gamma}_{k}+\frac
{\varepsilon}{\hat{\gamma}_{k}}. \label{adds3}%
\end{equation}
Then there exists some $C>0$ such that%
\begin{align}
-\Delta\hat{v}_{k}  &  =4\hat{\gamma}_{k}^{-1}e^{-\hat{\gamma}_{k}^{2}}\left(
v_{k}\left(  y_{k}+\hat{r}_{k}x\right)  e^{v_{k}^{2}\left(  y_{k}+\hat{r}%
_{k}x\right)  }-\frac{p}{2\left(  4\pi\right)  ^{\frac{p}{2}}}\lambda_{k}%
v_{k}^{p-1}\left(  y_{k}+\hat{r}_{k}x\right)  \right) \nonumber\\
&  \leq4\frac{v_{k}\left(  y_{k}+\hat{r}_{k}x\right)  e^{v_{k}^{2}\left(
y_{k}+\hat{r}_{k}x\right)  }}{\hat{\gamma}_{k}e^{\hat{\gamma}_{k}^{2}}}\leq
C\text{,} \label{899}%
\end{align}
and%
\begin{equation}
-\Delta\hat{v}_{k}\geq-4\hat{\gamma}_{k}^{-1}e^{-\hat{\gamma}_{k}^{2}}\frac
{p}{2\left(  4\pi\right)  ^{\frac{p}{2}}}\lambda_{k}v_{k}^{p-1}\left(
y_{k}+\hat{r}_{k}x\right)  \rightarrow0\text{,} \label{adds4}%
\end{equation}
as $k\rightarrow\infty$ by (\ref{64}) and (\ref{adds3}). Hence, combining
(\ref{899}) and (\ref{adds4}), we have
\begin{equation}
\left\vert \left\vert -\Delta\hat{v}_{k}\right\vert \right\vert _{L^{\infty
}\left(  \tilde{\Omega}_{R,k}\right)  }\leq C_{1}\text{,} \label{199}%
\end{equation}
where\ $C_{1}$ is independent on $k$.

Since $\hat{r}_{k}\rightarrow0$ as $k\rightarrow\infty$, either $\tilde
{\Omega}_{R,k}\rightarrow\mathbb{R}^{2}$ as $k\rightarrow\infty$ or
$\tilde{\Omega}_{R,k}\rightarrow\left(  -\infty,R_{0}\right)  \times
\mathbb{R}$ as $k\rightarrow\infty$ for some $R_{0}>0$.

Now, we show the second case will not occur by contradiction. Assume that
$\tilde{\Omega}_{R,k}\rightarrow\left(  -\infty,R_{0}\right)  \times
\mathbb{R}$ as $k\rightarrow\infty$ for some $R_{0}>0,$ set
\[
\hat{v}_{k}=\varphi_{k}+\psi_{k}\text{ in }\tilde{\Omega}_{R,k}\cap
B_{2R_{0}+1}\left(  0\right)  \text{,}%
\]
with%
\[
-\Delta\varphi_{k}=-\Delta\hat{v}_{k}\text{ in }\tilde{\Omega}_{R,k}\cap
B_{2R_{0}+1}\left(  0\right)  \text{\ and\ }\psi_{k}=\hat{v}_{k}\text{ on
}\partial\left(  \tilde{\Omega}_{R,k}\cap B_{2R_{0}+1}\left(  0\right)
\right)  \text{.}%
\]
According to (\ref{199}), since $\varphi_{k}=0$ on $\partial\left(
\tilde{\Omega}_{R,k}\cap B_{2R_{0}+1}\left(  0\right)  \right)  $, by standard
elliptic theory that $\left(  \varphi_{k}\right)  $ is uniformly bounded in
$\tilde{\Omega}_{R,k}\cap B_{2R_{0}+1}\left(  0\right)  $. Then $\psi_{k}$ is
some harmonic function in $\tilde{\Omega}_{R,k}\cap B_{2R_{0}+1}\left(
0\right)  $ which is bounded from above thanks to (\ref{67}) and which
verifies that $\psi_{k}=-\hat{\gamma}_{k}^{2}\rightarrow-\infty$ on
$\partial\tilde{\Omega}_{R,k}\cap B_{2R_{0}+1}\left(  0\right)  $. Since
$\partial\tilde{\Omega}_{R,k}\cap B_{2R_{0}+1}\left(  0\right)  \rightarrow
\left(  \{R_{0}\}\times R\right)  \cap B_{2R_{0}+1}\left(  0\right)  $ as
$k\rightarrow\infty$, one easily gets that%
\[
\psi_{k}\left(  0\right)  \rightarrow-\infty\text{ \ as }k\rightarrow
\infty\text{.}%
\]
This is a contradiction, since $\hat{v}_{k}\left(  0\right)  =0$, $\psi
_{k}\left(  0\right)  =-\varphi_{k}\left(  0\right)  $ and $\left(
\varphi_{k}\left(  0\right)  \right)  $ is bounded. Thus, we proved%
\[
\tilde{\Omega}_{R,k}\rightarrow\mathbb{R}^{2}\text{.}%
\]
Moreover, using $\hat{v}_{k}\left(  0\right)  =0$, (\ref{67}), (\ref{199}) and
the standard elliptic theory, we get%
\begin{equation}
\hat{v}_{k}\rightarrow\phi_{\infty}\text{ in }C_{loc}^{1}\left(
\mathbb{R}^{2}\right)  \text{,}\label{gives}%
\end{equation}
as $k\rightarrow\infty$, where $\phi_{\infty}$ is defined in (\ref{214}). \vskip0.1cm

Denote $f_{k}=\frac{4\pi}{E_{k}}(v_{k}e^{v_{k}^{2}}-\frac{p}{2(4\pi)^{\frac
{p}{2}}}\lambda_{k}v_{k}^{p-1})$, then by (\ref{eq of vn}), we have%
\begin{equation}
\int_{\Omega}\left\vert \nabla v_{k}\right\vert ^{2}dx=\int_{\Omega}f_{k}%
v_{k}dx=4\pi\text{.} \label{78}%
\end{equation}
It follows from (\ref{65}) and (\ref{above}) that $B_{Rr_{k}}\left(
x_{k}\right)  \cap B_{R\hat{r}_{k}}\left(  y_{k}\right)  =\emptyset$ as
$k\rightarrow\infty$, then%
\begin{align*}
\underset{k\rightarrow\infty}{\lim}\int_{\Omega}f_{k}v_{k}dx  &
=\underset{k\rightarrow\infty}{\lim}\int_{B_{Rr_{k}}\left(  x_{k}\right)
}f_{k}v_{k}dx+\underset{k\rightarrow\infty}{\lim}\int_{B_{R\hat{r}_{k}}\left(
y_{k}\right)  }f_{k}v_{k}dx\\
&  +\underset{k\rightarrow\infty}{\lim}\int_{\Omega\backslash\{B_{Rr_{k}%
}\left(  x_{k}\right)  \cup B_{R\hat{r}_{k}}\left(  y_{k}\right)  \}}%
f_{k}v_{k}dx\\
&  :=L_{1}+L_{2}+L_{3}\text{.}%
\end{align*}
An direct calculation gives that
\begin{align}
L_{1}  &  =\underset{k\rightarrow\infty}{\lim}\int_{B_{Rr_{k}}\left(
x_{k}\right)  }f_{k}v_{k}dx\nonumber\\
&  =\underset{k\rightarrow\infty}{\lim}\int_{B_{R}\left(  0\right)  }4\left[
\left(  \frac{v_{k}\left(  x_{k}+r_{k}x\right)  }{\gamma_{k}}\right)
^{2}e^{\left(  1+\frac{v_{k}\left(  x_{k}+r_{k}x\right)  }{\gamma_{k}}\right)
\phi_{k}}\right. \nonumber\\
&  \left.  -\frac{p}{2(4\pi)^{\frac{p}{2}}}\lambda_{k}\gamma_{k}%
^{p-2}e^{-\gamma_{k}^{2}}\left\vert \frac{v_{k}\left(  x_{k}+r_{k}x\right)
}{\gamma_{k}}\right\vert ^{p}\right]  dx\nonumber\\
&  =4\int_{B_{R}\left(  0\right)  }e^{2\phi_{\infty}}dx\rightarrow
4\int_{\mathbb{R}^{2}}e^{2\phi_{\infty}}dx=4\pi\text{,} \label{79}%
\end{align}
as $R\rightarrow\infty$, where $\phi_{k}\left(  x\right)  $ satisfies equation
(\ref{18}). Similarly, using (\ref{64}), (\ref{gives}) and (\ref{67}) we have%
\begin{equation}
L_{2}=\underset{k\rightarrow\infty}{\lim}\int_{B_{R\hat{r}_{k}}\left(
y_{k}\right)  }f_{k}v_{k}dx=4\pi. \label{80}%
\end{equation}
And for $L_{3}$, we have%
\begin{align}
L_{3}  &  =\underset{k\rightarrow\infty}{\lim}\int_{\Omega\backslash
\{B_{Rr_{k}}\left(  x_{k}\right)  \cup B_{R\hat{r}_{k}}\left(  y_{k}\right)
\}}f_{k}v_{k}dx\nonumber\\
&  =\underset{k\rightarrow\infty}{\lim}\frac{4\pi}{E_{k}}\int_{\Omega
\backslash\{B_{Rr_{k}}\left(  x_{k}\right)  \cup B_{R\hat{r}_{k}}\left(
y_{k}\right)  \}}\left(  v_{k}^{2}e^{v_{k}^{2}}-\frac{p}{2(4\pi)^{\frac{p}{2}%
}}\lambda_{k}v_{k}^{p}\right)  dx\nonumber\\
&  \geq\underset{k\rightarrow\infty}{\lim}\frac{4\pi}{E_{k}}\int_{\Omega
}-\frac{p}{2(4\pi)^{\frac{p}{2}}}\lambda_{k}v_{k}^{p}dx=0\text{,} \label{81}%
\end{align}
where the last inequality holds due to (\ref{238}). Combining (\ref{79}),
(\ref{80}) and (\ref{81}), we get%
\[
\underset{k\rightarrow\infty}{\lim}\int_{\Omega}f_{k}v_{k}dx\geq8\pi\text{,}%
\]
which contradicts to with (\ref{78}), and the proof is finished.
\end{proof}
In the following, we use the weak point-wise estimates of $\Delta v_{k}$ from Lemma \ref{52} to derive the point-wise estimates for the gradient of $v_{k}$. This allows comparison of $v_{k}$ with its mean value on spheres and is crucial for proving the comparison principle of the equation in (\ref{eq of vn}).

\begin{lemma}
\label{85} There exists $C>0$ such that%
\[
\left\vert x-x_{k}\right\vert \left\vert \nabla v_{k}\right\vert v_{k}\leq
C\text{ in }\tilde{\Omega}_k\text{,}%
\]
where $\tilde{\Omega}_k=\Omega$ if $1\leq p<2$, and $\tilde{\Omega}_k=\{x\in\Omega \mid v_k(x)\geq \log(\gamma_k)\}$ if $p=2$.
\end{lemma}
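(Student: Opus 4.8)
The plan is to deduce this gradient estimate from the Laplacian estimate of Lemma \ref{52} by a contradiction and blow‑up argument, in the spirit of \cite{Dru2} (see also \cite{Li2006,Man}). I would treat $1\le p<2$ in detail and reduce $p=2$ to it by working inside $\{v_k\ge\log\gamma_k\}$ (where Lemma \ref{52} is available), noting that on $\{v_k<\log\gamma_k\}$ one has $|x-x_k|\,|\nabla v_k|\,v_k\to0$ directly from \eqref{2271}.

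Suppose the estimate fails. Then, up to a subsequence, I would select $y_k\in\Omega$ (near‑maximizers if the supremum is not attained) with
\[
t_k:=|y_k-x_k|\,|\nabla v_k(y_k)|\,v_k(y_k)=\sup_\Omega|x-x_k|\,|\nabla v_k|\,v_k\longrightarrow+\infty,
\]
and set $d_k:=|y_k-x_k|$. The first step is to locate $y_k$: from \eqref{2271}, $|x-x_k|\,|\nabla v_k|\,v_k=O(\gamma_k^{-2})$ on $\Omega\setminus B_\rho(x_0)$ for each fixed $\rho>0$, forcing $d_k\to0$; and writing $x=x_k+r_ky$ one computes $|x-x_k|\,|\nabla v_k|\,v_k=|y|\,|\nabla\phi_k(y)|\,(1+\gamma_k^{-2}\phi_k(y))$, which by \eqref{214}--\eqref{3.6} (so that $|y|\,|\nabla\phi_\infty(y)|=2|y|^2/(1+|y|^2)\le2$) stays bounded on every $B_{Rr_k}(x_k)$, forcing $d_k/r_k\to\infty$.

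Next I would rescale around $y_k$. On $B_{d_k/2}(y_k)$ one has $|x-x_k|\ge d_k/2$, so Lemma \ref{52} gives $|\Delta v_k|\le 4C(d_k^2v_k)^{-1}$ there. Setting $\hat v_k(z):=v_k(y_k)\bigl(v_k(y_k+d_kz)-v_k(y_k)\bigr)$ on $B_{1/2}(0)$, one has $\hat v_k(0)=0$, $|\nabla\hat v_k(0)|=t_k\to\infty$, and $|\Delta\hat v_k(z)|\le 4C\,v_k(y_k)/v_k(y_k+d_kz)$. Splitting $\hat v_k=\varphi_k+\psi_k$ with $-\Delta\varphi_k=-\Delta\hat v_k$, $\varphi_k|_{\partial B_{1/2}(0)}=0$, and $\psi_k$ harmonic, once the Laplacian is controlled ($\|\Delta\hat v_k\|_{L^\infty(B_{1/2})}\le C'$, i.e.\ $v_k\gtrsim v_k(y_k)$ on $B_{d_k/2}(y_k)$) elliptic theory gives $\|\varphi_k\|_{C^1(\overline{B_{1/4}})}\le C$, hence $|\nabla\psi_k(0)|\ge t_k-C\to\infty$; harmonicity then yields $|\nabla\psi_k(0)|\le C\operatorname{osc}_{B_{1/2}}\psi_k$, so that $v_k(y_k)\operatorname{osc}_{B_{d_k/2}(y_k)}v_k=\operatorname{osc}_{B_{1/2}}\hat v_k\to+\infty$. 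The contradiction must then come from showing this cannot happen.

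The hard part will be precisely this last step, and it is where the lack of radial symmetry bites. I expect to split cases. If $v_k(y_k)$ stays bounded, the maximality of $y_k$ prevents $v_k$ from dropping to a small fraction of $v_k(y_k)$ on $B_{d_k/2}(y_k)$, and the energy quantization in the proof of Lemma \ref{52} (a second concentration point would force $\int_\Omega|\nabla v_k|^2\ge8\pi$, against $\int_\Omega|\nabla v_k|^2=4\pi$) rules out concentration there; so $v_k$ is bounded above and below, $\hat v_k\to\hat v$ in $C^1_{loc}(B_{1/2})$, and $t_k=|\nabla\hat v_k(0)|\to|\nabla\hat v(0)|<\infty$ --- a contradiction. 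If $v_k(y_k)\to\infty$, then $y_k$ lies between the scales $r_k$ and $1$, and I would match the bubble profile $\gamma_k+\gamma_k^{-1}\phi_\infty(\cdot/r_k)$ (from \eqref{214}) with the neck profile $\gamma_k^{-1}G_{\Omega,x_0}$ (from \eqref{2271}) to obtain $\operatorname{osc}_{B_{d_k/2}(y_k)}v_k=O(v_k(y_k)^{-1})$, whence $v_k(y_k)\operatorname{osc}_{B_{d_k/2}(y_k)}v_k=O(1)$ --- again a contradiction. Making this matching rigorous in the absence of symmetry is the real work; everything else is routine elliptic estimates.
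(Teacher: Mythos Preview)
Your setup is right --- contradiction, rescale at scale $d_k=|y_k-x_k|$, and feed in Lemma~\ref{52} --- and your location step ($d_k\to0$, $d_k/r_k\to\infty$) is fine. But the endgame you sketch for the case $v_k(y_k)\to\infty$ has a real gap. You write that you would ``match the bubble profile \eqref{214} with the neck profile \eqref{2271}'' to get $\operatorname{osc}_{B_{d_k/2}(y_k)}v_k=O(v_k(y_k)^{-1})$. At this point in the argument that information simply is not available: \eqref{214} controls $v_k$ only on $B_{Rr_k}(x_k)$ for each fixed $R$, \eqref{2271} only on $\Omega\setminus B_\rho(x_0)$ for each fixed $\rho$, and your $y_k$ lives strictly in between. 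Bridging these two regimes is exactly the purpose of the comparison principle (Lemma~\ref{507}), which is proved \emph{after} Lemma~\ref{85} and uses it. So the proposed matching is circular.

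The paper avoids this altogether. After rescaling $\omega_k(x)=v_k(z_k+\mu_k x)$ with $\mu_k=|z_k-x_k|$, it first shows $\|\Delta\omega_k\|_{L^\infty}\to0$ on every $B_R(0)\setminus B_{R^{-1}}(\tilde x)$ (directly from Lemma~\ref{52}), then that $\omega_k(0)=v_k(z_k)\to\infty$ and $\omega_k/\omega_k(0)\to1$ in $C^1_{\mathrm{loc}}(\mathbb R^2\setminus\{\tilde x\})$. The crucial step you are missing is to pass to the \emph{normalized} function
\[
\tilde\omega_k=\frac{\omega_k-\omega_k(0)}{|\nabla\omega_k(0)|},
\]
which has $|\nabla\tilde\omega_k(0)|=1$ and converges in $C^1_{\mathrm{loc}}(\mathbb R^2\setminus\{\tilde x\})$ to a harmonic $H$ with $H(0)=0$, $|\nabla H(0)|=1$, and $|\nabla H(x)|\le |x-\tilde x|^{-1}$. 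A short energy computation (bounding $\int_{\partial B_\eta(\tilde x)}\omega_k\partial_\nu\omega_k$ using $\int_\Omega v_k(-\Delta v_k)=4\pi$ and $\|\nabla\omega_k\|_{L^2}=O(1)$) shows $\int_{\partial B_\eta(\tilde x)}\partial_\nu H\,d\sigma=0$, so the singularity at $\tilde x$ is removable; then $H$ is bounded harmonic on $\mathbb R^2$ (from the gradient decay plus the mean-value property), hence $H\equiv0$ by Liouville, contradicting $|\nabla H(0)|=1$. No profile matching is needed. Two technical points: you should rescale on $B_{R\mu_k}(z_k)$ for every $R$ (not just $R=1/2$), since Liouville requires the limit on all of $\mathbb R^2$; and your normalization $\hat v_k=v_k(y_k)(v_k(\cdot)-v_k(y_k))$ has $|\nabla\hat v_k(0)|\to\infty$, so it cannot have a nontrivial $C^1$ limit --- divide by $|\nabla\omega_k(0)|$ instead.
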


\begin{proof}
We only prove this by contradiction for the case \(1 \leq p < 2\), the proof for the case \(p = 2\) is similar. We assume that $z_{k}\in\Omega$ such that%
\begin{equation}
\underset{x\in\Omega}{\max}\left\vert x-x_{k}\right\vert \left\vert \nabla
v_{k}(x)\right\vert v_{k}(x)=\left\vert z_{k}-x_{k}\right\vert \left\vert
\nabla v_{k}(z_{k})\right\vert v_{k}(z_{k}):=C_{k}\rightarrow\infty\label{max}%
\end{equation}
as $k\rightarrow\infty$. Let $\mu_{k}>0$ be given by%
\[
\mu_{k}=\left\vert z_{k}-x_{k}\right\vert \text{.}%
\]
By (\ref{2271}) we immediate get $\mu_{k}\rightarrow0$, as $k\rightarrow
\infty$.

For simplicity, and with a slight abuse of notation, we set $\Omega
_{R,k}=B_{R\mu_{k}}\left(  z_{k}\right)  \cap\Omega$, $\tilde{\Omega}%
_{R,k}=\left(  \Omega_{R,k}-z_{k}\right)  /\mu_{k}$, and
\[
\omega_{k}(x)=v_{k}(z_{k}+\mu_{k}x)\text{ for }x\in\tilde{\Omega}%
_{R,k}\text{.}%
\]
It is clearly that%
\[
\tilde{x}_{k}:=\frac{x_{k}-z_{k}}{\mu_{k}}\in\tilde{\Omega}_{R,k}\text{,}%
\]
with
\[
\left\vert \tilde{x}_{k}\right\vert =1\text{ for all }k>0\text{.}%
\]
Hence, (\ref{max}) can be rewritten as
\begin{equation}
\left\vert x-\tilde{x}_{k}\right\vert \omega_{k}(x)\left\vert \nabla\omega
_{k}(x)\right\vert \leq\omega_{k}(0)\left\vert \nabla\omega_{k}(0)\right\vert
\rightarrow\infty\text{ as }k\rightarrow\infty\text{.} \label{54}%
\end{equation}
for all $x\in\tilde{\Omega}_{R,k}\backslash\{\tilde{x}_{k}\}$.

Now, we claim that for any $R>1,$
\begin{equation}
\left\vert \left\vert -\Delta\omega_{k}\right\vert \right\vert _{L^{\infty
}\left(  \tilde{\Omega}_{R,k}\backslash B_{R^{-1}}\left(  \tilde{x}%
_{k}\right)  \right)  }\rightarrow0,\text{as }k\rightarrow\infty. \label{200}%
\end{equation}
For any sequence $\left(  y_{k}\right)  _{k}$ such that $y_{k}\in\Omega
_{R,k}\backslash\{x_{k}\}$, namely, $\tilde{y}_{k}=\frac{y_{k}-z_{k}}{\mu_{k}%
}\in\tilde{\Omega}_{R,k}\backslash\{\tilde{x}_{k}\}$. If $v_{k}\left(
y_{k}\right)  $ is bounded, due to (\ref{227}) and Lemma \ref{le4.5}, we have
\begin{align}
\left\vert -\Delta\omega_{k}\left(  \tilde{y}_{k}\right)  \right\vert  &
=\mu_{k}^{2}\left\vert -\Delta v_{k}\left(  y_{k}\right)  \right\vert
\nonumber\\
&  =\mu_{k}^{2}\left\vert \frac{4\pi}{E_{k}}\left(  v_{k}\left(  y_{k}\right)
e^{v_{k}^{2}\left(  y_{k}\right)  }-\frac{p}{2(4\pi)^{\frac{p}{2}}}\lambda
_{k}v_{k}^{p-1}\left(  y_{k}\right)  \right)  \right\vert \rightarrow0
\label{72}%
\end{align}
as $k\rightarrow\infty$. While if $v_{k}\left(  y_{k}\right)  \rightarrow
\infty$ as $k\rightarrow\infty$, by Lemma \ref{52}, for $y_{k}\in
\tilde{\Omega}_{R,k}\backslash B_{R^{-1}}\left(  \tilde{x}_{k}\right)  $, we
have
\begin{equation}
\left\vert -\Delta\omega_{k}\left(  \tilde{y}_{k}\right)  \right\vert =\mu
_{k}^{2}\left\vert -\Delta v_{k}\left(  y_{k}\right)  \right\vert \leq\frac
{C}{v_{k}\left(  y_{k}\right)  \left\vert \tilde{y}_{k}-\tilde{x}%
_{k}\right\vert ^{2}}\rightarrow0, \label{71}%
\end{equation}
as $k\rightarrow\infty$. Combining (\ref{71}) and (\ref{72}), the claim
(\ref{200}) is proved.

Since $v_{k}$ is bounded in $H_{0}^{1}(\Omega)$, we immediately have%
\begin{equation}
\left\vert \left\vert \nabla\omega_{k}\right\vert \right\vert _{L^{2}\left(
\tilde{\Omega}_{R,k}\right)  }=O_{k}\left(  1\right)  . \label{gradient}%
\end{equation}

Next, we show that up to a subsequence%
\begin{equation}
\text{ }\frac{d\left(  z_{k},\partial\Omega\right)  }{\mu_{k}}\rightarrow
\infty\text{ as }k\rightarrow\infty. \label{49}%
\end{equation}
We prove this by contradiction, and assume that (\ref{49}) does not hold, then
for all $R$ sufficiently large, from (\ref{54}), (\ref{200}) and the fact that
$\omega_{k}=0$ on some point of $\tilde{\Omega}_{R,k}$, we can get
\[
\frac{\omega_{k}}{\sqrt{\omega_{k}(0)\left\vert \nabla\omega_{k}(0)\right\vert
}}\rightarrow0\text{ in }C_{loc}^{1}\left(  \tilde{\Omega}_{R,k}\backslash
B_{R^{-1}}\left(  \tilde{x}_{k}\right)  \right)  \text{ as }k\rightarrow
\infty\text{,}%
\]
by using the standard elliptic regularity theory, but this contradicts to%
\[
\frac{\omega_{k}\left(  0\right)  }{\sqrt{\omega_{k}(0)\left\vert \nabla
\omega_{k}(0)\right\vert }}\left\vert \nabla\left(  \frac{\omega_{k}}%
{\sqrt{\omega_{k}(0)\left\vert \nabla\omega_{k}(0)\right\vert }}\right)
\left(  0\right)  \right\vert =1\text{,}%
\]
thus (\ref{49}) is proved. We remark that this also implies that $\mu
_{k}\rightarrow0$ as $k\rightarrow\infty$.

Up to a subsequence, we may assume
\[
\tilde{x}_{k}\rightarrow\tilde{x}\text{ as }k\rightarrow\infty\text{.}%
\]
Now, we claim that
\begin{equation}
\omega_{k}\left(  0\right)  =v_{k}\left(  z_{k}\right)  \rightarrow
\infty\text{ as }k\rightarrow\infty\text{.} \label{201}%
\end{equation}

We prove this claim\ by contradiction, and assume that $\omega_{k}\left(
0\right)  =O_{k}\left(  1\right)  ,$ as $k\rightarrow\infty.$ \ Fix $R>2$, and
define $K_{R}=B_{R}\left(  0\right)  \backslash B_{R^{-1}}\left(  \tilde
{x}\right)  $, due to (\ref{200}), we have
\begin{equation}
\left\vert \left\vert -\Delta\omega_{k}\right\vert \right\vert _{L^{\infty
}\left(  K_{R}\right)  }\rightarrow0\text{ as }k\rightarrow\infty
\text{.}\label{202}%
\end{equation}
Since $\omega_{k}\geq0$, and $\omega_{k}\left(  0\right)  =O_{k}\left(
1\right)  $, then by the elliptic regularity theory we know that $\left(
\omega_{k}\right)  $ is uniformly bounded in $C^{1}\left(  K_{R}\right)  $,
which contradicts to (\ref{54}), thus the claim (\ref{201}) is proved.

By (\ref{gradient}), (\ref{49}) (\ref{201}), (\ref{202}) and the fact that
$\omega_{k}\geq0$, using the elliptic regularity theory again we get%
\begin{equation}
\frac{\omega_{k}}{\omega_{k}\left(  0\right)  }\rightarrow1\text{ in }%
C_{loc}^{1}\left(  \mathbb{R}^{2}\backslash\{\tilde{x}\}\right)  ,\text{ as
}k\rightarrow\infty\text{.} \label{rato}%
\end{equation}

For any $x\in\tilde{\Omega}_{R,k}\backslash B_{R^{-1}}\left(  \tilde
{x}\right)  $, we set%
\[
\tilde{\omega}_{k}\left(  x\right)  =\frac{\omega_{k}\left(  x\right)
-\omega_{k}\left(  0\right)  }{\left\vert \nabla\omega_{k}\left(  0\right)
\right\vert }.
\]
Inequality (\ref{54}) together with (\ref{rato}) gives that%
\begin{equation}
\left\vert \nabla\tilde{\omega}_{k}\left(  x\right)  \right\vert \leq
\frac{1+o_{k}\left(  1\right)  }{\left\vert x-\tilde{x}_{k}\right\vert
}\text{.} \label{change gradient}%
\end{equation}
Given $R>1$ and any sequence $\left(  y_{k}\right)  _{k}$ such that $\tilde
{y}_{k}:=\frac{y_{k}-z_{k}}{\mu_{k}}\in\tilde{\Omega}_{R,k}\backslash
B_{R^{-1}}\left(  \tilde{x}\right)  $, from (\ref{rato}) and (\ref{71}), we
get
\[
\left\vert -\Delta\tilde{\omega}_{k}\left(  \tilde{y}_{k}\right)  \right\vert
=\frac{\left\vert -\Delta\omega_{k}\left(  \tilde{y}_{k}\right)  \right\vert
}{\left\vert \nabla\omega_{k}\left(  0\right)  \right\vert }=\frac{\omega
_{k}\left(  0\right)  }{C_{k}}\left\vert -\Delta\omega_{k}\left(  \tilde
{y}_{k}\right)  \right\vert \leq\frac{C\left(  1+o_{k}\left(  1\right)
\right)  }{C_{k}\left\vert \tilde{y}_{k}-\tilde{x}_{k}\right\vert ^{2}%
}\text{,}%
\]
as $k\rightarrow\infty$, where $C_{k}$ is defined in (\ref{max}). Hence,we
have
\begin{equation}
\left\vert \left\vert -\Delta\tilde{\omega}_{k}\right\vert \right\vert
_{L^{\infty}\left(  \tilde{\Omega}_{R,k}\backslash B_{R^{-1}}\left(  \tilde
{x}\right)  \right)  }\rightarrow0\text{ as }k\text{ }\rightarrow
\infty\text{.} \label{AB}%
\end{equation}
From (\ref{change gradient}), (\ref{AB}) and $\tilde{\omega}_{k}\left(
0\right)  =0$, using the standard elliptic theory with the fact (\ref{49}), we
have
\begin{equation}
\tilde{\omega}_{k}\rightarrow H\text{ in }C_{loc}^{1}\left(  \mathbb{R}%
^{2}\backslash\{\tilde{x}\}\right)  \text{ as }k\rightarrow\infty\text{,}
\label{203}%
\end{equation}
where $H\in C^{\infty}\left(  \mathbb{R}^{2}\backslash\{\tilde{x}\}\right)  $
satisfies%
\[
-\Delta H=0\text{ in }\mathbb{R}^{2}\backslash\{\tilde{x}\}\text{,\ }H\left(
0\right)  =0\text{,\ }\left\vert \nabla H\left(  0\right)  \right\vert
=1\text{,}%
\]
with%
\begin{equation}
\left\vert \nabla H\left(  x\right)  \right\vert \leq\frac{1}{\left\vert
x-\tilde{x}\right\vert }\text{ for all }x\in\mathbb{R}^{2}\backslash
\{\tilde{x}\}. \label{2041}%
\end{equation}

Let $\eta>0$ be small enough, we have
\begin{align*}
\left\vert \int_{B_{\eta}\left(  \tilde{x}\right)  }\omega_{k}\left(
-\Delta\omega_{k}\right)  dx\right\vert  &  =\left\vert \int_{B_{\mu_{k}\eta
}\left(  z_{k}+\mu_{k}\tilde{x}\right)  }v_{k}\left(  -\Delta v_{k}\right)
dx\right\vert \nonumber\\
&  \leq\int_{B_{\mu_{k}\eta}\left(  z_{k}+\mu_{k}\tilde{x}\right)  }%
v_{k}\left(  -\Delta v_{k}\right)  ^{+}dx+\int_{B_{\mu_{k}\eta}\left(
z_{k}+\mu_{k}\tilde{x}\right)  }v_{k}\left(  -\Delta v_{k}\right)
^{-}dx\text{.} %
\end{align*}
Since $\left(  -\Delta v_{k}\right)  ^{-}=O_{k}\left(  \frac{\lambda_{k}%
v_{k}^{p-1}}{E_{k}}\right)  $ by (\ref{eq of vn}), then from (\ref{238}), we
have%
\[
\int_{B_{\mu_{k}\eta}\left(  z_{k}+\mu_{k}\tilde{x}\right)  }v_{k}\left(
-\Delta v_{k}\right)  ^{-}dx\leq C\int_{\Omega}\frac{\lambda_{k}v_{k}^{p}%
}{E_{k}}dx\rightarrow0
\]
as $k\rightarrow\infty$. Hence,%
\begin{align}
\left\vert \int_{B_{\eta}\left(  \tilde{x}\right)  }\omega_{k}\left(
-\Delta\omega_{k}\right)  dx\right\vert  &  \leq\int_{B_{\mu_{k}\eta}\left(
y_{k}+\mu_{k}\tilde{x}\right)  }v_{k}\left(  -\Delta v_{k}\right)
^{+}dx+o_{k}\left(  1\right) \nonumber\\
&  \leq\int_{\Omega}v_{k}\left(  -\Delta v_{k}\right)  ^{+}dx+o_{k}\left(
1\right) \nonumber\\
&  =\int_{\Omega}v_{k}\left(  -\Delta v_{k}\right)  dx+\int_{\Omega}%
v_{k}\left(  -\Delta v_{k}\right)  ^{-}dx+o_{k}\left(  1\right) \nonumber\\
&  =4\pi+o_{k}\left(  1\right)  \text{,} \label{209}%
\end{align}
where in the last step we have used the fact that $\int_{\Omega}v_{k}\left(
-\Delta v_{k}\right)  dx=4\pi$.

From (\ref{gradient}) and (\ref{209}), using the Green's formula, we get
\begin{equation}
\int_{\partial B_{\eta}\left(  \tilde{x}\right)  }\omega_{k}\partial_{\nu
}\omega_{k}d\sigma=\int_{B_{\eta}\left(  \tilde{x}\right)  }\left\vert
\nabla\omega_{k}\right\vert ^{2}dx-\int_{B_{\eta}\left(  \tilde{x}\right)
}\omega_{k}\left(  -\Delta\omega_{k}\right)  dx=O_{k}\left(  1\right)  .
\label{addss1}%
\end{equation}
On the other hand, using (\ref{rato}) and (\ref{203}), we can get%
\[
\int_{\partial B_{\eta}\left(  \tilde{x}\right)  }\omega_{k}\partial_{\nu
}\omega_{k}d\sigma=\omega_{k}\left(  0\right)  \left\vert \nabla\omega
_{k}\left(  0\right)  \right\vert \left(  \int_{\partial B_{\eta}\left(
\tilde{x}\right)  }\partial_{\nu}Hd\sigma+o_{k}\left(  1\right)  \right)  .
\]
This together with (\ref{54}) and (\ref{addss1}) yields%
\[
\int_{\partial B_{\eta}\left(  \tilde{x}\right)  }\partial_{\nu}%
Hd\sigma=0\text{.}%
\]
Therefore, $H$ must be bounded around $\tilde{x}$ and then the singularity at
$\tilde{x}$ is removable. Hence, we can deduce
\[
\text{ }-\Delta H=0\text{ in }\mathbb{R}^{2}\text{,}%
\]
and then
\begin{equation}
\int_{\partial B_{R}\left(  0\right)  }Hd\sigma=0\text{ for all }R>0\text{,}
\label{207}%
\end{equation}
due to $H\left(  0\right)  =0$.

Let $R>2$ and $x\in\partial B_{R}\left(  0\right)  $, by (\ref{2041}), we have%
\begin{equation}
\underset{\partial B_{R}\left(  0\right)  }{\sup}\left\vert \nabla
H\right\vert \leq\frac{2}{R}\text{.} \label{208}%
\end{equation}
Due to (\ref{207}), there exists some $y\in\partial B_{R}\left(  0\right)  $
such that $H\left(  y\right)  =0$, and by (\ref{208}), we obtain
\[
\left\vert H\left(  x\right)  \right\vert \leq\left(  \underset{\partial
B_{R}\left(  0\right)  }{\max}\left\vert \nabla H\right\vert \right)  \pi
R\leq2\pi\text{,}%
\]
which means $H\in L^{\infty}\left(  \mathbb{R}^{2}\right)  $. By the
Liouville's theorem and the fact $H\left(  0\right)  =0$, we obtain that
$H\equiv0$. This is clearly a contradiction with the fact that $\left\vert
\nabla H\left(  0\right)  \right\vert =1$, hence (\ref{max}) does not hold,
and the proof is finished.
\end{proof}

\medskip

\subsection{ Blow-up behavior up to higher order precision for
radially symmetric solutions}

In this subsection, we present radially symmetric solutions, defined on the entire space  $\mathbb{R}^2$ and centered at $x_{k}$, and analyze their asymptotic behavior within a small ball around $x_{k}$.

  Let $V_{k}$ be the radially symmetric solution centered at $x_{k}$ of%
\begin{equation}
\left\{
\begin{array}
[c]{l}%
-\Delta V_{k}=\frac{{{4\pi}}}{{{E_{k}}}}\left(  V_{k}{{e^{V_{k}^{2}}}-}%
\frac{p}{2\left(  4\pi\right)  ^{\frac{p}{2}}}\lambda_{k}\left\vert V{_{k}%
}\right\vert ^{p-1}\right)  ,\\
V_{k}\left(  x_{k}\right)  ={\gamma}_{k}.
\end{array}
\right.  \label{3.10}%
\end{equation}
and $t_{k}$ be given by%
\[
t_{k}\left(  y\right)  =-\phi_{\infty}\left(  \frac{y-x_{k}}{r_{k}}\right)
=\log\left(  1+\frac{\left\vert y-x_{k}\right\vert ^{2}}{r_{k}^{2}}\right)
\text{,}%
\]
where $\phi_{\infty}$ is defined in (\ref{3.6}). We will write $V_{k}\left(
r\right)  $ or $t_{k}\left(  r\right)  $ instead of $V_{k}\left(  y\right)  $
or $t_{k}\left(  y\right)  $ for $\left\vert y-x_{k}\right\vert =r,$
respectively. For any $\delta\in\left(  0,1\right)  $, we let $r_{k,\delta}>0$
be given as the follows%
\begin{equation}
t_{k}\left(  r_{k,\delta}\right)  =\delta\gamma_{k}^{2}\text{.} \label{55}%
\end{equation}
Observe that (\ref{55}) implies%
\begin{equation}
r_{k,\delta}^{2}=r_{k}^{2}\exp\left(  \delta\gamma_{k}^{2}+o_{k}\left(
1\right)  \right)  \text{,} \label{212}%
\end{equation}
and
\[
\frac{r_{k,\delta}}{r_{k}}\rightarrow\infty,\text{ as }k\rightarrow\infty.
\]

Now, we study the blow-up behavior of radially symmetric solutions
$\left(  V_{k}\right)  $ up to higher order precision in the ball
$B_{r_{k,\delta}}\left(  x_{k}\right)  .$ Let $S_{k}$ be given by%
\[
S_{k}\left(  z\right)  =S_{0}\left(  \frac{z-x_{k}}{r_{k}}\right)  \text{.}%
\]
where $S_{0}$ is the radial solution around $0\in\mathbb{R}^{2}$ of%
\begin{equation}
-\Delta S_{0}-8\exp\left(  2\phi_{\infty}\right)  S_{0}=4\exp\left(
2\phi_{\infty}\right)  \left(  \phi_{\infty}^{2}+\phi_{\infty}\right)  ,\text{
in }\mathbb{R}^{2}\label{57}%
\end{equation}
with $S_{0}\left(  0\right)  =0$.

From \cite{Malchiodi}, we know that $S_{0}$ has the following explicit
formula:
\[
S_{0}\left(  r\right)  =\phi_{\infty}\left(  r\right)  +\frac{2r^{2}}{1+r^{2}%
}-\frac{1}{2}\phi_{\infty}^{2}\left(  r\right)  +\frac{1-r^{2}}{1+r^{2}}%
\int_{1}^{1+r^{2}}\frac{\log t}{1-t}dt\text{.}%
\]
In particular
\begin{equation}\label{s0}
S_{0}\left(  r\right)  =\frac{A_{0}}{4\pi}\log\frac{1}{r^{2}}+B_{0}+O\left(
\log\left(  r^{2}\right)  r^{-2}\right)  \text{ as }r\rightarrow\infty\text{,}%
\end{equation}
where
\begin{equation}
A_{0}=\int_{\mathbb{R}^{2}}\left(  -\Delta S_{0}\right)  dy=4\pi\text{ and
}B_{0}=\frac{\pi^{2}}{6}+2. \label{56}%
\end{equation}

\begin{lemma}
\label{15}It holds%
\begin{equation}
V_{k}\left(  z\right)  =\gamma_{k}-\frac{t_{k}\left(  z\right)  }{\gamma_{k}%
}+\frac{S_{k}\left(  z\right)  }{\gamma_{k}^{3}}+O\left(  \frac{1+t_{k}\left(
z\right)  }{\gamma_{k}^{5}}\right)  \text{,} \label{510}%
\end{equation}
for any $z\in B_{r_{k,\delta}}\left(  x_{k}\right)  $
\end{lemma}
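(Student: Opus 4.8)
\emph{Approach.} Since $V_k$ is radial about $x_k$, the plan is to treat (\ref{3.10}) as a second-order ODE and run the Taylor-expansion scheme of \cite{Malchiodi} (used later in \cite{Man,Has}). Writing $r=|z-x_k|$, I would introduce the error function $\beta_k$ through the ansatz
\begin{equation*}
V_k(z)=\gamma_k-\frac{t_k(z)}{\gamma_k}+\frac{\beta_k(z)}{\gamma_k^{3}},\qquad\text{i.e.}\qquad \beta_k(z)=\gamma_k^{2}\Bigl(\gamma_k\bigl(V_k(z)-\gamma_k\bigr)-\phi_\infty\bigl(\tfrac{z-x_k}{r_k}\bigr)\Bigr),
\end{equation*}
so that the goal becomes: show $\beta_k(z)=S_k(z)+O\bigl((1+t_k(z))/\gamma_k^{2}\bigr)$ on $B_{r_{k,\delta}}(x_k)$, which is exactly (\ref{510}). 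I will repeatedly use the identity $\tfrac{4\pi}{E_k}r_k^{2}=4\gamma_k^{-2}e^{-\gamma_k^{2}}$ coming from (\ref{3.5}), the limit $E_k/\gamma_k^{2}\to S_\Omega^\delta-|\Omega|$ (Lemma \ref{le4.5}), and the smallness $\gamma_k^{p-2}\lambda_ke^{-\gamma_k^{2}}\to0$, $\lambda_k/\gamma_k^{p}\to0$ from (\ref{22}) and (\ref{227}).

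\emph{Step 1: a priori control of $V_k$ and a rough expansion.} First I would show that $V_k$ is radially decreasing on $B_{r_{k,\delta}}(x_k)$ with $(1-2\delta)\gamma_k\le V_k\le\gamma_k$ there for $k$ large. Near $r=0$ the right-hand side of (\ref{3.10}) is positive, since $V_k\approx\gamma_k$ forces $e^{V_k^{2}}\gg\lambda_kV_k^{p-2}$ ($\lambda_k$ being only polynomially large in $\gamma_k$ by (\ref{227})), whence $(rV_k')'<0$ and $V_k'<0$; a continuation argument in $r$, together with the rescaled convergence $\gamma_k\bigl(V_k(x_k+r_ky)-\gamma_k\bigr)\to\phi_\infty$ in $C^{1}_{\mathrm{loc}}(\mathbb R^{2})$ — here immediate from ODE regularity and uniqueness of the radial initial-value problem $-\Delta\phi_\infty=4e^{2\phi_\infty}$, $\phi_\infty(0)=0$, $\nabla\phi_\infty(0)=0$, so no Chen--Li classification is needed — propagates the bounds out to $r_{k,\delta}$, where $t_k=\delta\gamma_k^{2}$ and hence $V_k\ge(1-\delta+o(1))\gamma_k$; on this whole region the perturbation term is negligible against the exponential term. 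With these bounds, $\eta_k:=\gamma_k(V_k-\gamma_k)$ satisfies, after rescaling by $r_k$, $-\Delta\eta_k=4e^{2\phi_\infty}(1+o(1))$ with a radially decreasing, $\log$-type source, and a Green-potential estimate then gives the \emph{rough} bound $|\eta_k-\phi_\infty|\le C(1+t_k)/\gamma_k^{2}$, that is, $|\beta_k|\le C(1+t_k)$ on $B_{r_{k,\delta}}(x_k)$.

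\emph{Step 2: extracting $S_k$.} Rescaling $y=(z-x_k)/r_k$ and setting $\tilde\beta_k(y)=\beta_k(x_k+r_ky)$ (so $t_k=-\phi_\infty$ on this scale), I would substitute the ansatz into (\ref{3.10}), use
\[
V_k^{2}-\gamma_k^{2}=-2t_k+\gamma_k^{-2}\bigl(2\tilde\beta_k+t_k^{2}\bigr)+O\bigl(\gamma_k^{-4}(1+t_k)^{3}\bigr)
\]
(the last step invoking the rough bound $|\tilde\beta_k|\le C(1+t_k)$), expand the exponential, and multiply through by $-\gamma_k^{3}r_k^{2}$; the leading terms cancel and one is left with
\begin{equation*}
-\Delta\tilde\beta_k-8e^{2\phi_\infty}\tilde\beta_k=4e^{2\phi_\infty}\bigl(\phi_\infty^{2}+\phi_\infty\bigr)+R_k\quad\text{on }\{t_k\le\delta\gamma_k^{2}\},
\end{equation*}
with $|R_k(y)|\le C\gamma_k^{-2}e^{2\phi_\infty(y)}\bigl(1+t_k(y)\bigr)^{3}+C\gamma_k^{p}\lambda_ke^{-\gamma_k^{2}}$; the first term is $O(\gamma_k^{-2})$ once the weight is accounted for, and the second is super-exponentially small by (\ref{227}). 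Comparing with (\ref{57}), the discrepancy $\xi_k:=\tilde\beta_k-S_0$ is radial, satisfies $\xi_k(0)=\xi_k'(0)=0$ and $-\Delta\xi_k-8e^{2\phi_\infty}\xi_k=R_k$; since the radial fundamental system of $-\Delta-8e^{2\phi_\infty}$ is explicit — one solution being $\tfrac{1-r^{2}}{1+r^{2}}$, the $\partial_\lambda$-derivative of the standard bubble — variation of parameters together with the weighted bound on $R_k$ would give $|\xi_k(y)|\le C(1+t_k(y))/\gamma_k^{2}$ on $\{t_k\le\delta\gamma_k^{2}\}$. Undoing the rescaling yields $\beta_k=S_k+O\bigl((1+t_k)/\gamma_k^{2}\bigr)$, which is (\ref{510}); the explicit asymptotics (\ref{s0})--(\ref{56}) of $S_0$ are used only to note that $S_k=O(1+t_k)$, so the claimed error term is genuinely of lower order.

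\emph{Main obstacle.} The expansion algebra is routine; the real difficulty is twofold: (i) propagating the a priori monotonicity of $V_k$ and the rough bound $|\beta_k|\lesssim 1+t_k$ all the way out to $r_{k,\delta}$ — a region whose rescaled radius $r_{k,\delta}/r_k$ diverges, so one must argue by ODE continuation rather than locally; and (ii) inverting the linearized operator $-\Delta-8e^{2\phi_\infty}$, which has the nontrivial radial kernel element $\tfrac{1-r^{2}}{1+r^{2}}$, with precisely the right weighted control on the source $R_k$. The $L^p$-perturbation contributes only the term $C\gamma_k^{p}\lambda_ke^{-\gamma_k^{2}}$, which (\ref{227}) renders negligible — the genuine work lies entirely in the radial elliptic/ODE estimates.
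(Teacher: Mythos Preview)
Your proposal is correct and follows the same overall strategy as the paper --- write $V_k=\gamma_k-t_k/\gamma_k+\xi_k/\gamma_k^3$, expand the nonlinearity using (\ref{3.5}) and (\ref{227}) to kill the $\lambda_k$-term, and compare $\xi_k$ with $S_k$ via the linearized operator $-\Delta-8e^{2\phi_\infty}$ --- but the implementation differs in two places worth noting. First, for the a priori bound $|\xi_k|\lesssim 1+t_k$ on the full ball $B_{r_{k,\delta}}(x_k)$ (your Step~1), the paper does \emph{not} establish this separately: instead it defines $\rho_k=\sup\{r\le r_{k,\delta}:|\xi_k-S_k|\le 1+t_k\text{ on }[0,r]\}$, works under this assumption to derive the equation (\ref{get}) and the sharper bound $|\xi_k-S_k|=O((1+t_k)/\gamma_k^2)$, and then concludes $\rho_k=r_{k,\delta}$ by continuity. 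This single bootstrap is more efficient than your two-step plan, since your ``Green-potential estimate'' for the rough bound already requires linearizing $e^{V_k^2}$ around $e^{\gamma_k^2-2t_k}$, which in turn needs the very bound you are after --- so in practice your Step~1 would devolve into the same continuity argument. Second, for the refined estimate (your Step~2), the paper does \emph{not} use variation of parameters: it argues by contradiction, assuming $\gamma_k^2 r_k\|(\xi_k-S_k)'\|_{L^\infty}\to\infty$, rescales to extract a nontrivial radial solution $\tilde\xi$ of $-\Delta\tilde\xi=8e^{2\phi_\infty}\tilde\xi$ with $\tilde\xi(0)=0$, and invokes \cite[Lemma~C.1]{PLA} to force $\tilde\xi\equiv0$. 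Your variation-of-parameters approach is a legitimate alternative and arguably more direct, but it requires controlling the second fundamental solution (which grows like $\log r$) explicitly, whereas the paper's compactness argument avoids this computation entirely.
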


\begin{proof}
Let $\xi_{k}$ be given by
\begin{equation}
V_{k}=\gamma_{k}-\frac{t_{k}}{\gamma_{k}}+\frac{\xi_{k}}{\gamma_{k}^{3}%
}\text{,} \label{B}%
\end{equation}
and $\rho_{k}$ be defined as%
\begin{equation}
\rho_{k}=\sup\left\{  r\in(0,r_{k,\delta}]:\left\vert S_{k}-\xi_{k}\right\vert
\leq1+t_{k}\text{ in }[0,r]\right\}  \text{.} \label{P}%
\end{equation}
Notice that (\ref{s0}) and (\ref{P}) imply $\xi_{k}=O\left(  1+t_{k}\right)  $
in $B_{\rho_{k}}\left(  x_{k}\right)  $. In particular, from (\ref{B}) we get%
\begin{equation}
V_{k}=\gamma_{k}-\frac{t_{k}}{\gamma_{k}}+\frac{O\left(  1+t_{k}\right)
}{\gamma_{k}^{3}}\text{,} \label{extend}%
\end{equation}
and
\begin{equation}
V_{k}^{2}=\gamma_{k}^{2}-2t_{k}+\frac{t_{k}^{2}+2\xi_{k}}{\gamma_{k}^{2}%
}+O\left(  \frac{1+t_{k}^{2}}{\gamma_{k}^{4}}\right)  \label{squel}%
\end{equation}
in $B_{\rho_{k}}\left(  x_{k}\right)  $. Then, we have%
\begin{equation}
\exp\left(  \frac{t_{k}^{2}+2\xi_{k}}{\gamma_{k}^{2}}+O\left(  \frac
{1+t_{k}^{2}}{\gamma_{k}^{4}}\right)  \right)  =1+\frac{t_{k}^{2}+2\xi_{k}%
}{\gamma_{k}^{2}}+O\left(  \frac{\left(  1+t_{k}^{4}\right)  \exp\left(
t_{k}^{2}/\gamma_{k}^{2}\right)  }{\gamma_{k}^{4}}\right)  \label{exp1}%
\end{equation}
in $B_{\rho_{k}}\left(  x_{k}\right)  $. By using (\ref{3.5}), (\ref{extend}),
(\ref{squel}) and (\ref{exp1}), we have%
\begin{equation}
\frac{4\pi}{E_{k}}V_{k}\exp\left(  V_{k}^{2}\right)  =\frac{4\exp\left(
-2t_{k}\right)  }{r_{k}^{2}\gamma_{k}}\left[  1+\frac{2\xi_{k}+t_{k}^{2}%
-t_{k}}{\gamma_{k}^{2}}+O\left(  \frac{\left(  1+t_{k}^{4}\right)  \exp\left(
t_{k}^{2}/\gamma_{k}^{2}\right)  }{\gamma_{k}^{4}}\right)  \right]
\label{main}%
\end{equation}
in $B_{\rho_{k}}\left(  x_{k}\right)  $. Now, we claim that
\begin{equation}
\frac{4\pi}{E_{k}}\frac{p}{2\left(  4\pi\right)  ^{\frac{p}{2}}}\lambda
_{k}V_{k}^{p-1}=o\left(  \frac{\exp\left(  -2t_{k}\right)  \exp\left(
t_{k}^{2}/\gamma_{k}^{2}\right)  }{\gamma_{k}^{5}r_{k}^{2}}\right)  \text{.}
\label{o}%
\end{equation}
Indeed,
\[
\frac{\exp\left(  t_{k}\left(  -2+t_{k}/\gamma_{k}^{2}\right)  \right)
}{r_{k}^{2}}=\exp\left(  \gamma_{k}-\frac{t_{k}}{\gamma_{k}}\right)  ^{2}%
\frac{\pi}{E_{k}}\gamma_{k}^{2}\text{.}%
\]
Since $t_{k}\left(  z\right)  \leq\delta\gamma_{k}^{2}$ for any $z\in
B_{r_{k,\delta}}\left(  x_{k}\right)  $, by (\ref{227}) we have
\[
\frac{\frac{p}{2\left(  4\pi\right)  ^{\frac{p}{2}}}\lambda_{k}\left(
\gamma_{k}-\frac{t_{k}}{\gamma_{k}}+O\left(  \frac{1+t_{k}}{\gamma_{k}^{3}%
}\right)  \right)  ^{p-1}\frac{4\pi}{E_{k}}\gamma_{k}^{5}}{\exp\left(
\gamma_{k}-\frac{t_{k}}{\gamma_{k}}\right)  ^{2}\frac{\pi}{E_{k}}\gamma
_{k}^{2}}\leq o\left(  \frac{\gamma_{k}^{2p+2}}{\exp\left(  \left(
1-\delta\right)  ^{2}\gamma_{k}^{2}\right)  }\right)  \rightarrow0,
\]
as $k\rightarrow\infty$. Thus, the claim (\ref{o}) is proved.

Combining (\ref{o}), (\ref{3.10}), (\ref{B}) and (\ref{main}), we get%
\begin{equation}
-\Delta\xi_{k}=\frac{4\exp\left(  -2t_{k}\right)  }{r_{k}^{2}}\left(  2\xi
_{k}+t_{k}^{2}-t_{k}+O\left(  \frac{\left(  1+t_{k}^{4}\right)  \exp\left(
t_{k}^{2}/\gamma_{k}^{2}\right)  }{\gamma_{k}^{2}}\right)  \right)
\label{get}%
\end{equation}
in $B_{\rho_{k}}\left(  x_{k}\right)  $.

Next, we estimate the function $\left\vert \xi_{k}-S_{k}\right\vert $. From
(\ref{57}) and (\ref{get}), we have%
\begin{equation}
-\Delta\left(  \xi_{k}-S_{k}\right)  =\frac{8\exp\left(  -2t_{k}\right)
}{r_{k}^{2}}\left[  \left(  \xi_{k}-S_{k}\right)  +O\left(  \frac{\left(
1+t_{k}^{4}\right)  \exp\left(  t_{k}^{2}/\gamma_{k}^{2}\right)  }{\gamma
_{k}^{2}}\right)  \right]  \label{505}%
\end{equation}
for all $0\leq r\leq$ $\rho_{k}$. Observe that
\begin{equation}
\int_{B_{r}\left(  x_{k}\right)  }\left(  -\Delta\left(  \xi_{k}-S_{k}\right)
\right)  dy=-2\pi r\left(  \xi_{k}-S_{k}\right)  ^{\prime}\left(  r\right)
\text{.} \label{observe}%
\end{equation}

Now, we estimate the integral of the functions on the right-hand side of the
equations (\ref{505}) over the balls $B_{\rho_{k}}\left(  x_{k}\right)  $.
Since $2-\frac{t_{k}}{\gamma_{k}^{2}}\geq2-\delta>1$ by (\ref{55}), there
exists some $\alpha>1$ and $C>0$ such that
\begin{equation}
\left(  1+t_{k}^{4}\right)  \exp\left(  t_{k}\left(  -2+t_{k}/\gamma_{k}%
^{2}\right)  \right)  \leq C\exp\left(  -\alpha t_{k}\right)  \label{T}%
\end{equation}
in $B_{\rho_{k}}\left(  x_{k}\right)  $. Using (\ref{T}), we can get%
\begin{equation}
\int_{B_{r}\left(  x_{k}\right)  }\frac{8\left(  1+t_{k}^{4}\right)
\exp\left(  t_{k}\left(  -2+t_{k}/\gamma_{k}^{2}\right)  \right)  }{r_{k}^{2}%
}dy\leq C_{\alpha}\left(  1-\left(  1+\left(  \frac{r}{r_{k}}\right)
^{2}\right)  ^{1-\alpha}\right)  \text{.} \label{2}%
\end{equation}
It remains to compute the integral of first term in (\ref{505}). Since
$$\left\vert \left(  \xi_{k}-S_{k}\right)  \left(  r\right)  \right\vert
\leq\left\Vert \left(  \xi_{k}-S_{k}\right)  ^{\prime}\right\Vert _{L^{\infty
}\left(  \left[  0,\rho_{k}\right]  \right)  }r,$$ then%
\begin{equation}
\int_{B_{r}\left(  x_{k}\right)  }\frac{8\exp\left(  -2t_{k}\right)  }%
{r_{k}^{2}}\left\vert \xi_{k}-S_{k}\right\vert dy\leq8\pi\left\Vert \left(
\xi_{k}-S_{k}\right)  ^{\prime}\right\Vert _{L^{\infty}\left(  \left[
0,\rho_{k}\right]  \right)  }r_{k}\left(  \arctan\left(  \frac{r}{r_{k}%
}\right)  -\frac{\frac{r}{r_{k}}}{1+\left(  \frac{r}{r_{k}}\right)  ^{2}%
}\right)  \text{.} \label{506}%
\end{equation}
Then by (\ref{observe}), (\ref{505}), (\ref{2}) and (\ref{506}), there exists
a constant $C^{^{\prime}}>1$ such that%
\begin{equation}
\frac{r\left\vert \left(  \xi_{k}-S_{k}\right)  ^{\prime}\left(  r\right)
\right\vert }{C^{^{\prime}}}\leq\frac{\left(  \frac{r}{r_{k}}\right)  ^{2}%
}{\gamma_{k}^{2}\left(  1+\left(  \frac{r}{r_{k}}\right)  ^{2}\right)  }%
+\frac{r_{k}\left\Vert \left(  \xi_{k}-S_{k}\right)  ^{\prime}\right\Vert
_{L^{\infty}\left(  \left[  0,\rho_{k}\right]  \right)  }\left(  \frac
{r}{r_{k}}\right)  ^{3}}{1+\left(  \frac{r}{r_{k}}\right)  ^{3}} \label{6}%
\end{equation}
for all $0\leq r\leq$ $\rho_{k}$. Now we show that%
\begin{equation}
r_{k}\left\Vert \left(  \xi_{k}-S_{k}\right)  ^{\prime}\right\Vert
_{L^{\infty}\left(  \left[  0,\rho_{k}\right]  \right)  }=O\left(  \frac
{1}{\gamma_{k}^{2}}\right)  \text{.} \label{4}%
\end{equation}
We prove this by contradiction and assume that
\begin{equation}
\gamma_{k}^{2}r_{k}\left\Vert \left(  \xi_{k}-S_{k}\right)  ^{\prime
}\right\Vert _{L^{\infty}\left(  \left[  0,\rho_{k}\right]  \right)  }%
=\gamma_{k}^{2}r_{k}\left\vert \left(  \xi_{k}-S_{k}\right)  ^{\prime}\left(
s_{k}\right)  \right\vert \rightarrow\infty,\text{ as }k\rightarrow
\infty\label{5}%
\end{equation}
for some $s_{k}\in\left[  0,\rho_{k}\right]  $. Indeed, from (\ref{6}) and
(\ref{5}), we immediately obtain
\begin{equation}
s_{k}=O\left(  r_{k}\right)  \text{ and }r_{k}=O\left(  s_{k}\right)  \text{,}
\label{300}%
\end{equation}
this implies that there exists $\alpha_{0}\in(0,+\infty]$ such that
$\frac{\rho_{k}}{r_{k}}\rightarrow$ $\alpha_{0}$ as $k\rightarrow\infty$.

Let $\tilde{\xi}_{k}$ be given by
\[
\tilde{\xi}_{k}\left(  s\right)  =\frac{\left(  \xi_{k}-S_{k}\right)  \left(
r_{k}s\right)  }{r_{k}\left\Vert \left(  \xi_{k}-S_{k}\right)  ^{\prime
}\right\Vert _{L^{\infty}\left(  \left[  0,\rho_{k}\right]  \right)  }%
}\text{.}%
\]
Then by (\ref{6}) and (\ref{5}), there exists a constant $C_{1}>0$ such that%
\begin{equation}
\left\vert \tilde{\xi}_{k}^{\prime}\left(  s\right)  \right\vert \leq
\frac{C_{1}}{1+s}\text{ in }\left[  0,\rho_{k}/r_{k}\right]  \text{.}\label{7}%
\end{equation}
Combining (\ref{505}), (\ref{T}) and (\ref{7}), using the standard elliptic
regularity theory, we get that
\begin{equation}
\tilde{\xi}_{k}\rightarrow\tilde{\xi}\text{ in }C_{loc}^{1}\left(
B_{\alpha_{0}}\left(  0\right)  \right)  \text{ as }k\rightarrow\infty
\text{,}\label{10}%
\end{equation}
for some $\tilde{\xi}\in C_{loc}^{1}\left(  B_{\alpha_{0}}\left(  0\right)
\right)  $ satisfying%
\begin{equation*}
\left\{
\begin{array}
[c]{c}%
-\Delta\tilde{\xi}=8\exp\left(  2\phi_{\infty}\right)  \tilde{\xi}\text{ in
}B_{\alpha_{0}}\left(  0\right)  \text{,}\\
\tilde{\xi}\left(  0\right)  =0\text{,}\\
\tilde{\xi}\text{ radially symmetric around }0\in\mathbb{R}^{2}\text{.}%
\end{array}
\right.  %
\end{equation*}
From \cite[Lemma C.1.]{PLA}, one can obtain
\begin{equation}
\tilde{\xi}\equiv0\text{ in }B_{\alpha_{0}}\left(  0\right)  \text{.}\label{9}%
\end{equation}

Now, we can improve the estimates  in (\ref{6}) from (\ref{9}). Indeed, using
(\ref{7}), (\ref{10}), (\ref{9}) and the dominated convergence theorem, we can
obtain%
\begin{equation}
\int_{B_{\rho_{k}}\left(  x_{k}\right)  }\frac{\exp\left(  -2t_{k}\right)
}{r_{k}^{2}}\left\vert \xi_{k}-S_{k}\right\vert dy=o\left(  r_{k}\left\Vert
\left(  \xi_{k}-S_{k}\right)  ^{\prime}\right\Vert _{L^{\infty}\left(  \left[
0,\rho_{k}\right]  \right)  }\right)  \text{.} \label{11}%
\end{equation}
By repeating the argument for (\ref{6}), replacing (\ref{506}) with (\ref{11}),
and utilizing (\ref{5}), we get%
\begin{equation}
r\left\vert \left(  \xi_{k}-S_{k}\right)  ^{\prime}\left(  r\right)
\right\vert =o\left(  r_{k}\left\vert \left\vert \left(  \xi_{k}-S_{k}\right)
^{\prime}\right\vert \right\vert _{L^{\infty}\left(  \left[  0,\rho
_{k}\right]  \right)  }\right)  , \label{12}%
\end{equation}
for all $0\leq r\leq$ $\rho_{k},$ as $k\rightarrow\infty$. If we choose
$r=s_{k}$ in (\ref{12}), then we immediately have $s_{k}=o\left(
r_{k}\right)  $, but this contradicts to (\ref{300}). Hence, (\ref{4}) is proved.

Now, plugging (\ref{4}) into (\ref{6}), using the fact $\xi_{k}\left(
0\right)  =S_{k}\left(  0\right)  =0$ and the fundamental theorem of calculus,
we obtain that%
\[
\left\vert \left\vert \left(  \xi_{k}-S_{k}\right)  \right\vert \right\vert
_{L^{\infty}\left(  \left[  0,\rho_{k}\right]  \right)  }=O\left(
\frac{1+t_{k}}{\gamma_{k}^{2}}\right)
\]
as $k\rightarrow\infty$, which together with (\ref{P}) yields $\rho
_{k}=r_{k,\delta\text{ }}$and we can conclude the proof.
\end{proof}

\subsection{A compare principle}

Now, we define $\bar{v}_{k}$ by%
\begin{equation}
\bar{v}_{k}\left(  z\right)  =\frac{1}{2\pi\left\vert x_{k}-z\right\vert }%
\int_{\partial B_{\left\vert x_{k}-z\right\vert }\left(  x_{k}\right)  }%
v_{k}\left(  y\right)  d\sigma\left(  y\right)  \label{246}%
\end{equation}
for all $z\in\Omega\backslash\left\{  x_{k}\right\}  $ and $\bar{v}_{k}\left(
x_{k}\right)  =v_{k}\left(  x_{k}\right)  $, and set
\begin{equation}
R_{k\text{ }}:=\sup\left\{  r\in(0,r_{k,\delta}]:\left\vert \bar{v}_{k}%
-V_{k}\right\vert \leq\frac{\eta}{\gamma_{k}}\text{ in }B_{r}\left(
x_{k}\right)  \text{ for }0<\eta<1\right\}  \text{,} \label{213}%
\end{equation}

First, we estimate the value of $v_{k}$ on $B_{R_{k}}\left(  x_{k}\right)  $.

\begin{lemma}
\label{90}It holds that%
\[
\underset{B_{R_{k}}\left(  x_{k}\right)  }{\min}v_{k}\geq\left(
1-\delta+o_{k}\left(  1\right)  \right)  \gamma_{k}\text{.}%
\]

\end{lemma}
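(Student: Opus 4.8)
The plan is to deduce the bound sphere by sphere: for each $r\in[0,R_k]$ I will bound $\min_{\partial B_r(x_k)}v_k$ from below by combining (i) the sharp radial expansion of Lemma \ref{15}, (ii) the defining closeness $|\bar v_k-V_k|\le \eta/\gamma_k$ on $B_{R_k}(x_k)$ built into (\ref{213}), and (iii) the gradient estimate of Lemma \ref{85}, which controls the oscillation of $v_k$ on spheres and thus lets us pass from the spherical average $\bar v_k$ back to the genuine pointwise minimum.

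First I would record a uniform lower bound for the radial profile. For $z\in B_{r_{k,\delta}}(x_k)$ one has $t_k(z)\le\delta\gamma_k^{2}$ by (\ref{55}), while $|S_k(z)|\le C(1+t_k(z))$ on this ball by the asymptotics (\ref{s0}) (recall $S_0(r)=-2\log r+B_0+o(1)$ and $t_k=2\log(r/r_k)+O(1)$ for $r\gg r_k$, so $S_k\approx -t_k+B_0$ there, while $S_k$ is plainly bounded on $B_{Cr_k}(x_k)$). Substituting into (\ref{510}) gives, uniformly on $B_{r_{k,\delta}}(x_k)$,
\[
V_k(z)\ge \gamma_k-\frac{t_k(z)}{\gamma_k}-\frac{C(1+t_k(z))}{\gamma_k^{3}}-O\!\left(\frac{1+t_k(z)}{\gamma_k^{5}}\right)\ge(1-\delta)\gamma_k-O(\gamma_k^{-1})=(1-\delta+o_k(1))\gamma_k .
\]
Since $B_{R_k}(x_k)\subseteq B_{r_{k,\delta}}(x_k)$ and $0<\eta<1$, the definition (\ref{213}) of $R_k$ then yields $\bar v_k(z)\ge V_k(z)-\eta/\gamma_k\ge(1-\delta+o_k(1))\gamma_k$ for every $z\in B_{R_k}(x_k)$, with an $o_k(1)$ that does not depend on $z$.

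Next, fix $r\in(0,R_k]$ and set $m(r)=\min_{\partial B_r(x_k)}v_k$. Assume for the moment that $m(r)>\log\gamma_k$, so that $\partial B_r(x_k)\subset\tilde{\Omega}_k$ and Lemma \ref{85} applies on this circle (when $1\le p<2$ this hypothesis is empty). Integrating $|\nabla v_k|\le C\,(|x-x_k|\,v_k)^{-1}$ along the shorter arc of $\partial B_r(x_k)$ joining a minimum point to a maximum point gives the oscillation bound $\max_{\partial B_r(x_k)}v_k-m(r)\le \pi C/m(r)$, and since the spherical mean satisfies $m(r)\le\bar v_k(r)\le\max_{\partial B_r(x_k)}v_k$ we obtain
\[
(1-\delta+o_k(1))\gamma_k\le \bar v_k(r)\le m(r)+\frac{\pi C}{m(r)} .
\]
Multiplying by $m(r)>0$ gives $m(r)^2-(1-\delta+o_k(1))\gamma_k\,m(r)+\pi C\ge0$, whose discriminant is positive for $k$ large; hence for such $k$, either $m(r)\le C'\gamma_k^{-1}$ or $m(r)\ge(1-\delta+o_k(1))\gamma_k$, with no intermediate values allowed.

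Finally I would close the argument by a connectedness bootstrap in $r$, which simultaneously disposes of the $p=2$ restriction in Lemma \ref{85}. Let $A=\{r\in[0,R_k]:m(r)>\log\gamma_k\}$. It contains a neighborhood of $0$ because $m(0)=v_k(x_k)=\gamma_k$, and it is open in $[0,R_k]$ since $m$ is continuous. On $A$ the dichotomy of the previous paragraph rules out $m(r)\le C'\gamma_k^{-1}$ (which would contradict $r\in A$ once $k$ is large), so $m(r)\ge(1-\delta+o_k(1))\gamma_k>\log\gamma_k$ on $A$; this last inequality also shows $A$ is closed in $[0,R_k]$. Being nonempty, open and closed in the connected interval $[0,R_k]$, $A=[0,R_k]$, and therefore $m(r)\ge(1-\delta+o_k(1))\gamma_k$ for every $r\in[0,R_k]$, which is exactly $\min_{B_{R_k}(x_k)}v_k\ge(1-\delta+o_k(1))\gamma_k$. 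I expect the $p=2$ case to be the main obstacle: there Lemma \ref{85} is valid only on $\{v_k\ge\log\gamma_k\}$, and it is precisely the bootstrap just described that guarantees a posteriori that $v_k$ never approaches $\log\gamma_k$ inside $B_{R_k}(x_k)$, so that the oscillation estimate may legitimately be invoked throughout.
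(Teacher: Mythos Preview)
Your proof is correct and follows essentially the same strategy as the paper: both combine the expansion of $V_k$ from Lemma~\ref{15}, the defining inequality $|\bar v_k-V_k|\le\eta/\gamma_k$ on $B_{R_k}(x_k)$, and the gradient bound of Lemma~\ref{85}, and close with a continuity bootstrap. The only cosmetic difference is the bookkeeping of the bootstrap: the paper introduces an auxiliary radius $\tilde R_k$ defined by the gradient condition $\tfrac12(1-\delta)\gamma_k|x-x_k||\nabla v_k|\le C$ and shows $\tilde R_k=R_k$, whereas you track the set $A=\{r:m(r)>\log\gamma_k\}$ and derive the lower bound via an explicit quadratic dichotomy; both are equivalent open--closed arguments on $[0,R_k]$, and your formulation has the minor advantage of making the $p=2$ restriction in Lemma~\ref{85} visibly self-consistent.
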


\begin{proof}
Decomposing $v_{k}$ into
\begin{equation}
v_{k}=V_{k}+\omega_{k}\text{.} \label{87}%
\end{equation}
Since $-\Delta V_{k}\geq0$ in $\left[  0,r_{k,\delta}\right]  $ from
 (\ref{main}) and (\ref{o}), then $V_{k}$ is radially decreasing
in $B_{r_{k,\delta}}\left(  x_{k}\right)  $. Using (\ref{510}), we can get
\begin{equation}
\underset{B_{R_{k}}\left(  x_{k}\right)  }{\min}V_{k}\geq\left(
1-\delta+o_{k}\left(  1\right)  \right)  \gamma_{k}\text{.} \label{yy}%
\end{equation}
We define a radius $\tilde{R}_{k}$ by%
\begin{equation}
\tilde{R}_{k}=\sup\left\{  r\in(0,R_{k\text{ }}]:\frac{1}{2}\left(
1-\delta\right)  \gamma_{k}\left\vert x-x_{k}\right\vert \left\vert \nabla
v_{k}\left(  x\right)  \right\vert \leq C\text{ in }B_{r}\left(  x_{k}\right)
\right\}  \label{86}%
\end{equation}
where $C$ is given in Lemma \ref{85}.

Now, we claim that $\tilde{R}_{k}=R_{k}$. By employing (\ref{86}), we can get%
\begin{equation}
\left\vert \bar{v}_{k}-v_{k}\right\vert \leq\frac{2C}{\left(  1-\delta\right)
\gamma_{k}}\pi\text{ in }B_{\tilde{R}_{k}}\left(  x_{k}\right)  . \label{af}%
\end{equation}
Using the definition of $R_{k}$, from (\ref{af}) and (\ref{87}), we can derive
the following estimates:%
\[
\left\vert \omega_{k}\right\vert =\left\vert v_{k}-V_{k}\right\vert
\leq\left(  \eta+\frac{2C\pi}{1-\delta}\right)  \gamma_{k}^{-1}\text{ in
}B_{\tilde{R}_{k}}\left(  x_{k}\right)  ,
\]
hence by (\ref{yy}) we can obtain a lower bound of $v_{k}$:%
\[
\underset{B_{\tilde{R}_{k}}\left(  x_{k}\right)  }{\min}v_{k}\geq\left(
1-\delta+o_{k}\left(  1\right)  \right)  \gamma_{k}\text{,}%
\]
which together with Lemma \ref{85} imply%
\begin{equation}
\left(  1-\delta+o_{k}\left(  1\right)  \right)  \gamma_{k}\left\vert
x-x_{k}\right\vert \left\vert \nabla v_{k}\left(  x\right)  \right\vert \leq
C\text{ in }B_{\tilde{R}_{k}}\left(  x_{k}\right)  \text{.} \label{ag}%
\end{equation}
If $\tilde{R}_{k}<R_{k}$, from (\ref{ag}) we can find some $r=\left\vert
x-x_{k}\right\vert >\tilde{R}_{k}$ such that%
\[
\frac{1}{2}\left(  1-\delta\right)  \gamma_{k}\left\vert x-x_{k}\right\vert
\left\vert \nabla v_{k}\left(  x\right)  \right\vert \leq C\text{,}%
\]
which is a contradiction with (\ref{86}), hence $\tilde{R}_{k}=R_{k}$, and the
proof is finished.
\end{proof}

\begin{lemma}
[Comparison principle]\label{507}Let $\bar{v}_{k}$ be defined in (\ref{246}).
Then
\[
\left\vert \left\vert \bar{v}_{k}-V_{k}\right\vert \right\vert _{L^{\infty
}\left(  B_{r_{k,\delta}}\left(  x_{k}\right)  \right)  }=o\left(  \frac
{1}{\gamma_{k}}\right)  \text{.}%
\]
Moreover, we have
\[
\left\vert \left\vert \nabla\left(  v_{k}-V_{k}\right)  \right\vert
\right\vert _{_{L^{\infty}\left(  B_{r_{k,\delta}}\left(  x_{k}\right)
\right)  }}=O\left(  \frac{1}{\gamma_{k}r_{k,\delta\text{ }}}\right)  \text{,}%
\]
and there exists a constant $C>0$ such that%
\begin{equation*}
\left\vert v_{k}-V_{k}\right\vert \leq C\frac{\left\vert \cdot-x_{k}%
\right\vert }{\gamma_{k}r_{k,\delta\text{ }}}\text{ in }B_{r_{k,\delta}%
}\left(  x_{k}\right)  \text{.} %
\end{equation*}

\end{lemma}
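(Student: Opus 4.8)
The plan is to first upgrade the definition (\ref{213}) to the identity $R_k=r_{k,\delta}$ by a continuity argument, and then to deduce the gradient and pointwise estimates from a harmonic approximation of $v_k-V_k$ inside $B_{r_{k,\delta}}(x_k)$. The structural starting point is that $\bar v_k$, being a spherical average, satisfies the \emph{radial} equation $-\tfrac1r(r\bar v_k')'=\overline{f_k(v_k)}$ on $(0,r_{k,\delta})$, where $f_k(v)=\tfrac{4\pi}{E_k}(ve^{v^2}-\tfrac{p}{2(4\pi)^{p/2}}\lambda_k v^{p-1})$ and the bar denotes the average over $\partial B_r(x_k)$; since $V_k$ solves the same equation with the un-averaged right-hand side and $V_k(x_k)=\gamma_k=\bar v_k(x_k)$, the difference $w_k:=\bar v_k-V_k$ is radial with $w_k(x_k)=0$, $w_k'(0)=0$, so that
\[
w_k(r)=-\int_0^r\frac1t\Bigl(\int_0^t s\,\bigl(\overline{f_k(v_k)}-f_k(V_k)\bigr)(s)\,ds\Bigr)dt .
\]
On $B_{R_k}(x_k)$ --- intersected with $\tilde{\Omega}_k$ in the case $p=2$, as in Lemmas \ref{52} and \ref{85} --- Lemma \ref{90} gives $v_k\ge(1-\delta+o_k(1))\gamma_k$, hence Lemma \ref{85} yields $|\nabla v_k|\le C(|x-x_k|\gamma_k)^{-1}$; integrating the tangential derivative along circles gives $\|v_k-\bar v_k\|_{L^\infty(B_{R_k})}=O(\gamma_k^{-1})$, and combining with (\ref{213}) also $\|v_k-V_k\|_{L^\infty(B_{R_k})}=O(\gamma_k^{-1})$.

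I would then study $\psi_k:=\gamma_k(v_k-V_k)$, which is bounded on $B_{R_k}$ by the above. At the concentration scale $r_k$, Lemma \ref{15} together with the definition of $\phi_k$ preceding (\ref{18}) gives $\psi_k(x_k+r_k\cdot)=\phi_k-\phi_\infty+O(\gamma_k^{-2}(1+t_k))$ on compact sets, so (\ref{214}) forces $\psi_k(x_k+r_k\cdot)\to 0$ in $C^1_{\mathrm{loc}}(\mathbb{R}^2)$. A Taylor expansion of $f_k(v_k)-f_k(V_k)$, using the formula (\ref{main}) for $\tfrac{4\pi}{E_k}V_ke^{V_k^2}$, the bound $e^{-2t_k}e^{t_k^2/\gamma_k^2}\le e^{-(2-\delta)t_k}$ valid on $B_{r_{k,\delta}}$ by (\ref{55}), and, for the Lebesgue piece, (\ref{227}) together with the argument of claim (\ref{o}), shows that $\psi_k$ solves on $B_{R_k}(x_k)$ an equation of the shape $-\Delta\psi_k=\mathcal L_k\,(e^{2\psi_k}-1)+O(\delta)\,\mathcal L_k+o_k(\gamma_k^{-1}\mathcal L_k)$, where $\mathcal L_k\ge0$ has $L^1(B_{r_{k,\delta}})$-mass bounded uniformly in $k$, concentrates at $x_k$, and after rescaling by $r_k$ is $O(\delta)$-close in $L^1$ to the linearised Liouville potential $4e^{2\phi_\infty}$. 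Feeding the $C^1_{\mathrm{loc}}$-vanishing of $\psi_k(x_k+r_k\cdot)$ (which serves as ``initial data''), the bounded concentrated mass of $\mathcal L_k$, the non-degeneracy of the limiting linearised operator (its only bounded radial solution vanishing at the origin is $0$, cf.\ \cite{PLA}, \cite{Chenli}), and a finite iteration in the small parameter $\delta$, into the above ODE representation of $w_k$ yields $\|w_k\|_{L^\infty(B_{R_k})}=\|\bar v_k-V_k\|_{L^\infty(B_{R_k})}=o(\gamma_k^{-1})$. Since $\gamma_k\|\bar v_k-V_k\|_{L^\infty(B_{R_k})}\to 0<\eta$, for $k$ large the inequality in (\ref{213}) holds with strict inequality on $\overline{B_{R_k}(x_k)}$, so by continuity it persists on a strictly larger concentric ball, which is impossible unless $R_k=r_{k,\delta}$. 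This proves the first assertion.

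With $R_k=r_{k,\delta}$, Lemma \ref{90} holds on all of $B_{r_{k,\delta}}(x_k)$, so Lemma \ref{85} gives $|\nabla v_k|\le C(|x-x_k|\gamma_k)^{-1}$ there, and the explicit expansion of $V_k$ in Lemma \ref{15} gives the same bound for $|\nabla V_k|$. I would then split $\omega_k:=v_k-V_k=h_k+N_k$ in $B_{r_{k,\delta}}(x_k)$, where $-\Delta N_k=f_k(v_k)-f_k(V_k)$ with $N_k=0$ on $\partial B_{r_{k,\delta}}$ and $h_k$ is harmonic with $h_k=\omega_k$ on $\partial B_{r_{k,\delta}}$. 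By the previous step $\psi_k=\gamma_k\omega_k=o(1)$, so the source $f_k(v_k)-f_k(V_k)$ is concentrated at $x_k$ with total and dyadic-annular masses small enough for Newtonian-potential estimates to bound $N_k$ and $\nabla N_k$ by $O((\gamma_k r_{k,\delta})^{-1})|x-x_k|$ and $O((\gamma_k r_{k,\delta})^{-1})$ respectively; meanwhile $h_k$ is harmonic, satisfies $\|h_k\|_{L^\infty(\partial B_{r_{k,\delta}})}\le\|\omega_k\|_{L^\infty}=O(\gamma_k^{-1})$ and $h_k(x_k)=\omega_k(x_k)-N_k(x_k)=-N_k(x_k)$ is $o(\gamma_k^{-1})$, so interior and boundary gradient estimates for harmonic functions (matching $\nabla v_k-\nabla V_k$ on $\partial B_{r_{k,\delta}}$) bound $\nabla h_k$ by $O((\gamma_k r_{k,\delta})^{-1})$ as well. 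Summing and using $\omega_k(x_k)=0$ gives $\|\nabla(v_k-V_k)\|_{L^\infty(B_{r_{k,\delta}})}=O((\gamma_k r_{k,\delta})^{-1})$ and then, by integrating from $x_k$, $|v_k-V_k|\le C|x-x_k|/(\gamma_k r_{k,\delta})$.

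The step I expect to be the main obstacle is the second paragraph, i.e.\ controlling $\psi_k=\gamma_k(v_k-V_k)$ \emph{quantitatively} over the whole radius $r_{k,\delta}$, which may be vastly larger than the concentration scale $r_k$. At scale $r_k$ the bubbles $\phi_k$, $\phi_\infty$ give the smallness for free, but propagating it outward is exactly where the lack of radial symmetry of $v_k$ enters, and it demands a delicate analysis of the linearised Liouville equation for $\psi_k$ --- with the spurious $O(\delta)$ in its potential absorbed by first choosing $\delta$ small and iterating, and with the sign- and monotonicity-destroying Lebesgue perturbation carried throughout as a lower order term via (\ref{227}) and (\ref{o}). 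This is precisely why the radial average $\bar v_k$ and the preparatory Lemmas \ref{85} and \ref{90} are introduced.
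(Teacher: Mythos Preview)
Your overall architecture---first upgrade $R_k$ to $r_{k,\delta}$, then run a harmonic decomposition on the full ball for the gradient bound---matches the paper. The gap is in your second paragraph. The source in the ODE for $w_k=\bar v_k-V_k$ is $\overline{f_k(v_k)}-f_k(V_k)$, and even after exploiting that the linear term in $v_k-\bar v_k$ averages to zero, the quadratic remainder $\tfrac12 f_k''(\bar v_k)\,\overline{(v_k-\bar v_k)^2}$ is of order $\gamma_k^{-1}r_k^{-2}e^{-2t_k}$, since Lemma~\ref{85} only gives $|v_k-\bar v_k|=O(\gamma_k^{-1})$ at this stage. Integrating $|w_k'(r)|\le\tfrac1r\int_0^r s\cdot C\gamma_k^{-1}r_k^{-2}e^{-2t_k}\,ds\le C/(\gamma_k r)$ from scale $r_k$ to $R_k$ produces the factor $\log(R_k/r_k)$, which by (\ref{212}) is of order $\gamma_k^2$; this swamps the target $o(\gamma_k^{-1})$, and the a priori bound $|w_k|\le\eta/\gamma_k$ from (\ref{213}) cannot close a Gr\"onwall loop against a coefficient of size $\gamma_k^2$. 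Your ``finite iteration in $\delta$'' does not help, since $\delta$ is fixed in the definition of $r_{k,\delta}$ and changing it only rescales the exponent, not the $\gamma_k^2$ factor. Likewise, your equation $-\Delta\psi_k=\mathcal L_k(e^{2\psi_k}-1)+\cdots$ cannot be linearised directly because $\psi_k$ is only $O(1)$ a priori on $B_{R_k}$.

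The paper resolves this by reversing the order of your second and third paragraphs: it introduces the harmonic comparison $\phi_k$ (your $h_k$), with $\phi_k=\omega_k:=v_k-V_k$ on $\partial B_{R_k}(x_k)$, \emph{before} knowing $R_k=r_{k,\delta}$, and works with $\omega_k-\phi_k$ rather than with the radial ODE. Green's representation, after splitting $B_{R_k}$ into $\{t_k\le\gamma_k^2/4\}$ and its complement and distinguishing $|y-x_k|=O(r_k)$ from $|y-x_k|/r_k\to\infty$, gives the self-improving bound
\[
|\nabla(\omega_k-\phi_k)(y)|\le C\,\|\nabla\omega_k\|_{L^\infty(B_{R_k})}\Bigl(\frac{r_k}{r_k+|y-x_k|}+o(\gamma_k^{-1})\Bigr).
\]
This decay forces the maximum of $|\nabla(\omega_k-\phi_k)|$ to sit at scale $r_k$; a contradiction argument rescaling there (where $\gamma_k r_k\alpha_k\to 0$ by (\ref{214})) then reduces to the linearised Liouville equation, and the non-degeneracy result you cite gives the trivial limit, hence $\|\nabla(\omega_k-\phi_k)\|_{L^\infty}=o(\gamma_k^{-1}R_k^{-1})$ and $\|\omega_k-\phi_k\|_{L^\infty}=o(\gamma_k^{-1})$. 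The punchline is then one line of mean-value: since $\phi_k$ is harmonic and equals $\omega_k$ on $\partial B_{R_k}$, one has $\bar v_k(R_k)-V_k(R_k)=\phi_k(x_k)=-(\omega_k-\phi_k)(x_k)=o(\gamma_k^{-1})$, contradicting (\ref{47}) unless $R_k=r_{k,\delta}$. In short, your $h_k+N_k$ split is exactly the paper's $\phi_k+(\omega_k-\phi_k)$, but in the paper it is the engine that drives $R_k=r_{k,\delta}$, not a consequence of it.
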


\begin{proof}
For any fixed $\eta\in\left(  0,1\right)  $, let $R_{k\text{ }}$ be given by
(\ref{213}), from Lemma \ref{15} and (\ref{214}), one can easily verify
\begin{equation}
\frac{R_{k\text{ }}}{r_{k}}\rightarrow\infty\text{ as }k\rightarrow
\infty\text{.} \label{43}%
\end{equation}
From (\ref{213}), we have
\begin{equation}
\left\vert \bar{v}_{k}\left(  r\right)  -V_{k}\left(  r\right)  \right\vert
\leq\frac{\eta}{\gamma_{k}}\text{ for all }0\leq r\leq R_{k\text{ }}\text{,}
\label{500}%
\end{equation}
and%
\begin{equation}
\left\vert \bar{v}_{k}\left(  R_{k}\right)  -V_{k}\left(  R_{k}\right)
\right\vert =\frac{\eta}{\gamma_{k}}\text{ if }R_{k\text{ }}<r_{k,\delta
}\text{.} \label{47}%
\end{equation}
Combining Lemmas \ref{85} and \ref{90}, we get%
\begin{equation}
\left\vert \cdot-x_{k}\right\vert \left\vert \nabla v_{k}\right\vert \leq
\frac{C}{\gamma_{k}\left(  1-\delta+o_{k}\left(  1\right)  \right)  }\text{ in
}B_{R_{k}}\left(  x_{k}\right)  \text{,} \label{16}%
\end{equation}
which together with (\ref{500}) imply that%
\begin{equation}
\left\vert \omega_{k}\right\vert \leq\frac{\eta+C\pi}{\gamma_{k}}\text{ in
}B_{R_{k}}\left(  x_{k}\right)  \text{.} \label{504}%
\end{equation}
In $B_{R_{k}}\left(  x_{k}\right)  $, using Lemma \ref{15}, (\ref{504})  and (\ref{227}), we
can write%
\begin{align}
\left\vert -\Delta\omega_{k}\right\vert  &  =\left\vert \frac{4\pi}{E_{k}%
}\left(  v_{k}e^{v_{k}^{2}}-\frac{p}{2\left(  4\pi\right)  ^{\frac{p}{2}}%
}\lambda_{k}v_{k}^{p-1}\right)  -\frac{4\pi}{E_{k}}\left(  V_{k}e^{V_{k}^{2}%
}-\frac{p}{2\left(  4\pi\right)  ^{\frac{p}{2}}}\lambda_{k}V_{k}^{p-1}\right)
\right\vert \nonumber\\
&  =\left\vert \frac{4\pi}{E_{k}}e^{V_{k}^{2}}\left(  \left(  V_{k}+\omega
_{k}\right)  e^{\omega_{k}^{2}+2V_{k}\omega_{k}}-V_{k}\right)  -\frac{4\pi
}{E_{k}}\frac{p}{2\left(  4\pi\right)  ^{\frac{p}{2}}}\lambda_{k}\left(
v_{k}^{p-1}-V_{k}^{p-1}\right)  \right\vert \nonumber\\
&  \leq\frac{C}{E_{k}}e^{V_{k}^{2}}\left(  1+2V_{k}^{2}\right)  \left(
1+o_{k}\left(  1\right)  \right)  \left\vert \omega_{k}\right\vert +\frac
{C}{E_{k}}\lambda_{k}\left(  1+o_{k}\left(  1\right)  \right)  V_{k}%
^{p-2}\left\vert \omega_{k}\right\vert \label{21}\\
&  \leq\frac{C}{E_{k}}e^{V_{k}^{2}}\left(  1+2V_{k}^{2}\right)  \left(
1+o_{k}\left(  1\right)  \right)  \left\vert \omega_{k}\right\vert
\text{.}\nonumber
\end{align}

Now, we demonstrate that the functions $\left(  \omega_{k}\right)  $ can be
approximated by some harmonic functions $\left(  \phi_{k}\right)  $ that
coincides with $\left(  \omega_{k}\right)  $ on the boundary $\partial
B_{R_{k}}\left(  x_{k}\right)  $, that is,
\begin{equation}
\left\vert \left\vert \nabla\left(  \omega_{k}-\phi_{k}\right)  \right\vert
\right\vert _{L^{\infty}\left(  B_{R_{k}}\left(  x_{k}\right)  \right)
}=o\left(  \gamma_{k}^{-1}R_{k}^{-1}\right)  \text{,} \label{44}%
\end{equation}%
\begin{equation}
\left\vert \left\vert \omega_{k}-\phi_{k}\right\vert \right\vert _{L^{\infty
}\left(  B_{R_{k}}\left(  x_{k}\right)  \right)  }=o\left(  \gamma_{k}%
^{-1}\right)  \text{,} \label{45}%
\end{equation}
and
\begin{equation}
\left\vert \left\vert \nabla\phi_{k}\right\vert \right\vert _{L^{\infty
}\left(  B_{R_{k}}\left(  x_{k}\right)  \right)  }=O\left(  \gamma_{k}%
^{-1}R_{k\text{ }}^{-1}\right)  \text{.} \label{30}%
\end{equation}
Since the proofs for (\ref{44}), (\ref{45}) and (\ref{30}) are lengthy and
highly technical, we will defer their presentation until the end of the proof.

From (\ref{44}) and (\ref{45}), and the fact $\omega_{k}\left(  x_{k}\right)
=0$ and $\nabla\omega_{k}\left(  x_{k}\right)  =0$, we get%
\begin{equation}
\phi_{k}\left(  x_{k}\right)  =o\left(  \gamma_{k}^{-1}\right)  \text{,
\ }\nabla\phi_{k}\left(  x_{k}\right)  =o\left(  \gamma_{k}^{-1}R_{k}%
^{-1}\right)  \text{.} \label{46}%
\end{equation}
Since $\phi_{k}$ is harmonic, (\ref{46}) gives
\[
\gamma_{k}\phi_{k}\left(  x_{k}\right)  =\frac{\gamma_{k}}{2\pi R_{k}}%
\int_{\partial B_{R_{k}}\left(  x_{k}\right)  }\phi_{k}d\sigma\rightarrow
0\text{ as }k\rightarrow\infty\text{.}%
\]
Due to $v_{k}-V_{k}=\omega_{k}=\phi_{k}$ on $\partial B_{R_{k}}\left(
x_{k}\right)  $, this leads to $\gamma_{k}\left\vert \bar{v}_{k}\left(
R_{k}\right)  -V_{k}\left(  R_{k}\right)  \right\vert \rightarrow0$ as
$k\rightarrow\infty$, which is impossible if $R_{k}<r_{k,\delta\text{ }}$by
(\ref{47}). Thus, we have proved $R_{k}=r_{k,\delta}$ and%
\[
\left\vert \left\vert \bar{v}_{k}-V_{k}\right\vert \right\vert _{L^{\infty
}\left(  B_{r_{k,\delta}}\left(  x_{k}\right)  \right)  }=o\left(  \gamma
_{k}^{-1}\right)  .
\]
It follows from (\ref{30}) and (\ref{44}) that%
\begin{equation}
\left\vert \left\vert \nabla\left(  v_{k}-V_{k}\right)  \right\vert
\right\vert _{L^{\infty}\left(  B_{r_{k,\delta}}\left(  x_{k}\right)  \right)
}=O\left(  \gamma_{k}^{-1}r_{k,\delta}^{-1}\right)  \text{.} \label{48}%
\end{equation}
Since $V_{k}\left(  x_{k}\right)  =v_{k}\left(  x_{k}\right)  =\gamma_{k}$,
(\ref{48}) obviously implies
\[
\left\vert v_{k}-V_{k}\right\vert \leq C\frac{\left\vert \cdot-x_{k}%
\right\vert }{\gamma_{k}r_{k,\delta\text{ }}}\text{ in }B_{r_{k,\delta}%
}\left(  x_{k}\right)  \text{. }%
\]

In the following, we will finished the proof of this lemma by proving
(\ref{44}), (\ref{45}) and (\ref{30}). Let $\phi_{k}$ be the harmonic function
satisfying%
\begin{equation}
-\Delta\phi_{k}=0\text{ in }B_{R_{k}}\left(  x_{k}\right)  ,\ \phi_{k}%
=\omega_{k}\text{ on }\partial B_{R_{k}}\left(  x_{k}\right)  \text{.}%
\label{502}%
\end{equation}
From (\ref{yy}) and (\ref{227}), we can rewrite equation (\ref{3.10}) as%
\begin{align*}
e^{t}\left(  \left(  1-e^{-t}\right)  V_{k}^{\prime}\right)  ^{\prime} &
=-\frac{1}{\gamma_{k}^{2}}\left(  V_{k}e^{V_{k}^{2}-\gamma_{k}^{2}+2t}%
-\frac{p}{2\left(  4\pi\right)  ^{\frac{p}{2}}}\lambda_{k}\left\vert
V_{k}\right\vert ^{p-1}e^{-\gamma_{k}^{2}+2t}\right)  \\
&  =-\frac{V_{k}}{\gamma_{k}^{2}}\left(  1+o_{k}\left(  1\right)  \right)
e^{V_{k}^{2}-\gamma_{k}^{2}+2t}%
\end{align*}
in $B_{R_{k}}\left(  x_{k}\right)  $, by introducing the change of variable
\[
t=\ln\left(  1+\frac{r^{2}}{r_{k}^{2}}\right)  \text{.}%
\]
Then by employing a similar approach as discussed in \cite[claim 5.3]{Dru2},
we can arrive at the following conclusion%
\begin{equation}
\left\vert \nabla V_{k}\left(  x\right)  -\gamma_{k}^{-1}\frac{2\left(
x-x_{k}\right)  }{\left\vert x-x_{k}\right\vert ^{2}+r_{k}^{2}}\right\vert
\leq C\gamma_{k}^{-2}\frac{\left\vert x-x_{k}\right\vert }{\left\vert
x-x_{k}\right\vert ^{2}+r_{k}^{2}}\text{\ \ for }t_{k}\left(  x\right)
\leq\delta\gamma_{k}^{2}\text{.}\label{17}%
\end{equation}
By (\ref{16}), (\ref{17}) and (\ref{43}), we can get $\left\vert \nabla
\omega_{k}\right\vert \leq C\gamma_{k}^{-1}R_{k\text{ }}^{-1}$ on $\partial
B_{R_{k}}\left(  x_{k}\right)  $ for some $C>0$. Through (\ref{504}) and using
the standard elliptic estimates for (\ref{502}), we can obtain (\ref{30}).
Moreover, since $\phi_{k}$ is harmonic function in $B_{R_{k}}\left(
x_{k}\right)  $, then
\[
\left\vert \left\vert \phi_{k}\right\vert \right\vert _{L^{\infty}\left(
B_{R_{k}}\left(  x_{k}\right)  \right)  }\leq\left\vert \left\vert \phi
_{k}\right\vert \right\vert _{L^{\infty}\left(  \partial B_{R_{k}}\left(
x_{k}\right)  \right)  }\leq\frac{C}{\gamma_{k}},
\]
namely, $\left\vert \left\vert \gamma_{k}\phi_{k}\right\vert \right\vert
_{L^{\infty}\left(  B_{R_{k}}\left(  x_{k}\right)  \right)  }\leq C$. Hence
using the standard elliptic theory for (\ref{502}) again, there exists
$\phi_{0}$ such that
\begin{equation}
\gamma_{k}\phi_{k}\left(  x_{k}+R_{k\text{ }}x\right)  \rightarrow\phi
_{0}\left(  x\right)  \text{ in }C_{loc}^{2}\left(  B_{1}\left(  0\right)
\right)  \text{ as }k\rightarrow\infty\text{.}\label{503}%
\end{equation}

In the following, we will give the proof for (\ref{44}) and (\ref{45}). First,
we prove the following assertion: for all $y\in B_{R_{k}}\left(  x_{k}\right)
$,
\begin{equation}
\left\vert \nabla\left(  \omega_{k}-\phi_{k}\right)  \left(  y\right)
\right\vert \leq C\left\vert \left\vert \nabla\omega_{k}\right\vert
\right\vert _{L^{\infty}\left(  B_{R_{k}}\left(  x_{k}\right)  \right)
}\left(  \frac{r_{k}}{r_{k}+\left\vert y-x_{k}\right\vert }+o\left(  \frac
{1}{\gamma_{k}}\right)  \right)  \text{.} \label{29}%
\end{equation}
Let $y_{k}\in B_{R_{k}}\left(  x_{k}\right)  $, using the Green representation
formula and (\ref{21}), for $k$ large enough,\ we get
\begin{align}
\left\vert \nabla\left(  \omega_{k}-\phi_{k}\right)  \left(  y_{k}\right)
\right\vert  &  \leq\frac{C}{E_{k}}\int_{B_{R_{k}}\left(  x_{k}\right)  }%
\frac{1}{\left\vert x-y_{k}\right\vert }e^{V_{k}^{2}}\left(  1+2V_{k}%
^{2}\right)  \left\vert \omega_{k}\right\vert dx\nonumber\\
&  =\frac{C}{E_{k}}\left(  \int_{\Omega_{0,k}}+\int_{\Omega_{1,k}}\right)
\frac{1}{\left\vert x-y_{k}\right\vert }e^{V_{k}^{2}}\left(  1+2V_{k}%
^{2}\right)  \left\vert \omega_{k}\right\vert dx\nonumber\\
&  :=I_{0,k}+I_{1,k}, \label{23}%
\end{align}
where
\[
\Omega_{0,k}=B_{R_{k}}\left(  x_{k}\right)  \cap\left\{  t_{k}\left(
x\right)  \leq t_{1,k}=\frac{1}{4}\gamma_{k}^{2}\right\}  \text{,}%
\]
and%
\[
\Omega_{1,k}=B_{R_{k}}\left(  x_{k}\right)  \cap\left\{  t_{1,k}\leq
t_{k}\left(  x\right)  \leq t_{2,k}=\gamma_{k}^{2}-\gamma_{k}\right\}
\text{.}%
\]
It is worth emphasizing that the decomposition of $B_{R_{k}}\left(
x_{k}\right)  $ mentioned above is reasonable, since from (\ref{55}), it is
clear that $B_{R_{k}}\left(  x_{k}\right)  =\Omega_{0,k}\cup\Omega_{1,k}$ for
$k$ large enough.

We now estimate $I_{i,k}$, for $i=0,1$. Using Lemma \ref{15} and the fact
$\omega_{k}\left(  x_{k}\right)  =0$, we obtain
\begin{align*}
I_{i,k}  &  \leq\frac{C}{E_{k}}\int_{\Omega_{i,k}}\frac{1}{\left\vert
x-y_{k}\right\vert }\gamma_{k}^{2}e^{\gamma_{k}^{2}}e^{\frac{t_{k}^{2}}%
{\gamma_{k}^{2}}-2t_{k}}\left\vert \omega_{k}\left(  x\right)  -\omega
_{k}\left(  x_{k}\right)  \right\vert dx\\
&  \leq Cr_{k}^{-2}\left\vert \left\vert \nabla\omega_{k}\right\vert
\right\vert _{L^{\infty}\left(  \Omega_{i,k}\right)  }\int_{\Omega_{i,k}}%
\frac{\left\vert x-x_{k}\right\vert }{\left\vert x-y_{k}\right\vert }%
e^{\frac{t_{k}^{2}}{\gamma_{k}^{2}}-2t_{k}}dx\text{, for }i=0,1.
\end{align*}
To estimate the integrals above, we distinguish two cases: $\left\vert
y_{k}-x_{k}\right\vert =O\left(  r_{k}\right)  $ and $\frac{\left\vert
y_{k}-x_{k}\right\vert }{r_{k}}\rightarrow\infty$, as $k\rightarrow\infty.$

Case 1: $\left\vert y_{k}-x_{k}\right\vert =O\left(  r_{k}\right)  $, as
$k\rightarrow\infty$. Since $t_{k}^{2}\left(  x\right)  /\gamma_{k}^{2}%
-2t_{k}\left(  x\right)  \leq-\frac{7}{4}t_{k}\left(  x\right)  $ in
$\Omega_{0,k}$, we have
\begin{align*}
I_{0,k}  &  \leq Cr_{k}^{-2}\left\vert \left\vert \nabla\omega_{k}\right\vert
\right\vert _{L^{\infty}\left(  \Omega_{0,k}\right)  }\int_{\Omega_{0,k}}%
\frac{\left\vert x-x_{k}\right\vert }{\left\vert x-y_{k}\right\vert }\left(
1+\frac{\left\vert x-x_{k}\right\vert ^{2}}{r_{k}^{2}}\right)  ^{-\frac{7}{4}%
}dx\\
&  \leq C\left\vert \left\vert \nabla\omega_{k}\right\vert \right\vert
_{L^{\infty}\left(  \Omega_{0,k}\right)  }\int_{\mathbb{R}^{2}}\frac
{\left\vert y\right\vert }{\left\vert y-\frac{y_{k}-x_{k}}{r_{k}}\right\vert
}\left(  1+\left\vert y\right\vert ^{2}\right)  ^{-\frac{7}{4}}dy\text{.}%
\end{align*}
For the case $\left\vert y_{k}-x_{k}\right\vert =O\left(  r_{k}\right)  $, we
obtain%
\begin{equation}
I_{0,k}=O\left(  \left\vert \left\vert \nabla\omega_{k}\right\vert \right\vert
_{L^{\infty}\left(  \Omega_{0,k}\right)  }\right)  \text{.} \label{24}%
\end{equation}
In $\Omega_{1,k}$, it is clear that $\left\vert x-x_{k}\right\vert \leq\left(
1+o_{k}\left(  1\right)  \right)  \left\vert x-y_{k}\right\vert $ by the fact
$\left\vert y_{k}-x_{k}\right\vert =O\left(  r_{k}\right)  $, as above, we
have%
\begin{align}
I_{1,k}  &  \leq Cr_{k}^{-2}\left\vert \left\vert \nabla\omega_{k}\right\vert
\right\vert _{L^{\infty}\left(  \Omega_{1,k}\right)  }\int_{\Omega_{1,k}%
}e^{\frac{t_{k}^{2}}{\gamma_{k}^{2}}-2t_{k}}dx\label{06}\\
&  \leq C\left\vert \left\vert \nabla\omega_{k}\right\vert \right\vert
_{L^{\infty}\left(  \Omega_{1,k}\right)  }\int_{t_{1,k}}^{t_{2,k}}%
e^{\frac{t^{2}}{\gamma_{k}^{2}}-t}dt\text{.}\nonumber
\end{align}
Since $\frac{1}{4}\gamma_{k}^{2}=t_{1,k}\leq t\leq t_{2,k}=\gamma_{k}%
^{2}-\gamma_{k}$, then
\begin{equation}
\frac{t^{2}}{\gamma_{k}^{2}}-t\leq-t/\gamma_{k}\leq-\frac{1}{4}\gamma
_{k}\text{,} \label{add25}%
\end{equation}
and we have%
\begin{equation}
I_{1,k}\leq C\left\vert \left\vert \nabla\omega_{k}\right\vert \right\vert
_{L^{\infty}\left(  \Omega_{1,k}\right)  }\gamma_{k}^{2}e^{-\frac{1}{4}%
\gamma_{k}}\text{.} \label{25}%
\end{equation}
Coming back to (\ref{23}) with (\ref{24}) and (\ref{25}), we obtain%
\begin{align*}
\left\vert \nabla\left(  \omega_{k}-\phi_{k}\right)  \left(  y_{k}\right)
\right\vert  &  \leq C  \left\vert \left\vert \nabla\omega
_{k}\right\vert \right\vert _{L^{\infty}\left(  \Omega_{0,k}\right)  }
+C\left\vert \left\vert \nabla\omega_{k}\right\vert \right\vert _{L^{\infty
}\left(  \Omega_{0,k}\right)  }\gamma_{k}^{2}e^{-\frac{1}{4}\gamma_{k}}\\
&  \leq C\left\vert \left\vert \nabla\omega_{k}\right\vert \right\vert
_{L^{\infty}\left(  B_{R_{k}}\left(  x_{k}\right)  \right)  }\text{,}%
\end{align*}
which implies assertion (\ref{29}) holds true in the case $\left\vert
y_{k}-x_{k}\right\vert =O\left(  r_{k}\right)  $.

Case 2: $\left\vert y_{k}-x_{k}\right\vert /r_{k}\rightarrow\infty$ as
$k\rightarrow\infty$. We follow the lines in the case 1 and emphasize the
differences. Direct calculation gives that
\begin{align*}
I_{0,k}  &  \leq Cr_{k}^{-2}\left\vert \left\vert \nabla\omega_{k}\right\vert
\right\vert _{L^{\infty}\left(  \Omega_{0,k}\right)  }\int_{\Omega_{0,k}}%
\frac{\left\vert x-x_{k}\right\vert }{\left\vert x-y_{k}\right\vert }%
e^{\frac{t_{k}^{2}}{\gamma_{k}^{2}}-2t_{k}}dx\\
&  \leq C\left\vert \left\vert \nabla\omega_{k}\right\vert \right\vert
_{L^{\infty}\left(  \Omega_{0,k}\right)  }\int_{\mathbb{R}^{2}}\frac
{\left\vert y\right\vert }{\left\vert y-\frac{y_{k}-x_{k}}{r_{k}}\right\vert
}\left(  1+\left\vert y\right\vert ^{2}\right)  ^{-\frac{7}{4}}dy\text{.}%
\end{align*}
Then we can estimate%
\begin{align*}
&  \int_{\mathbb{R}^{2}}\frac{\left\vert y\right\vert }{\left\vert
y-\frac{y_{k}-x_{k}}{r_{k}}\right\vert }\left(  1+\left\vert y\right\vert
^{2}\right)  ^{-\frac{7}{4}}dy\\
&  =\left(  \frac{\left\vert y_{k}-x_{k}\right\vert }{r_{k}}\right)
^{-\frac{3}{2}}\int_{\mathbb{R}^{2}}\frac{\left\vert x\right\vert }{\left\vert
x-\frac{y_{k}-x_{k}}{\left\vert y_{k}-x_{k}\right\vert }\right\vert }\left(
\frac{r_{k}^{2}}{\left\vert x_{k}-y_{k}\right\vert ^{2}}+\left\vert
x\right\vert ^{2}\right)  ^{-\frac{7}{4}}dx\\
&  \leq C\left(  \frac{\left\vert y_{k}-x_{k}\right\vert }{r_{k}}\right)
^{-\frac{3}{2}}+2\left(  \frac{\left\vert y_{k}-x_{k}\right\vert }{r_{k}%
}\right)  ^{-\frac{3}{2}}\int_{B_{\frac{1}{2}}\left(  0\right)  }\left\vert
x\right\vert \left(  \frac{r_{k}^{2}}{\left\vert x_{k}-y_{k}\right\vert ^{2}%
}+\left\vert x\right\vert ^{2}\right)  ^{-\frac{7}{4}}dx\\
&  \leq C\left(  \frac{\left\vert y_{k}-x_{k}\right\vert }{r_{k}}\right)
^{-\frac{3}{2}}+2\frac{r_{k}}{\left\vert y_{k}-x_{k}\right\vert }%
\int_{\mathbb{R}^{2}}\left\vert y\right\vert \left(  1+\left\vert y\right\vert
^{2}\right)  ^{-\frac{7}{4}}dy\text{,}%
\end{align*}
hence, we obtained%
\begin{equation}
I_{0,k}\leq C\left\vert \left\vert \nabla\omega_{k}\right\vert \right\vert
_{L^{\infty}\left(  \Omega_{0,k}\right)  }\frac{r_{k}}{\left\vert y_{k}%
-x_{k}\right\vert }\text{.} \label{27}%
\end{equation}
Direct calculation gives
\[
I_{1,k}\leq Cr_{k}^{-2}\left\vert \left\vert \nabla\omega_{k}\right\vert
\right\vert _{L^{\infty}\left(  \Omega_{1,k}\right)  }\int_{\Omega_{1,k}}%
\frac{\left\vert x-x_{k}\right\vert }{\left\vert x-y_{k}\right\vert }%
e^{\frac{t_{k}^{2}}{\gamma_{k}^{2}}-2t_{k}}dx\text{.}%
\]
We rewrite the integral above as
\[
r_{k}^{-2}\left(  \int_{\Omega_{1,k}\backslash B_{\frac{1}{2}\left\vert
y_{k}-x_{k}\right\vert }\left(  y_{k}\right)  }+\int_{\Omega_{1,k}\cap
B_{\frac{1}{2}\left\vert y_{k}-x_{k}\right\vert }\left(  y_{k}\right)
}\right)  \frac{\left\vert x-x_{k}\right\vert }{\left\vert x-y_{k}\right\vert
}e^{\frac{t_{k}^{2}}{\gamma_{k}^{2}}-2t_{k}}dx:=I_{k}+II_{k}.
\]
Using the triangle inequality and the similar calculations as for
(\ref{06})~in Case 1, we derive that
\[
I_{k}\leq3r_{k}^{-2}\int_{\Omega_{1,k}}e^{\frac{t_{k}^{2}}{\gamma_{k}^{2}%
}-2t_{k}}dx\leq C\gamma_{k}^{2}e^{-\frac{1}{4}\gamma_{k}}\text{.}%
\]
We obverse that if
\begin{equation*}
t_{k}\left(  \frac{3}{2}\left\vert y_{k}-x_{k}\right\vert \right)
=\log\left(  1+\frac{9\left\vert y_{k}-x_{k}\right\vert ^{2}}{4r_{k}^{2}%
}\right)  \leq t_{1,k}=\frac{1}{4}\gamma_{k}^{2}\text{,} %
\end{equation*}
then $\Omega_{1,k}\cap B_{\frac{1}{2}\left\vert y_{k}-x_{k}\right\vert
}\left(  y_{k}\right)  =\emptyset$. Hence, if (\ref{06}) holds, then the
estimates  of $I_{1,k}$ is finished. Otherwise, if $\log\left(  1+\frac
{9\left\vert y_{k}-x_{k}\right\vert ^{2}}{4r_{k}^{2}}\right)  >\frac{1}%
{4}\gamma_{k}^{2}$, it follows that%
\[
s_{k}=t_{k}\left(  \frac{\left\vert y_{k}-x_{k}\right\vert }{2}\right)
=\log\left(  1+\frac{\left\vert y_{k}-x_{k}\right\vert ^{2}}{4r_{k}^{2}%
}\right)  >\frac{1}{4}\gamma_{k}^{2}-\log9\text{.}%
\]
When $\Omega_{1,k}\cap B_{\frac{1}{2}\left\vert y_{k}-x_{k}\right\vert
}\left(  y_{k}\right)  \neq\emptyset$, observe that $\frac{y_{k}-x_{k}}{2}$ is
the closest point to the point $x_{k}$~in $B_{\frac{1}{2}\left\vert
y_{k}-x_{k}\right\vert }\left(  y_{k}\right)$, therefore, we
have $s_{k}\leq t_{2,k}=\gamma_{k}^{2}-\gamma_{k}$. Then similar as
(\ref{add25}), we obtain $\frac{s_{k}^{2}}{\gamma_{k}^{2}}-s_{k}\leq-\frac
{1}{4}\gamma_{k}$.

Since $\frac{t^{2}}{\gamma_{k}^{2}}-2t$ is decreasing in $\left[  \frac{1}%
{4}\gamma_{k}^{2},\gamma_{k}^{2}-\gamma_{k}\right]  $, we have
\begin{align*}
&  r_{k}^{-2}\int_{\Omega_{1,k}\cap B_{\frac{1}{2}\left\vert y_{k}%
-x_{k}\right\vert }\left(  y_{k}\right)  }\frac{\left\vert x-x_{k}\right\vert
}{\left\vert x-y_{k}\right\vert }e^{\frac{t_{k}^{2}}{\gamma_{k}^{2}}-2t_{k}%
}dx\label{xx}\\
&  \leq\frac{3}{2}r_{k}^{-2}\left\vert y_{k}-x_{k}\right\vert e^{\frac
{s_{k}^{2}}{\gamma_{k}^{2}}-2s_{k}}\int_{B_{\frac{1}{2}\left\vert y_{k}%
-x_{k}\right\vert }\left(  y_{k}\right)  }\frac{1}{\left\vert x-y_{k}%
\right\vert }dx\nonumber\\
&  \leq C\frac{\left\vert y_{k}-x_{k}\right\vert ^{2}}{r_{k}^{2}}%
e^{\frac{s_{k}^{2}}{\gamma_{k}^{2}}-2s_{k}}\nonumber\\
&  \leq C\frac{\left\vert y_{k}-x_{k}\right\vert ^{2}}{r_{k}^{2}}e^{-\frac
{1}{4}\gamma_{k}}e^{-s_{k}}\nonumber\\
&  \leq C\frac{\left\vert y_{k}-x_{k}\right\vert ^{2}}{r_{k}^{2}}e^{-\frac
{1}{4}\gamma_{k}}\left(  1+\frac{\left\vert y_{k}-x_{k}\right\vert ^{2}%
}{4r_{k}^{2}}\right)  ^{-1}\leq Ce^{-\frac{1}{4}\gamma_{k}}\text{.}\nonumber
\end{align*}
Thus, we have%
\begin{equation}
I_{1,k}\leq C\gamma_{k}^{2}e^{-\frac{1}{4}\gamma_{k}}\left\vert \left\vert
\nabla\omega_{k}\right\vert \right\vert _{L^{\infty}\left(  \Omega
_{1,k}\right)  }\text{.} \label{28}%
\end{equation}
Hence, it follows from (\ref{27}) and (\ref{28}) that%
\begin{align*}
\left\vert \nabla\left(  \omega_{k}-\phi_{k}\right)  \left(  y_{k}\right)
\right\vert  &  \leq C\left\vert \left\vert \nabla\omega_{k}\right\vert
\right\vert _{L^{\infty}\left(  \Omega_{0,k}\right)  }\frac{r_{k}}{\left\vert
y_{k}-x_{k}\right\vert }+C\gamma_{k}^{2}e^{-\frac{1}{4}\gamma_{k}}\left\vert
\left\vert \nabla\omega_{k}\right\vert \right\vert _{L^{\infty}\left(
\Omega_{1,k}\right)  }\\
&  \leq C\left\vert \left\vert \nabla\omega_{k}\right\vert \right\vert
_{L^{\infty}\left(  B_{R_{k}}\left(  x_{k}\right)  \right)  }\left(
\frac{r_{k}}{\left\vert y_{k}-x_{k}\right\vert }+\gamma_{k}^{2}e^{-\frac{1}%
{4}\gamma_{k}}\right)  \text{.}%
\end{align*}
We can prove the assertion (\ref{29}) for the case $\left\vert y_{k}%
-x_{k}\right\vert /r_{k}\rightarrow\infty$ as $k\rightarrow\infty$.

Next, we proceed to present the proofs for (\ref{44}) and (\ref{45}), which
offer a more refined estimates  of the assertion (\ref{29}). We will prove
(\ref{44}) by contradiction. Assume $y_{k}\in B_{R_{k}}\left(  x_{k}\right)  $
is defined by
\begin{equation}
\left\vert \nabla\left(  \omega_{k}-\phi_{k}\right)  \left(  y_{k}\right)
\right\vert =\left\vert \left\vert \nabla\left(  \omega_{k}-\phi_{k}\right)
\right\vert \right\vert _{L^{\infty}\left(  B_{R_{k}}\left(  x_{k}\right)
\right)  }\text{,}\label{31}%
\end{equation}
and satisfies%
\begin{equation*}
\alpha_{k}:=\left\vert \nabla\left(  \omega_{k}-\phi_{k}\right)  \left(
y_{k}\right)  \right\vert \geq\frac{L}{\gamma_{k}R_{k}}%
\end{equation*}
for some $L>0$. From (\ref{30}), we have
\begin{equation}
\left\vert \left\vert \nabla\omega_{k}\right\vert \right\vert _{L^{\infty
}\left(  B_{R_{k}}\left(  x_{k}\right)  \right)  }\leq\alpha_{k}+CR_{k}%
^{-1}\gamma_{k}^{-1}\leq\alpha_{k}\left(  1+\frac{C}{L}\right)  \text{.}%
\label{33}%
\end{equation}
Applying (\ref{33}) to (\ref{29}), we obtain that there exists $C_{L}%
\ $depending on $L$ such that
\[
\alpha_{k}\leq C_{L}\alpha_{k}\left(  \frac{r_{k}}{r_{k}+\left\vert
y_{k}-x_{k}\right\vert }+o\left(  \frac{1}{\gamma_{k}}\right)  \right)
\text{,}%
\]
which indicates that $\left(  \frac{\left\vert y_{k}-x_{k}\right\vert }{r_{k}%
}\right)  $ is bounded, and then
\begin{equation}
\frac{y_{k}-x_{k}}{r_{k}}\rightarrow y_{0}\in\mathbb{R}^{2}\text{ as
}k\rightarrow\infty\text{.}\label{37}%
\end{equation}
Setting
\begin{equation}
\tilde{\omega}_{k}\left(  x\right)  :=\frac{1}{r_{k}\alpha_{k}}\omega
_{k}\left(  x_{k}+r_{k}x\right)  \text{, \ \ \ }\tilde{\phi}_{k}\left(
x\right)  :=\frac{1}{r_{k}\alpha_{k}}\phi_{k}\left(  x_{k}+r_{k}x\right)
\text{.}\label{39}%
\end{equation}
According to (\ref{29}) and (\ref{33}), we get%
\begin{equation}
\left\vert \nabla\left(  \tilde{\omega}_{k}-\tilde{\phi}_{k}\right)  \left(
x\right)  \right\vert \leq\frac{C_{L}}{1+\left\vert x\right\vert }%
+o_{k}\left(  1\right)  \text{ for all }x\in\mathbb{R}^{2}\text{.}\label{34}%
\end{equation}
It is trivial to see that%
\begin{equation}
\tilde{\omega}_{k}\left(  0\right)  =0\text{, \ \ }\nabla\tilde{\omega}%
_{k}\left(  0\right)  =0\text{, \ \ }\left\vert \nabla\left(  \tilde{\omega
}_{k}-\tilde{\phi}_{k}\right)  \left(  \frac{y_{k}-x_{k}}{r_{k}}\right)
\right\vert =1\text{.}\label{35}%
\end{equation}
Employing (\ref{503}) and (\ref{43}), passing to a subsequence, we have
\begin{equation}
\nabla\tilde{\phi}_{k}\rightarrow\vec{A}\text{ in }C_{1oc}^{1}\left(
\mathbb{R}^{2}\right)  \text{ as }k\rightarrow\infty\text{,}\label{36}%
\end{equation}
where $\vec{A}$ $:=\left(  \underset{k\rightarrow\infty}{\lim}\frac{1}%
{\gamma_{k}R_{k}\alpha_{k}}\right)  \nabla\phi_{0}\left(  0\right)  $. Using
(\ref{21}), we obtain%
\begin{equation}
\left\vert -\Delta\tilde{\omega}_{k}\right\vert \leq\frac{C}{E_{k}}r_{k}%
^{2}\left(  1+2V_{k}^{2}\left(  x_{k}+r_{k}x\right)  \right)  e^{V_{k}%
^{2}\left(  x_{k}+r_{k}x\right)  }\left\vert \tilde{\omega}_{k}\right\vert
\text{.}\label{add681}%
\end{equation}
By (\ref{34}), (\ref{35}) and (\ref{36}), we can estimate%
\begin{align}
\left\vert \tilde{\omega}_{k}\left(  x\right)  \right\vert  &  =\left\vert
\int_{0}^{1}\left(  \tilde{\omega}_{k}\left(  rx\right)  -\tilde{\phi}%
_{k}\left(  rx\right)  \right)  ^{\prime}dr+\tilde{\phi}_{k}\left(  x\right)
-\tilde{\phi}_{k}\left(  0\right)  \right\vert \nonumber\\
&  \leq\int_{0}^{1}\left\vert \nabla\left(  \tilde{\omega}_{k}-\tilde{\phi
}_{k}\right)  \left(  rx\right)  \cdot x\right\vert dr+\left\vert \tilde{\phi
}_{k}\left(  x\right)  -\tilde{\phi}_{k}\left(  0\right)  \right\vert
\nonumber\\
&  \leq\int_{0}^{1}\frac{C_{L}\left\vert x\right\vert }{1+\left\vert
rx\right\vert }dr+\left\vert \nabla\tilde{\phi}_{k}\right\vert \left\vert
x\right\vert \nonumber\\
&  =C_{L}\log\left(  1+\left\vert x\right\vert \right)  +\left\vert \vec
{A}\right\vert \left\vert x\right\vert +o_{k}\left(  \left\vert x\right\vert
\right)  \text{,}\label{add682}%
\end{align}
which is uniformly bounded on any compact subset of $\mathbb{R}^{2}$.
Combining (\ref{add681}) and (\ref{add682}), we see that $\left(
-\Delta\tilde{\omega}_{k}\right)  $ is uniformly bounded in any compact subset
of $\mathbb{R}^{2}$ by the definition of $r_{k}$. Thus, using the standard
elliptic theory, there exists some $\tilde{\omega}_{0}\in C_{loc}^{1}\left(
\mathbb{R}^{2}\right)  $, such that
\begin{equation}
\tilde{\omega}_{k}\rightarrow\tilde{\omega}_{0}\text{ in }C_{loc}^{1}\left(
\mathbb{R}^{2}\right)  \text{ as }k\rightarrow\infty\text{.}\label{38}%
\end{equation}
Furthermore, combining (\ref{37}), (\ref{34}), (\ref{35}),(\ref{36}) with (\ref{38}), we
have%
\begin{equation}
\tilde{\omega}_{0}\left(  0\right)  =0\text{, }\nabla\tilde{\omega}_{0}\left(
0\right)  =0\text{, }\left\vert \nabla\tilde{\omega}_{0}\left(  y_{0}\right)
-\vec{A}\right\vert =1\text{, }\left\vert \nabla\tilde{\omega}_{0}\left(
x\right)  -\vec{A}\right\vert \leq\frac{C_{L}}{1+\left\vert x\right\vert
}\text{ in }\mathbb{R}^{2}\text{ .}\label{41}%
\end{equation}
Thus $\tilde{\omega}_{0}\not \equiv 0$. This together with the fact that
$\gamma_{k}\omega_{k}\left(  x_{k}+r_{k}x\right)  \rightarrow0$ in
$C_{loc}^{1}\left(  \mathbb{R}^{2}\right)  $ as $k\rightarrow\infty$ (from
(\ref{214})) imply that%
\[
\gamma_{k}\alpha_{k}r_{k}\rightarrow0\text{ as }k\rightarrow\infty\text{.}%
\]
From (\ref{21}), (\ref{87}) and (\ref{39}), we can write%
\begin{align}
-\Delta\tilde{\omega}_{k}\left(  x\right)   &  =\frac{r_{k}^{2}}{\alpha
_{k}r_{k}}\frac{4\pi}{E_{k}}\left[  V_{k}e^{V_{k}^{2}}\left(  e^{2V_{k}%
\omega_{k}+\omega_{k}^{2}}-1\right)  +\omega_{k}e^{\left(  V_{k}+\omega
_{k}\right)  ^{2}}\right.  \nonumber\\
&  \text{ \ \ }\left.  -\frac{p}{2\left(  4\pi\right)  ^{\frac{p}{2}}}\left(
1+o_{k}\left(  1\right)  \right)  \lambda_{k}V_{k}^{p-2}\omega_{k}\right]
\nonumber\\
&  =\frac{4}{\alpha_{k}r_{k}\gamma_{k}}\frac{V_{k}}{\gamma_{k}}e^{V_{k}%
^{2}-\gamma_{k}^{2}}\left(  e^{2V_{k}\omega_{k}+\omega_{k}^{2}}-1\right)
+4\gamma_{k}^{-2}\tilde{\omega}_{k}e^{V_{k}^{2}-\gamma_{k}^{2}+2V_{k}%
\omega_{k}+\omega_{k}^{2}}\label{40}\\
&  -\frac{p}{2\left(  4\pi\right)  ^{\frac{p}{2}}}\left(  1+o_{k}\left(
1\right)  \right)  4\gamma_{k}^{-2}\tilde{\omega}_{k}\lambda_{k}V_{k}%
^{p-2}e^{-\gamma_{k}^{2}}\text{.}\nonumber
\end{align}
It is clear that%
\begin{equation}
\gamma_{k}^{-1}V_{k}\left(  x_{k}+r_{k}x\right)  \rightarrow1\text{ in
}C_{loc}^{0}\left(  \mathbb{R}^{2}\right)  \text{ as }k\rightarrow
\infty\text{,}\label{215}%
\end{equation}%
\begin{equation}
V_{k}^{2}\left(  x_{k}+r_{k}x\right)  -\gamma_{k}^{2}\rightarrow2\phi_{\infty
}\left(  x\right)  \text{ in }C_{loc}^{0}\left(  \mathbb{R}^{2}\right)  \text{
as }k\rightarrow\infty\text{,}\label{216}%
\end{equation}
and
\begin{equation}
2V_{k}\left(  x_{k}+r_{k}x\right)  \omega_{k}\left(  x_{k}+r_{k}x\right)
+\omega_{k}\left(  x_{k}+r_{k}x\right)  ^{2}=2\gamma_{k}\alpha_{k}r_{k}%
\tilde{\omega}_{0}\left(  1+o_{k}\left(  1\right)  \right)  =o_{k}\left(
1\right)  \text{.}\label{217}%
\end{equation}
Plugging (\ref{215}), (\ref{216}) and (\ref{217}) back into (\ref{40}), we
have%
\[
-\Delta\tilde{\omega}_{k}=8e^{2\phi_{\infty}}\tilde{\omega}_{0}+o_{k}\left(
1\right)  \text{, in }\Omega_{k},
\]
Hence,%
\[
-\Delta\tilde{\omega}_{0}=8e^{2\phi_{\infty}}\tilde{\omega}_{0}\text{ in
}\mathbb{R}^{2}\text{.}%
\]
 From \cite[Lemma C.1.]{PLA}, we know that the only solution of the above
equation satisfying (\ref{41}) is $\tilde{\omega}_{0}\left(  x\right)
\equiv0$, which is a contradiction. Hence, we must have $\alpha_{k}=o\left(
\gamma_{k}^{-1}R_{k}^{-1}\right)  $, namely, (\ref{44}) holds true. Combining
(\ref{44}) with (\ref{30}), we have
\begin{align*}
\left\vert \left\vert \nabla\omega_{k}\right\vert \right\vert _{L^{\infty
}\left(  B_{R_{k}}\left(  x_{k}\right)  \right)  } &  \leq\left\vert
\left\vert \nabla\left(  \omega_{k}-\phi_{k}\right)  \right\vert \right\vert
_{L^{\infty}\left(  B_{R_{k}}\left(  x_{k}\right)  \right)  }+\left\vert
\left\vert \nabla\phi_{k}\right\vert \right\vert _{L^{\infty}\left(  B_{R_{k}%
}\left(  x_{k}\right)  \right)  }\nonumber\\
&  =O\left(  \gamma_{k}^{-1}R_{k}^{-1}\right)  \text{,}%
\end{align*}
which together with (\ref{29}) imply that%
\[
\left\vert \nabla\left(  \omega_{k}-\phi_{k}\right)  \left(  y\right)
\right\vert \leq C\gamma_{k}^{-1}R_{k}^{-1}\left(  \frac{r_{k}}{r_{k}%
+\left\vert y-x_{k}\right\vert }+o\left(  \gamma_{k}^{-1}\right)  \right)
\text{.}%
\]
Since $\omega_{k}-\phi_{k}=0$ on $\partial B_{x_{k}}\left(  R_{k}\right)  $,
we obtain
\[
\left\vert \left(  \omega_{k}-\phi_{k}\right)  \left(  y\right)  \right\vert
\leq C\gamma_{k}^{-1}R_{k}^{-1}r_{k}\log\frac{r_{k}+R_{k}}{r_{k}+\left\vert
y-x_{k}\right\vert }+o\left(  \gamma_{k}^{-2}\right)
\]
for all $y\in B_{R_{k}}\left(  x_{k}\right)  $. Therefore, we have
\begin{align*}
\left\vert \left\vert \omega_{k}-\phi_{k}\right\vert \right\vert _{L^{\infty
}\left(  B_{R_{k}}\left(  x_{k}\right)  \right)  } &  =O\left(  \gamma
_{k}^{-1}\frac{r_{k}}{R_{k}}\log\frac{r_{k}+R_{k}}{r_{k}+\left\vert
y-x_{k}\right\vert }\right)  +o\left(  \gamma_{k}^{-2}\right)  \\
&  =O\left(  \gamma_{k}^{-1}\frac{r_{k}}{R_{k}}\log\left(  1+\frac{R_{k}%
}{r_{k}}\right)  \right)  +o\left(  \gamma_{k}^{-2}\right)  \\
&  =o\left(  \gamma_{k}^{-1}\right)  \text{,}%
\end{align*}
where in the last equality\ we have used the fact (\ref{43}). Hence (\ref{45})
is proved. In summary, we finish the proofs of (\ref{30}), \eqref{44} and
\eqref{45}, and the proof of this lemma is finished.
\end{proof}

\subsection{ Estimates of the exponential term and proof of Theorem \ref{th1.1}}

In this subsection, we will finish estimating the Dirichlet energy of $\left(
v_{k}\right)  $. From (\ref{14}) and Subsection \ref{poly}, we only need to
compute the integral of the exponential type term $I_{E}=\frac{{4\pi}}{E_{k}%
}\int_{\Omega}v_{k}^{2}e^{v_{k}^{2}}dx$.

From Lemma \ref{90} and Lemma \ref{507}, we see that
\begin{equation}
v_{k}\geq\left(  1-\delta+o_{k}\left(  1\right)  \right)  \gamma_{k}\text{ in
}B_{r_{k,\delta}}\left(  x_{k}\right)  ,\label{508}%
\end{equation}
In view of (\ref{508}), we rewrite integral $I_{E}$ as
\[
\frac{{4\pi}}{E_{k}}\left(  \int_{B_{r_{k,\delta}}\left(  x_{k}\right)  }%
+\int_{\Omega\backslash B_{r_{k,\delta}}\left(  x_{k}\right)  }\right)
v_{k}^{2}e^{v_{k}^{2}}dx:=I_{k}+II_{k}.
\]

We first estimate the integral $II_{k}$, for this, we introduce the following
truncated function of $v_{k}$: let ${0<{\delta^{\prime}}<\delta<1}$ and
define
\[
{v}_{k}^{{{\delta^{\prime}}}}{=}\left\{
\begin{array}
[c]{cc}%
v_{k} & \text{in }\Omega\backslash B_{r_{k,\delta}}\left(  x_{k}\right)
\text{,}\\
{\min\left\{  {{v_{k},}\left(  {1-{\delta^{\prime}}}\right)  {\gamma_{k}}%
}\right\}  } & \text{in }B_{r_{k,\delta}}\left(  x_{k}\right)  \text{.}%
\end{array}
\right.
\]

\begin{lemma}
\label{le3.63}It holds that
\[
{\int_{\Omega}{\left\vert {\nabla  {v}_{k}^{{{\delta^{\prime}}}}         }\right\vert }^{2}}dx\leq
4\pi\left(  {1-{\delta^{\prime}+o}}_{k}\left(  1\right)  \right)  .
\]

\end{lemma}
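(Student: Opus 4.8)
The plan is to use the truncated function $v_k^{\delta'}$ \emph{itself} as a test function in the Euler--Lagrange equation $(\ref{eq of vn})$. First one must check that $v_k^{\delta'}$ is admissible: by the comparison principle (Lemma \ref{507}) together with the expansion of Lemma \ref{15}, on $\partial B_{r_{k,\delta}}(x_k)$ one has $v_k=V_k(r_{k,\delta})+O(\gamma_k^{-1})=(1-\delta)\gamma_k+O(\gamma_k^{-1})<(1-\delta')\gamma_k$ for $k$ large, since $\delta'<\delta$; hence the two pieces in the definition of $v_k^{\delta'}$ glue continuously, so $v_k^{\delta'}\in H_0^1(\Omega)$, $0\le v_k^{\delta'}\le v_k$, and pointwise $\nabla v_k^{\delta'}$ equals either $\nabla v_k$ or $0$, whence $\nabla v_k^{\delta'}\cdot\nabla v_k=|\nabla v_k^{\delta'}|^2$ a.e. Testing $(\ref{eq of vn})$ against $v_k^{\delta'}$ then gives
\[
\int_\Omega|\nabla v_k^{\delta'}|^2dx=\int_\Omega\nabla v_k\cdot\nabla v_k^{\delta'}\,dx=\frac{4\pi}{E_k}\int_\Omega\Big(v_ke^{v_k^2}-\frac{p}{2(4\pi)^{p/2}}\lambda_kv_k^{p-1}\Big)v_k^{\delta'}\,dx\le\frac{4\pi}{E_k}\int_\Omega v_ke^{v_k^2}v_k^{\delta'}\,dx,
\]
where the perturbation term was discarded because $v_k^{p-1}v_k^{\delta'}\ge0$.

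Next I would estimate the remaining integral by splitting $\Omega=B_{r_{k,\delta}}(x_k)\cup(\Omega\setminus B_{r_{k,\delta}}(x_k))$. On the ball, $v_k^{\delta'}\le(1-\delta')\gamma_k$, so that contribution is at most $(1-\delta')\gamma_k\cdot\frac{4\pi}{E_k}\int_\Omega v_ke^{v_k^2}dx$; applying Lemma \ref{le4.6} with $\eta\equiv1$ gives $\frac{\gamma_k}{E_k}\int_\Omega v_ke^{v_k^2}dx\to1$, so this contribution is $\le4\pi(1-\delta')(1+o_k(1))$. On the complement, $v_k^{\delta'}=v_k$, so the contribution equals $\frac{4\pi}{E_k}\int_{\Omega\setminus B_{r_{k,\delta}}(x_k)}v_k^2e^{v_k^2}dx$; since $r_{k,\delta}/r_k\to\infty$, for every fixed $R$ this is at most $\frac{4\pi}{E_k}\int_\Omega v_k^2e^{v_k^2}dx-\frac{4\pi}{E_k}\int_{B_{Rr_k}(x_k)}v_k^2e^{v_k^2}dx$, and by $(\ref{14})$, $(\ref{238})$, Lemma \ref{231} and the rescaling $x=x_k+r_ky$ (cf. $(\ref{79})$) its $\limsup_{k\to\infty}$ is at most $4\pi-4\int_{B_R(0)}e^{2\phi_\infty}dy$, which tends to $0$ as $R\to\infty$ by $(\ref{add690})$; hence it is $o_k(1)$. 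Adding the two contributions yields $\int_\Omega|\nabla v_k^{\delta'}|^2dx\le4\pi(1-\delta'+o_k(1))$.

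The one genuinely delicate point is the admissibility step, i.e. bounding $v_k$ from above on $\partial B_{r_{k,\delta}}(x_k)$ by $(1-\delta')\gamma_k$; this is exactly where the comparison principle and the sharp radial expansion are essential, since a crude gradient bound alone would only give $v_k\lesssim\gamma_k$ on that sphere. Everything else is a routine blow-up computation of the kind already carried out in Subsection \ref{poly}. I also note that the argument is uniform in $p\in[1,2]$: in $B_{r_{k,\delta}}(x_k)$ we have $v_k\ge(1-\delta+o_k(1))\gamma_k\ge\log\gamma_k$ for large $k$ by $(\ref{508})$, so the case distinction appearing in Lemmas \ref{52}, \ref{85} and \ref{507} is immaterial here, while the estimate on $\Omega\setminus B_{r_{k,\delta}}(x_k)$ uses only concentration of mass and no pointwise gradient bound.
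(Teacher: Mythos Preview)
Your proof is correct and uses essentially the same ingredients as the paper: the comparison principle (Lemma~\ref{507}) together with Lemma~\ref{15} to verify that $v_k<(1-\delta')\gamma_k$ on $\partial B_{r_{k,\delta}}(x_k)$ (so that $v_k^{\delta'}\in H_0^1(\Omega)$), followed by testing~\eqref{eq of vn} and a blow-up concentration argument. The only difference is cosmetic: the paper tests the equation with the complementary piece $\tilde v_k^{\delta'}:=(v_k-(1-\delta')\gamma_k)^+\chi_{B_{r_{k,\delta}}(x_k)}$, obtains the lower bound $\|\nabla\tilde v_k^{\delta'}\|_{L^2}^2\ge 4\pi\delta'(1+o_k(1))$, and then concludes via the $H_0^1$-orthogonality $\|\nabla v_k^{\delta'}\|_{L^2}^2+\|\nabla\tilde v_k^{\delta'}\|_{L^2}^2=4\pi$, whereas you test directly with $v_k^{\delta'}$ and bound $\|\nabla v_k^{\delta'}\|_{L^2}^2$ from above; both routes amount to the same computation of where the exponential mass concentrates.
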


\begin{proof}
According to the proof of Lemma \ref{15}, we have $-\Delta V_{k}\geq0$ in
$B_{r_{k,\delta}}\left(  x_{k}\right)  $, then $V_{k}$ is radially decreasing
in $B_{r_{k,\delta}}\left(  x_{k}\right)  $. Through (\ref{55}) and
\eqref{510}, we obtain $V_{k}=\left(  1-\delta+o_{k}\left(  1\right)  \right)
\gamma_{k}$ on $\partial B_{r_{k,\delta}}\left(  x_{k}\right)  $. Hence, from
Lemma \ref{507}, we get
\begin{equation}
{{v_{k}<}\left(  {1-{\delta^{\prime}}}\right)  {\gamma_{k}}}\text{ on
}\partial B_{r_{k,\delta}}\left(  x_{k}\right)  \text{.}\label{03}%
\end{equation}
By the definition of ${v}_{k}^{{{\delta^{\prime}}}}$, we have
\[
v_{k}={v}_{k}^{{{\delta^{\prime}}}}+\tilde{v}_{k}^{{{\delta^{\prime}}}%
}\text{,}%
\]
where $\tilde{v}_{k}^{{{\delta^{\prime}}}}:=\left(  v_{k}-\left(
1-\delta^{^{\prime}}\right)  \gamma_{k}\right)  ^{+}\chi_{B_{r_{k,\delta}%
}\left(  x_{k}\right)  }$ vanishes on $\partial B_{r_{k,\delta}}\left(
x_{k}\right)  $ by (\ref{03}).

Using the Green's formula, for any given $R>0$ one has%
\begin{align*}
\int_{B_{r_{k,\delta}}\left(  x_{k}\right)  }\left\vert \nabla\tilde{v}%
_{k}^{{{\delta^{\prime}}}}\right\vert ^{2}dx  &  =\int_{B_{r_{k,\delta}%
}\left(  x_{k}\right)  }\nabla\tilde{v}_{k}^{{{\delta^{\prime}}}}\cdot\nabla
v_{k}dx\\
&  =\int_{B_{r_{k,\delta}}\left(  x_{k}\right)  }-\Delta v_{k}\tilde{v}%
_{k}^{{{\delta^{\prime}}}}dx\\
&  =\int_{B_{r_{k,\delta}}\left(  x_{k}\right)  }\frac{4\pi}{E_{k}}\left(
v_{k}e^{v_{k}^{2}}-\frac{p}{2\left(  4\pi\right)  ^{\frac{p}{2}}}\lambda
_{k}v_{k}^{p-1}\right)  \tilde{v}%
_{k}^{{{\delta^{\prime}}}}dx\\
&  \text{(using (\ref{227}) and (\ref{508}))}\\
&  \geq\int_{B_{Rr_{k}}\left(  x_{k}\right)  }\frac{4\pi}{E_{k}}v_{k}%
e^{v_{k}^{2}}\left(  1+o_{k}\left(  1\right)  \right)  \left(  v_{k}-\left(
1-\delta^{^{\prime}}\right)  \gamma_{k}\right)  dx\\
&  \geq\delta^{^{\prime}}\left(  1+o_{k}\left(  1\right)  \right)  \int
_{B_{R}\left(  0\right)  }4e^{2\phi_{\infty}}dy\text{,}%
\end{align*}
letting $R\rightarrow\infty$, we obtain
\[
\left\Vert \tilde{v}_{k}^{{{\delta^{\prime}}}}\right\Vert _{H_{0}^{1}\left(
\Omega\right)  }^{2}\geq4\pi{\delta^{\prime}}\left(  {1+o}_{k}{\left(
1\right)  }\right)  .
\]
Using the fact that $\left\Vert {v}_{k}^{{{\delta^{\prime}}}}+\tilde{v}%
_{k}^{{{\delta^{\prime}}}}\right\Vert _{H_{0}^{1}\left(  \Omega\right)  }%
^{2}=4\pi$, and ${v}_{k}^{{{\delta^{\prime}}}}\ $ and $\tilde{v}_{k}%
^{{{\delta^{\prime}}}}$ are $H_{0}^{1}$-orthogonal, we have
\[
\left\Vert {\nabla v}_{k}^{{{\delta^{\prime}}}}\right\Vert _{{L^{2}}\left(
\Omega\right)  }^{2}\leq4\pi\left(  {1-{\delta^{\prime}+o}}_{k}\left(
1\right)  \right)  .
\]
Then we get the desired result.
\end{proof}

\begin{proposition}
\label{pro4.10} It holds
\begin{equation}
\int_{\Omega\backslash B_{r_{k,\delta}}{\left(  {{x_{k}}}\right)  }}%
{\frac{{{4\pi}}}{{{E_{k}}}}v_{k}^{2}{e^{v_{k}^{2}}}dx\leq\frac{C}{{\gamma
_{k}^{4}}},} \label{out}%
\end{equation}
as $k\rightarrow\infty$.
\end{proposition}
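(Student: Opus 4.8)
The plan is to estimate $II_k$ by trading $v_k$ for the truncated function $v_k^{\delta'}$ outside the small ball, and then to combine the strictly subcritical Dirichlet bound supplied by Lemma \ref{le3.63} with the polynomial decay of $v_k$ coming from Proposition \ref{pro4.9}.

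First I would note that, by the very definition of $v_k^{\delta'}$, one has $v_k\equiv v_k^{\delta'}$ on $\Omega\setminus B_{r_{k,\delta}}(x_k)$, hence
\[
\int_{\Omega\setminus B_{r_{k,\delta}}(x_k)}\frac{4\pi}{E_k}\,v_k^{2}e^{v_k^{2}}\,dx
=\frac{4\pi}{E_k}\int_{\Omega\setminus B_{r_{k,\delta}}(x_k)}(v_k^{\delta'})^{2}e^{(v_k^{\delta'})^{2}}\,dx
\le\frac{4\pi}{E_k}\int_{\Omega}(v_k^{\delta'})^{2}e^{(v_k^{\delta'})^{2}}\,dx .
\]
Since Lemma \ref{le4.5} gives $E_k/\gamma_k^{2}\to S_\Omega^{\delta}-|\Omega|>0$, i.e. $\frac{4\pi}{E_k}=O(\gamma_k^{-2})$, it then remains only to show $\int_{\Omega}(v_k^{\delta'})^{2}e^{(v_k^{\delta'})^{2}}\,dx=O(\gamma_k^{-2})$.

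To do this I would invoke Lemma \ref{le3.63}, which for $k$ large gives $\|\nabla v_k^{\delta'}\|_{L^{2}(\Omega)}^{2}\le 4\pi(1-\delta'+o_k(1))\le 4\pi(1-\delta'/2)$. Writing $w_k:=v_k^{\delta'}/\|\nabla v_k^{\delta'}\|_{L^{2}(\Omega)}$, so that $\|\nabla w_k\|_{L^{2}}=1$, and fixing once and for all an exponent $q>1$ so small that $\alpha:=4\pi q(1-\delta'/2)<4\pi$, the pointwise bound $q(v_k^{\delta'})^{2}\le\alpha w_k^{2}$ together with the Trudinger--Moser inequality \eqref{1} yields a constant $C=C(\Omega,\delta')$, independent of $k$, with $\bigl\|e^{(v_k^{\delta'})^{2}}\bigr\|_{L^{q}(\Omega)}\le C$. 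On the other hand $0\le v_k^{\delta'}\le v_k$, and Proposition \ref{pro4.9} gives $\int_\Omega(\gamma_k v_k)^{r}\,dx=O(1)$ for every finite $r$; taking $r=2q/(q-1)$ this produces $\bigl\|(v_k^{\delta'})^{2}\bigr\|_{L^{q/(q-1)}(\Omega)}=\|v_k^{\delta'}\|_{L^{2q/(q-1)}(\Omega)}^{2}\le\|v_k\|_{L^{2q/(q-1)}(\Omega)}^{2}=O(\gamma_k^{-2})$. Hölder's inequality then gives
\[
\int_{\Omega}(v_k^{\delta'})^{2}e^{(v_k^{\delta'})^{2}}\,dx
\le\bigl\|(v_k^{\delta'})^{2}\bigr\|_{L^{q/(q-1)}(\Omega)}\,\bigl\|e^{(v_k^{\delta'})^{2}}\bigr\|_{L^{q}(\Omega)}=O(\gamma_k^{-2}),
\]
and multiplying by $\frac{4\pi}{E_k}=O(\gamma_k^{-2})$ completes the proof of \eqref{out}.

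All the genuine difficulty has already been absorbed into Lemma \ref{le3.63} (which itself rests on the full blow-up analysis of the previous subsections) and into the sharp asymptotics of Proposition \ref{pro4.9}; given these, the argument above is short and elementary. The only mildly delicate point is that the subcritical exponent $q>1$ must be chosen uniformly in $k$, which is possible precisely because $\delta'$ is a fixed positive number, so the Trudinger--Moser constant attached to $\alpha<4\pi$ stays finite along the whole sequence. I therefore do not expect any essential obstacle here beyond carefully bookkeeping the powers of $\gamma_k$.
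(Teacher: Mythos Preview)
Your proof is correct and is essentially the same argument as the paper's: both replace $v_k$ by $v_k^{\delta'}$ on $\Omega\setminus B_{r_{k,\delta}}(x_k)$, use H\"older's inequality to split into a polynomial factor controlled by Proposition~\ref{pro4.9} (equivalently \eqref{limfun}) and an exponential factor controlled by Lemma~\ref{le3.63} plus the subcritical Trudinger--Moser inequality, and finish with Lemma~\ref{le4.5} for the $E_k$ asymptotics. The only cosmetic difference is that you enlarge the domain to all of $\Omega$ before H\"older and then dominate $v_k^{\delta'}\le v_k$, whereas the paper keeps the integral on $\Omega\setminus B_{r_{k,\delta}}(x_k)$ and enlarges only the exponential factor; your $q$ plays the role of the paper's $q'$.
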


\begin{proof}
Using H\"{o}lder's inequality, it follows from Lemma \ref{le4.5} and the
definition of ${v}_{k}^{{{\delta^{\prime}}}}$ that
\begin{align*}
\int_{\Omega\backslash B_{r_{k,\delta}}\left(  x_{k}\right)  }\frac{4\pi
}{E_{k}}v_{k}^{2}e^{v_{k}^{2}}dx &  =\frac{4\pi}{E_{k}\gamma_{k}^{2}}%
\int_{\Omega\backslash B_{r_{k,\delta}}\left(  x_{k}\right)  }\gamma_{k}%
^{2}v_{k}^{2}e^{v_{k}^{2}}dx\\
&  =\frac{4\pi\left(  1+o_{k}\left(  1\right)  \right)  }{\left(  S_{\Omega
}^{\delta}-\left\vert \Omega\right\vert \right)  \gamma_{k}^{4}}\int
_{\Omega\backslash B_{r_{k,\delta}}\left(  x_{k}\right)  }\gamma_{k}^{2}%
v_{k}^{2}e^{v_{k}^{2}}dx\\
&  \leq\frac{4\pi\left(  1+o_{k}\left(  1\right)  \right)  }{\left(
S_{\Omega}^{\delta}-\left\vert \Omega\right\vert \right)  \gamma_{k}^{4}%
}\left(  \int_{\Omega\backslash B_{r_{k,\delta}}\left(  x_{k}\right)  }\left(
\gamma_{k}v_{k}\right)  ^{2q}dx\right)  ^{\frac{1}{q}}\left(  \int_{\Omega
}e^{q^{\prime}\left(  {v}_{k}^{{{\delta^{\prime}}}}\right)  ^{2}}dx\right)
^{\frac{1}{q^{\prime}}}\text{,}%
\end{align*}
where $\frac{1}{q}+\frac{1}{q^{\prime}}=1$. Combining Lemma \ref{le3.63}, the
Trudinger-Moser inequality (\ref{1}) and (\ref{limfun}) in the Appendix,
(\ref{out}) is proved provided $q^{\prime}$ sufficiently close to$\ 1$.
\end{proof}

Now, it remains to estimate the integral of the exponential term in
${B_{{r_{k,\delta}}}}\left(  {{x_{k}}}\right)  $. As a direct consequence of
Lemma \ref{15} and Lemma \ref{507}, for $x\in$ $B_{r_{k,\delta}}\left(
x_{k}\right)  $, we have%
\begin{align*}
\frac{4\pi}{E_{k}}v_{k}e^{v_{k}^{2}}  &  =\frac{4\pi}{E_{k}}\left(
V_{k}+O\left(  \frac{\left\vert \cdot-x_{k}\right\vert }{\gamma_{k}%
r_{k,\delta\text{ }}}\right)  \right)  \exp\left(  V_{k}+O\left(
\frac{\left\vert \cdot-x_{k}\right\vert }{\gamma_{k}r_{k,\delta\text{ }}%
}\right)  \right)  ^{2}\\
&  =\frac{4\pi}{E_{k}}\left(  V_{k}+O\left(  \frac{\left\vert \cdot
-x_{k}\right\vert }{\gamma_{k}r_{k,\delta\text{ }}}\right)  \right)
\exp\left(  V_{k}^{2}+O\left(  \frac{\left\vert \cdot-x_{k}\right\vert
}{r_{k,\delta\text{ }}}\right)  \right) \\
&  =\frac{4\pi}{E_{k}}V_{k}\exp\left(  V_{k}^{2}\right)  \left(  1+O\left(
\frac{\left\vert \cdot-x_{k}\right\vert }{r_{k,\delta\text{ }}}\right)
\right)  \text{.}%
\end{align*}
In view of (\ref{main}), we have%
\begin{align}
\frac{4\pi}{E_{k}}v_{k}e^{v_{k}^{2}}  &  =\frac{4\exp\left(  -2t_{k}\right)
}{r_{k}^{2}\gamma_{k}}\left[  1+\frac{2S_{k}+t_{k}^{2}-t_{k}}{\gamma_{k}^{2}%
}\right. \nonumber\\
&  \left.  +O\left(  \frac{(1+t_{k}^{4})\exp(t_{k}^{2}/\gamma_{k}^{2})}%
{\gamma_{k}^{4}}+\left(  1+\frac{t_{k}^{2}}{\gamma_{k}^{2}}\right)
\frac{\left\vert \cdot-x_{k}\right\vert }{r_{k,\delta\text{ }}}\right)
\right]  \label{210}%
\end{align}
in $B_{r_{k,\delta}}\left(  x_{k}\right)  $. Thanks to (\ref{T}), we can
estimate the first error term in (\ref{210}). Specifically, we can find
$\alpha>1$ such that%
\begin{align*}
\frac{4\pi}{E_{k}}v_{k}e^{v_{k}^{2}}  &  =\frac{4\exp\left(  -2t_{k}\right)
}{r_{k}^{2}\gamma_{k}}\left[  1+\frac{2S_{k}+t_{k}^{2}-t_{k}}{\gamma_{k}^{2}%
}+O\left(  1+\frac{t_{k}^{2}}{\gamma_{k}^{2}}\right)  \frac{\left\vert
\cdot-x_{k}\right\vert }{r_{k,\delta\text{ }}}\right] \\
&  +O(\frac{\exp(-\alpha t_{k})}{r_{k}^{2}\gamma_{k}^{5}})\text{.}%
\end{align*}
Using Lemma \ref{15} and Lemma \ref{507} again, then for $x\in$
$B_{r_{k,\delta}}\left(  x_{k}\right)  $, we can get
\begin{align}
\frac{4\pi}{E_{k}}v_{k}^{2}e^{v_{k}^{2}}  &  =\frac{4\exp\left(
-2t_{k}\right)  }{r_{k}^{2}\gamma_{k}}\left(  V_{k}+O\left(  \frac{\left\vert
\cdot-x_{k}\right\vert }{\gamma_{k}r_{k,\delta\text{ }}}\right)  \right)
\left[  1+\frac{2S_{k}+t_{k}^{2}-t_{k}}{\gamma_{k}^{2}}\right. \nonumber\\
&  \left.  +O\left(  1+\frac{t_{k}^{2}}{\gamma_{k}^{2}}\right)  \frac
{\left\vert \cdot-x_{k}\right\vert }{r_{k,\delta\text{ }}}\right]  +O\left(
\frac{\exp(-\alpha t_{k})}{r_{k}^{2}\gamma_{k}^{5}}\right)  \left(
V_{k}+O\left(  \frac{\left\vert \cdot-x_{k}\right\vert }{\gamma_{k}%
r_{k,\delta\text{ }}}\right)  \right) \nonumber\\
&  =\frac{4\exp\left(  -2t_{k}\right)  }{r_{k}^{2}\gamma_{k}}\left(
\gamma_{k}-\frac{t_{k}}{\gamma_{k}}+o\left(  \frac{t_{k}}{\gamma_{k}}\right)
+O\left(  \frac{\left\vert \cdot-x_{k}\right\vert }{\gamma_{k}r_{k,\delta
\text{ }}}\right)  \right) \nonumber\\
&  \cdot\left[  1+\frac{2S_{k}+t_{k}^{2}-t_{k}}{\gamma_{k}^{2}}+O\left(
1+\frac{t_{k}^{2}}{\gamma_{k}^{2}}\right)  \frac{\left\vert \cdot
-x_{k}\right\vert }{r_{k,\delta\text{ }}}\right]  +O\left(  \frac{\exp(-\alpha
t_{k})}{r_{k}^{2}\gamma_{k}^{4}}\right) \label{211}\\
&  =\frac{4\exp\left(  -2t_{k}\right)  }{r_{k}^{2}}\left(  1-\frac{t_{k}%
}{\gamma_{k}^{2}}+o\left(  \frac{t_{k}}{\gamma_{k}^{2}}\right)  \right)
\left[  1+\frac{2S_{k}+t_{k}^{2}-t_{k}}{\gamma_{k}^{2}}\right. \nonumber\\
&  \left.  +O\left(  1+\frac{t_{k}^{2}}{\gamma_{k}^{2}}\right)  \frac
{\left\vert \cdot-x_{k}\right\vert }{r_{k,\delta\text{ }}}\right]  +O\left(
\frac{\exp(-\alpha t_{k})}{r_{k}^{2}\gamma_{k}^{4}}\right) \nonumber\\
&  =\frac{4\exp\left(  -2t_{k}\right)  }{r_{k}^{2}}\left[  1+\frac
{2S_{k}+t_{k}^{2}-2t_{k}}{\gamma_{k}^{2}}+O\left(  1+\frac{t_{k}^{2}}%
{\gamma_{k}^{2}}\right)  \frac{\left\vert \cdot-x_{k}\right\vert }%
{r_{k,\delta\text{ }}}\right] \nonumber\\
&  +O\left(  \frac{\exp(-\alpha t_{k})}{r_{k}^{2}\gamma_{k}^{4}}\right)
\text{.}\nonumber
\end{align}

From the above analysis, we can obtain the integral estimates  on the small ball
$B_{r_{k,\delta}}\left(  x_{k}\right)  $.
\begin{proposition}
\label{pro3.1}It holds that
\[
\frac{{4\pi}}{{{E_{k}}}}\int_{B_{r_{k,\delta}}\left(  x_{k}\right)  }%
{v_{k}^{2}}\exp\left(  {v_{k}^{2}}\right)  dy=4\pi+O\left(  {\frac{1}%
{{\gamma_{k}^{4}}}}\right)  .
\]

\end{proposition}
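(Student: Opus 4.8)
The plan is to integrate the pointwise expansion (\ref{211}) over $B_{r_{k,\delta}}(x_k)$ term by term, after the rescaling $y=x_k+r_k z$, which maps this ball onto $B_{\varrho_k}(0)$ with $\varrho_k:=r_{k,\delta}/r_k\to\infty$. Under this change of variables one has $\frac{e^{-2t_k}}{r_k^2}\,dy=(1+|z|^2)^{-2}\,dz=e^{2\phi_\infty(z)}\,dz$, $t_k=\log(1+|z|^2)=-\phi_\infty(z)$, and $S_k=S_0(z)$ identically; moreover, by (\ref{212}), $\varrho_k^{-2}=r_k^2/r_{k,\delta}^2=\exp(-\delta\gamma_k^2+o_k(1))$ is smaller than any negative power of $\gamma_k$. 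The leading order term then produces $\int_{B_{r_{k,\delta}}(x_k)}\frac{4e^{-2t_k}}{r_k^2}\,dy=4\int_{B_{\varrho_k}(0)}(1+|z|^2)^{-2}\,dz=4\pi\bigl(1-(1+\varrho_k^2)^{-1}\bigr)=4\pi+O(\varrho_k^{-2})$, i.e.\ $4\pi$ up to an error negligible against $\gamma_k^{-4}$. The two remainders in (\ref{211}) are equally harmless: since $\alpha>1$, $\int_{B_{r_{k,\delta}}(x_k)}O\bigl(\tfrac{e^{-\alpha t_k}}{r_k^2\gamma_k^4}\bigr)\,dy=O(\gamma_k^{-4})\int_{B_{\varrho_k}(0)}(1+|z|^2)^{-\alpha}\,dz=O(\gamma_k^{-4})$, while the term $O\bigl(1+t_k^2/\gamma_k^2\bigr)\tfrac{|\cdot-x_k|}{r_{k,\delta}}$ contributes, after rescaling, $O(\varrho_k^{-1})\int_{B_{\varrho_k}(0)}\bigl(1+\gamma_k^{-2}(\log(1+|z|^2))^2\bigr)\tfrac{|z|}{(1+|z|^2)^2}\,dz=O(\varrho_k^{-1})$, because the $z$-integral stays bounded uniformly in $k$.

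The only contribution that is a priori of size $\gamma_k^{-2}$ rather than smaller is $\gamma_k^{-2}\int_{B_{r_{k,\delta}}(x_k)}\frac{4e^{-2t_k}}{r_k^2}(2S_k+t_k^2-2t_k)\,dy$, which after rescaling equals $\gamma_k^{-2}\int_{B_{\varrho_k}(0)}4e^{2\phi_\infty}(2S_0+\phi_\infty^2+2\phi_\infty)\,dz$. The crux is the cancellation identity $\int_{\mathbb{R}^2}4e^{2\phi_\infty}(2S_0+\phi_\infty^2+2\phi_\infty)\,dz=0$. To prove it, I would use the defining equation (\ref{57}) of $S_0$ to write $4e^{2\phi_\infty}\phi_\infty^2=-\Delta S_0-8e^{2\phi_\infty}S_0-4e^{2\phi_\infty}\phi_\infty$ and integrate: by (\ref{56}) one has $\int_{\mathbb{R}^2}(-\Delta S_0)\,dz=A_0=4\pi$, and integrating by parts with $-\Delta\phi_\infty=4e^{2\phi_\infty}$ together with the explicit $\phi_\infty(z)=-\log(1+|z|^2)$ gives $\int_{\mathbb{R}^2}4e^{2\phi_\infty}\phi_\infty\,dz=\int_{\mathbb{R}^2}(-\phi_\infty\Delta\phi_\infty)\,dz=-4\pi$; all these integrals converge absolutely thanks to the asymptotics (\ref{s0}). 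Substituting back, the two occurrences of $8\int_{\mathbb{R}^2}e^{2\phi_\infty}S_0\,dz$ cancel and one is left with $4\pi+4\int_{\mathbb{R}^2}e^{2\phi_\infty}\phi_\infty\,dz=4\pi-4\pi=0$. Consequently the truncated integral over $B_{\varrho_k}(0)$ equals minus its tail $\int_{|z|>\varrho_k}$, whose integrand decays like $(\log|z|^2)^2|z|^{-4}$, so this whole term is $O\bigl(\gamma_k^{-2}\varrho_k^{-2}(\log\varrho_k)^2\bigr)$, again negligible against $\gamma_k^{-4}$.

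Adding the four contributions gives $\frac{4\pi}{E_k}\int_{B_{r_{k,\delta}}(x_k)}v_k^2e^{v_k^2}\,dy=4\pi+O(\gamma_k^{-4})$, which is the claim. The main obstacle is precisely the cancellation identity $\int_{\mathbb{R}^2}e^{2\phi_\infty}(2S_0+\phi_\infty^2+2\phi_\infty)\,dz=0$: if this integral were nonzero the expansion would only read $4\pi+O(\gamma_k^{-2})$, which is too weak to feed into the sharp Dirichlet energy expansion and the eventual contradiction with $\|\nabla v_k\|_{L^2}^2=4\pi$. Everything else is routine bookkeeping — uniform convergence of the rescaled $z$-integrals and tail terms that are exponentially small in $\gamma_k^2$ by (\ref{212}).
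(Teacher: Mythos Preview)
Your proposal is correct and follows essentially the same route as the paper: integrate the pointwise expansion (\ref{211}) term by term after the rescaling $y=x_k+r_kz$, handle the two remainder terms exactly as you do (the paper calls them $L_2=O(r_k/r_{k,\delta})$ and $L_3=O(\gamma_k^{-4})$), and kill the $\gamma_k^{-2}$ contribution via the cancellation identity $\int_{\mathbb{R}^2}e^{2\phi_\infty}(2S_0+\phi_\infty^2+2\phi_\infty)\,dz=0$, which the paper also derives from (\ref{56}), (\ref{57}) and the direct computation $\int_{\mathbb{R}^2}4e^{2\phi_\infty}\phi_\infty\,dz=-4\pi$. Your treatment of the tail as $O(\gamma_k^{-2}\varrho_k^{-2}(\log\varrho_k)^2)$ is slightly sharper than the paper's $O((r_k/r_{k,\delta})^\sigma)$, but both are exponentially small in $\gamma_k^2$ and hence negligible against $\gamma_k^{-4}$.
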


\begin{proof}
It follows from (\ref{211}) that
\begin{align*}
&  \frac{{4\pi}}{{{E_{k}}}}\int_{B_{r_{k,\delta}}\left(  x_{k}\right)  }%
{v_{k}^{2}}\exp\left(  {v_{k}^{2}}\right)  dy\\
&  =\int_{B_{r_{k,\delta}}\left(  x_{k}\right)  }\frac{4\exp\left(
-2t_{k}\right)  }{r_{k}^{2}}\left(  1+\frac{2S_{k}+t_{k}^{2}-2t_{k}}%
{\gamma_{k}^{2}}\right)  dy+O\left(  \int_{B_{r_{k,\delta}}\left(
x_{k}\right)  }\frac{4\exp\left(  -2t_{k}\right)  }{r_{k}^{2}}\left(
1+\frac{t_{k}^{2}}{\gamma_{k}^{2}}\right)  \frac{\left\vert \cdot
-x_{k}\right\vert }{r_{k,\delta\text{ }}}dy\right) \\
&  +O\left(  \int_{B_{r_{k,\delta}}\left(  x_{k}\right)  }\frac{\exp(-\alpha
t_{k})}{r_{k}^{2}\gamma_{k}^{4}}dy\right) \\
&  =:L_{1}+{L_{2}+L_{3}}\text{.}%
\end{align*}
From (\ref{212}) and by a direct calculation, we have
\begin{align*}
{L_{2}}  &  {=O}\left(  \int_{{B_{{{r_{k,\delta}/r}}_{k}}}\left(  {0}\right)
}\left(  \frac{1}{1+\left\vert x\right\vert ^{2}}\right)  ^{2}\left(
1+\frac{\log^{2}\left(  1+\left\vert x\right\vert ^{2}\right)  }{\gamma
_{k}^{2}}\right)  \frac{\left\vert x\right\vert r_{k}}{r_{k,\delta\text{ }}%
}dx\right) \\
&  =O\left(  \frac{r_{k}}{r_{k,\delta\text{ }}}\int_{{B_{{{r_{k,\delta}/r}%
}_{k}}}\left(  {0}\right)  }\left(  \frac{1}{1+\left\vert x\right\vert ^{2}%
}\right)  ^{2}\left\vert x\right\vert dx\right)  =O\left(  \frac{r_{k}%
}{r_{k,\delta\text{ }}}\right)  \text{.}%
\end{align*}
Since $\alpha>1$, the calculation of $L_{3}$ follows in a similar manner,
\[
L{_{3}}=\frac{1}{{\gamma_{k}^{4}}}{\int_{{B_{{{r_{k,\delta}/r}}_{k}}}\left(
{0}\right)  }}{\left(  {\frac{1}{{1+}\left\vert x\right\vert {{^{2}}}}%
}\right)  ^{\alpha}}dx=O\left(  {\frac{1}{{\gamma_{k}^{4}}}}\right)  \text{.}%
\]
Hence
\begin{align}
&  \frac{{4\pi}}{{{E_{k}}}}\int_{B_{r_{k,\delta\text{ }}}\left(  x_{k}\right)
}{v_{k}^{2}}\exp\left(  {v_{k}^{2}}\right)  dy\nonumber\\
&  =4\int_{{B_{{{r_{k,\delta}/r}}_{k}}}\left(  {0}\right)  }\exp\left(
2\phi_{\infty}\right)  \left(  1+\frac{2S_{0}+\phi_{\infty}^{2}+2\phi_{\infty
}}{\gamma_{k}^{2}}\right)  dx+O\left(  {\frac{1}{{\gamma_{k}^{4}}}+}%
\frac{r_{k}}{r_{k,\delta\text{ }}}\right)  \text{.} \label{3.97}%
\end{align}
\ By \eqref{56} and a direct calculation, we get
\[
\int_{\mathbb{R}^{2}}\left(  -\Delta S_{0}\right)  dx=4\pi=-\int
_{\mathbb{R}^{2}}4\exp\left(  2\phi_{\infty}\right)  \phi_{\infty}dx\text{,}%
\]
combining this with \eqref{57}, we can obtain
\begin{equation}
\int_{\mathbb{R}^{2}}\exp\left(  2\phi_{\infty}\right)  \left(  2S_{0}%
+\phi_{\infty}^{2}+2\phi_{\infty}\right)  dx=0. \label{add60}%
\end{equation}
Now, using (\ref{212}) again and by a direct calculation, for any $0<\sigma
<1$, we have
\begin{equation}
\int_{%
\mathbb{R}
^{2}\backslash{B_{{{r_{k,\delta}/r}}_{k}}}\left(  {0}\right)  }\exp\left(
2\phi_{\infty}\right)  \left(  1+\phi_{\infty}^{2}+S_{0}\right)  dx=O\left(
\left(  \frac{r_{k}}{r_{k,\delta\text{ }}}\right)  ^{\sigma}\right)  ,
\label{add69}%
\end{equation}
Combining (\ref{3.97}), (\ref{add60}), (\ref{add69}) and (\ref{add690}), we
get%
\[
\frac{{4\pi}}{{{E_{k}}}}\int_{B_{r_{k,\delta}}\left(  x_{k}\right)  }%
{v_{k}^{2}}\exp\left(  {v_{k}^{2}}\right)  dy=4\pi+O\left(  {\frac{1}%
{{\gamma_{k}^{4}}}}\right)  \text{.}%
\]
Then the proof is completed.
\end{proof}

In summary, we have obtained the Lebesgue perturbation term $I_{P}$\ (see
Proposition \ref{223}) and exponential term $I_{E}$ (see Propositions
\ref{pro4.10} and \ref{pro3.1}), respectively. Now, we can obtain the
following Dirichlet energy expansion formula for $\left\Vert {\nabla}u{{_{k}}%
}\right\Vert _{{L^{2}}}^{2}$. \

\begin{proposition}
\label{pro3.2} Let $p\in\lbrack1,2]$, there exist positive constants $C_{1},$
$C_{2}$ such that as $\lambda_{k}\rightarrow\infty$
\[
\left\Vert {\nabla{u_{k}}}\right\Vert _{{L^{2}}}^{2}\leq1-\lambda_{k}%
\frac{{{C_{1}}}}{{\gamma}_{k}^{2+p}}+\frac{{{C_{2}}}}{{\gamma}_{k}^{4}%
}+o\left(  {\gamma_{k}^{-4}}\right)  \text{.}%
\]

\end{proposition}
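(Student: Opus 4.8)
The plan is to read the stated bound off from the identity (\ref{14}) by combining an upper bound for $I_{E}$ with a lower bound for $I_{P}$. Since $v_{k}=2\sqrt{\pi}u_{k}$ and $\int_{\Omega}|\nabla u_{k}|^{2}\,dx=1$, (\ref{14}) gives $\|\nabla u_{k}\|_{L^{2}}^{2}=\tfrac{1}{4\pi}\|\nabla v_{k}\|_{L^{2}}^{2}=\tfrac{1}{4\pi}(I_{E}-I_{P})$, where $I_{E}=\tfrac{4\pi}{E_{k}}\int_{\Omega}v_{k}^{2}e^{v_{k}^{2}}\,dx$ and $I_{P}=\tfrac{p(4\pi)^{1-p/2}}{2E_{k}}\lambda_{k}\int_{\Omega}v_{k}^{p}\,dx$. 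Thus it suffices to show $\tfrac{1}{4\pi}I_{E}\leq 1+C_{2}\gamma_{k}^{-4}+o(\gamma_{k}^{-4})$ and $\tfrac{1}{4\pi}I_{P}\geq C_{1}\lambda_{k}\gamma_{k}^{-2-p}$ for suitable positive constants $C_{1},C_{2}$, after which the two estimates recombine into the claim.

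For $I_{E}$ I would split $\Omega$ into $B_{r_{k,\delta}}(x_{k})$ and its complement. Proposition \ref{pro3.1} gives $\tfrac{4\pi}{E_{k}}\int_{B_{r_{k,\delta}}(x_{k})}v_{k}^{2}e^{v_{k}^{2}}\,dy=4\pi+O(\gamma_{k}^{-4})$, and Proposition \ref{pro4.10} gives $\tfrac{4\pi}{E_{k}}\int_{\Omega\setminus B_{r_{k,\delta}}(x_{k})}v_{k}^{2}e^{v_{k}^{2}}\,dx\leq C\gamma_{k}^{-4}$; adding these there is a constant $C_{2}>0$ with $\tfrac{1}{4\pi}I_{E}\leq 1+C_{2}\gamma_{k}^{-4}+o(\gamma_{k}^{-4})$. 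The $o(\gamma_{k}^{-4})$ only records the terms of order $r_{k}/r_{k,\delta}=\exp(-\tfrac{\delta}{2}\gamma_{k}^{2}+o(1))$ that appear in the proof of Proposition \ref{pro3.1}, which are smaller than every power of $\gamma_{k}^{-1}$.

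For $I_{P}$ I would use Proposition \ref{223}, which says $\tfrac{1}{4\pi}I_{P}=\dfrac{p(4\pi)^{-p/2}}{2(S_{\Omega}^{\delta}-|\Omega|)}\dfrac{\lambda_{k}}{\gamma_{k}^{p+2}}\big(\int_{\Omega}G_{\Omega,x_{0}}^{p}\,dx+o_{k}(1)\big)$. Since $G_{\Omega,x_{0}}>0$ in $\Omega$ the number $\int_{\Omega}G_{\Omega,x_{0}}^{p}\,dx$ is a fixed positive constant, and $S_{\Omega}^{\delta}-|\Omega|=\pi e\sup_{\Omega}r_{\Omega}^{2}>0$ by (\ref{addcon}); hence for all large $k$ the bracket exceeds half of $\int_{\Omega}G_{\Omega,x_{0}}^{p}\,dx$, so that $\tfrac{1}{4\pi}I_{P}\geq C_{1}\lambda_{k}\gamma_{k}^{-2-p}$ with a fixed $C_{1}>0$; choosing $C_{1}$ to be half of the natural constant absorbs the $o_{k}(1)$ outright, so no further error bookkeeping is needed here. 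Plugging both bounds into $\|\nabla u_{k}\|_{L^{2}}^{2}=\tfrac{1}{4\pi}(I_{E}-I_{P})$ yields $\|\nabla u_{k}\|_{L^{2}}^{2}\leq 1-C_{1}\lambda_{k}\gamma_{k}^{-2-p}+C_{2}\gamma_{k}^{-4}+o(\gamma_{k}^{-4})$.

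The main obstacle is not this proposition itself — it is a short bookkeeping step — but the inputs behind it: the sharp interior expansion of Proposition \ref{pro3.1}, which rests on the higher-order blow-up analysis of the radial solutions $V_{k}$ (Lemma \ref{15}), on the comparison principle Lemma \ref{507}, and on the pointwise gradient estimates of Lemmas \ref{52} and \ref{85}. Within the present argument the only thing to watch is that every discarded remainder (the $o_{k}(1)$ multiplying $\lambda_{k}\gamma_{k}^{-2-p}$ in $I_{P}$, and the $r_{k}/r_{k,\delta}$ contributions in $I_{E}$) is negligible against $\gamma_{k}^{-4}$ or can be swallowed by shrinking $C_{1}$ or enlarging $C_{2}$, so that the one-sided inequality is preserved. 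Finally I would note, for later use in the proof of Theorem \ref{th1.1}, that inserting the exact value $\|\nabla u_{k}\|_{L^{2}}^{2}=1$ into this inequality forces $C_{1}\lambda_{k}\gamma_{k}^{-2-p}\leq C_{2}\gamma_{k}^{-4}+o(\gamma_{k}^{-4})$, i.e. $\lambda_{k}=O(\gamma_{k}^{p-2})$, which is bounded when $p\in[1,2]$ and contradicts $\lambda_{k}\to\infty$.
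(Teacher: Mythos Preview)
Your proposal is correct and follows essentially the same approach as the paper: both combine identity (\ref{14}) with Proposition \ref{223} for $I_{P}$ and Propositions \ref{pro4.10} and \ref{pro3.1} for $I_{E}$, then assemble the two one-sided bounds. Your treatment is in fact slightly more explicit than the paper's, since you spell out why $C_{1}>0$ (positivity of $\int_{\Omega}G_{\Omega,x_{0}}^{p}\,dx$ and of $S_{\Omega}^{\delta}-|\Omega|$) and how the $o_{k}(1)$ in $I_{P}$ is absorbed by halving the natural constant.
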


\begin{proof}
Using \eqref{14}, we can get from Proposition \ref{223}, Proposition
\ref{pro4.10} and Proposition \ref{pro3.1} that
\begin{align*}
\left\vert \left\vert \nabla u_{k}\right\vert \right\vert _{L^{2}}^{2}  &
=\frac{1}{4\pi}\left\vert \left\vert \nabla v_{k}\right\vert \right\vert
_{L^{2}}^{2}\\
&  =\frac{1}{4\pi}\left[  \frac{4\pi}{E_{k}}\int_{\Omega}\left(  v_{k}%
^{2}e^{v_{k}^{2}}-\frac{p}{2\left(  4\pi\right)  ^{\frac{p}{2}}}\lambda
_{k}v_{k}^{p}\right)  dx\right] \\
&  =\frac{1}{4\pi}\left[  \frac{4\pi}{E_{k}}\left(  \int_{\Omega\backslash
B_{r_{k,\delta\text{ }}}\left(  x_{k}\right)  }v_{k}^{2}e^{v_{k}^{2}}%
dx+\int_{B_{r_{k,\delta\text{ }}}\left(  x_{k}\right)  }v_{k}^{2}e^{v_{k}^{2}%
}dx\right)  \right. \\
&  \text{ \ \ \ \ }\left.  -\frac{4\pi}{E_{k}}\int_{\Omega}\frac{p}{2\left(
4\pi\right)  ^{\frac{p}{2}}}\lambda_{k}v_{k}^{p}dx\right] \\
&  \leq1-\lambda_{k}\frac{C_{1}}{\gamma_{k}^{2+p}}+\frac{C_{2}}{\gamma_{k}%
^{4}}+o\left(  \gamma_{k}^{-4}\right)  \text{,}%
\end{align*}
this completes the proof.
\end{proof}

Now, we are in position to give the proof of Theorem \ref{th1.1}. \medskip

\begin{proof}
[Proof of Theorem \ref{th1.1}]Recall that we have assumed that for any
$\lambda>0$, $S_{\Omega}(\lambda,p)$ could be achieved by some $u_{\lambda}%
\ $satisfying $\Vert\nabla u_{\lambda}\Vert_{L^{2}}=1$. In view of Proposition
\ref{pro3.2}, there exits subsequence $\left(  u_{k}\right)  $ of $\left(
u_{\lambda}\right)  $ such that $\left\vert \left\vert \nabla u_{k}\right\vert
\right\vert _{L^{2}}^{2}$ is strictly smaller than $1$, provided $\lambda_{k}$
large enough, this arrives at a contradiction with $\Vert\nabla u_{\lambda
}\Vert_{L^{2}}=1$. Hence, we conclude that there exists sufficiently large
$\lambda>0$ such that $S_{\Omega}(\lambda,p)$ could not be achieved. Define
$\ $
\[
\lambda^{\ast}(p):=\sup\left\{  \lambda>0\ |\ S_{\Omega}\left(  \lambda
,p\right)  \text{ }\mathrm{could\ be\ achieved}\right\}  <+\infty.
\]

We claim that
\begin{equation}
S_{\Omega}\left(  \lambda^{\ast}(p),p\right)  =S_{\Omega}^{\delta
}.\label{add6101}%
\end{equation}
In fact, if $S_{\Omega}\left(  \lambda^{\ast}(p),p\right)  >S_{\Omega}%
^{\delta}$, then $S_{\Omega}\left(  \lambda^{\ast}(p),p\right)  $ is achieved
by some $u_{\lambda^{\ast}}$ through Lemma \ref{225}. For $\lambda
>\lambda^{\ast}(p)$ and sufficiently close to $\lambda^{\ast}(p)$, we have
\[
S_{\Omega}(\lambda,p)\geq\int_{\Omega}\left(  e^{{4\pi}u_{\lambda^{\ast}}^{2}%
}-\lambda|u_{\lambda^{\ast}}|^{p}\right)  dx>S_{\Omega}^{\delta}.
\]
By Lemma \ref{225}, we see that $S_{\Omega}(\lambda,p)$ is achieved for
$\lambda$ slightly bigger than $\lambda^{\ast}(p)$, which arrives at a
contradiction with the definition of $\lambda^{\ast}(p)$. Hence the claim is
proved. From (\ref{add6101}) and using Lemma \ref{225} again, we can
accomplish the proof of Theorem \ref{th1.1}.
\end{proof}

\medskip

\section{ Appendix}

In this section, we give the proof of Proposition \ref{pro4.9}. \medskip

\begin{proof}
[Proof of Proposition \ref{pro4.9}]From the equation \eqref{eq of vn}, we see
that
\begin{equation}%
\begin{cases}
-\Delta\left(  \gamma_{k}v_{k}\right)  =\frac{4\pi}{E_{k}}\left(  \gamma
_{k}v_{k}\right)  e^{v_{k}^{2}}-\frac{4\pi}{E_{k}}\frac{p}{2{\left(  {4\pi
}\right)  ^{p/2}}}\lambda_{k}\gamma_{k}v_{k}^{p-1}\text{,} & \text{ in }%
\Omega\text{,}\\
v_{k}=0\text{,} & \text{ on }\partial\Omega\text{.}%
\end{cases}
\label{242}%
\end{equation}
Using the similar argument as in \cite[Proposition $3.7$]{liruf} (see also
\cite{Str}), we know that
\[
\left\Vert \nabla\left(  \gamma_{k}v_{k}\right)  \right\Vert _{L^{\gamma
}\left(  \Omega\right)  }\leq C(\gamma)
\]
for any $\gamma\in(1,2)$. Thus, after passing to a subsequence, there exists
some $\omega\in W_{0}^{1,\gamma}(\Omega)$ such that
\begin{equation}
\gamma_{k}v_{k}\rightharpoonup\omega\text{ in }W_{0}^{1,\gamma}(\Omega
)\text{,}\label{239}%
\end{equation}
and by the compactness of the Sobolev embedding, we get
\begin{equation}
\int_{\Omega}\left(  \gamma_{k}v_{k}\right)  ^{q}dx\rightarrow\int_{\Omega
}\omega^{q} dx\text{.}\label{limfun}%
\end{equation}
for any $1\leq q<\infty$. \vskip0.1cm

We next claim that $\omega\not \equiv 0$. We distinguish two cases: $p=1$ and
$p\in(1,2]$.

In the case of $p=1$, we suppose that $\omega=0$, it is observed from Lemma
\ref{le4.6} that%
\begin{align*}
0 &  =\underset{k\rightarrow\infty}{\lim}\int_{\Omega}\nabla\left(  \gamma
_{k}v_{k}\right)  \nabla\eta dx\\
&  =\underset{k\rightarrow\infty}{\lim}\frac{4\pi}{E_{k}}\int_{\Omega}\left[
\left(  \gamma_{k}v_{k}\right)  e^{v_{k}^{2}}\eta-\frac{1}{2{\left(  {4\pi
}\right)  ^{1/2}}}\lambda_{k}\gamma_{k}\eta\right]  dx\\
&  =4\pi\eta\left(  x_{0}\right)  -\underset{k\rightarrow\infty}{\lim}%
\frac{{\left(  {4\pi}\right)  ^{1/2}}}{E_{k}}\frac{\lambda_{k}\gamma_{k}}%
{2}\int_{\Omega}\eta dx
\end{align*}
for any $\eta\in C_{0}^{\infty}\left(  \Omega\right)  $, which is impossible,
hence, $\omega\not \equiv 0$. Using (\ref{238}), we get%
\[
o_{k}\left(  1\right)  =\frac{\lambda_{k}}{\gamma_{k}}\int_{\Omega}\gamma
_{k}v_{k}dx=\frac{\lambda_{k}}{\gamma_{k}}\left(  \int_{\Omega}\omega
dx+o_{k}\left(  1\right)  \right)  ,\text{as }k\rightarrow\infty,
\]
which means $\frac{\lambda_{k}}{\gamma_{k}}=o_{k}\left(  1\right)  $ due to
$\omega\not \equiv 0$. This together with Lemma \ref{le4.5} imply
\begin{equation}
\frac{\lambda_{k}\gamma_{k}}{E_{k}}\rightarrow0,\text{ as }k\rightarrow
\infty.\label{add610}%
\end{equation}

Combining Lemma \ref{le4.6}, (\ref{239}) and (\ref{add610}), the function
$\omega$ satisfies the limit equation of (\ref{242}) as follows:%
\begin{equation}
\left\{
\begin{array}
[c]{cc}%
-\Delta\omega=4\pi\delta_{x_{0}}\text{,} & \text{in }\Omega\text{,}\\
\omega=0\text{,} & \text{on }\partial\Omega\text{.}%
\end{array}
\right.  \label{02}%
\end{equation}
Hence we have $\omega=G_{\Omega,x_{0}}$.

As for the case of $p\in(1,2]$, we first claim that
\begin{equation}
\frac{\lambda_{k}}{E_{k}}\int_{\Omega}\gamma_{k}v_{k}^{p-1}dx=o_{k}\left(
1\right)  \label{01}%
\end{equation}
as $k\rightarrow\infty$. Indeed, we have proved Proposition \ref{pro4.9} holds
for $p=1$, then Theorem \ref{th1.1} holds for $p=1$. Then there exists a
positive constant ${\lambda}^{\ast}\left(  1\right)  $ such that $S_{\Omega
}\left(  \lambda,1\right)  $ is not attained for any $\lambda>{\lambda}^{\ast
}\left(  1\right)  $. It follows from H\"{o}lder's inequality that%
\[
\left\vert \Omega\right\vert ^{\frac{1}{p}-1}\left(  \int_{\Omega}\left\vert
f\right\vert ^{p}dx\right)  ^{1-\frac{1}{p}}\int_{\Omega}\left\vert
f\right\vert dx\leq\int_{\Omega}\left\vert f\right\vert ^{p}dx\text{.}%
\]
Then, we can obtain%
\begin{align*}
S_{\Omega}^{\delta} &  <\int_{\Omega}\left(  e^{4\pi u_{k}^{2}}-\lambda
_{k}u_{k}^{p}\right)  dx\\
&  <\int_{\Omega}\left(  e^{4\pi u_{k}^{2}}-\lambda_{k}\left\vert
\Omega\right\vert ^{\frac{1}{p}-1}\left(  \int_{\Omega}u_{k}^{p}dx\right)
^{1-\frac{1}{p}}\left\vert u_{k}\right\vert \right)  dx\\
&  \leq S_\Omega\left(  \lambda_{k}\left\vert \Omega\right\vert ^{\frac{1}{p}%
-1}\left(  \int_{\Omega}u_{k}^{p}dx\right)  ^{1-\frac{1}{p}},1\right)
\text{.}%
\end{align*}
By Lemma \ref{225}, we have
\[
\lambda_{k}\left\vert \Omega\right\vert ^{\frac{1}{p}-1}\left(  \int_{\Omega
}\left\vert u_{k}\right\vert ^{p}dx\right)  ^{1-\frac{1}{p}}<{\lambda}^{\ast
}\left(  1\right)  \text{.}%
\]
Applying the H\"{o}lder's inequality once more, we have%
\[
\int_{\Omega}u_{k}^{p-1}dx\leq\left(  \int_{\Omega}u_{k}^{p}dx\right)
^{1-\frac{1}{p}}\left\vert \Omega\right\vert ^{\frac{1}{p}}\text{.}%
\]
From the above analysis, we obtain%
\[
\lambda_{k}\int_{\Omega}u_{k}^{p-1}dx<{\lambda}^{\ast}\left(  1\right)
\left\vert \Omega\right\vert \text{,}%
\]
which is equivalent to
\[
\lambda_{k}\int_{\Omega}v_{k}^{p-1}dx<{\lambda}^{\ast}\left(  1\right)
\left\vert \Omega\right\vert \left(  4\pi\right)  ^{\frac{p-1}{2}}\text{.}%
\]
Consequently, by Lemma \ref{le4.5}, we have%
\[
\frac{\lambda_{k}}{E_{k}}\int_{\Omega}\gamma_{k}v_{k}^{p-1}dx\leq
\frac{{\lambda}^{\ast}\left(  1\right)  \left\vert \Omega\right\vert \left(
4\pi\right)  ^{\frac{p-1}{2}}}{\gamma_{k}\left(  S_{\Omega}^{\delta}%
-|\Omega|\right)  }\left(  1+o_{k}\left(  1\right)  \right)  =o_{k}\left(
1\right)
\]
as $k\rightarrow\infty$. Thus, the claim (\ref{01}) is proved.

It follows from Lemma \ref{le4.6}, (\ref{01}) and (\ref{239}), the limit
equation of (\ref{242}) is (\ref{02}). Thus, $\omega=G_{\Omega,x_{0}}$ and
\[
\int_{\Omega}\left(  \gamma_{k}v_{k}\right)  ^{q}dx\rightarrow\int_{\Omega
}G_{\Omega,x_{0}}^{q}dx\text{.}%
\]
In summary, we accomplish the proof of Proposition \ref{pro4.9}.
\end{proof}


\begin{thebibliography}{99}                                                                                               %
\bibitem {Adachi-Tanaka}S. Adachi, K. Tanaka, Trudinger type inequalities in
$\mathbb{R}^{N}$ and their best exponents. Proc. Amer. Math. Soc. 128, 2051-2057 (2000).

\bibitem {A}D.R. Adams, A sharp inequality of J. Moser for higher order
derivatives. Ann. of Math. 128(2), 385-398 (1988).


\bibitem {Adi}Adimurthi, O. Druet, Blow-up analysis in dimension 2 and a sharp
form of Trudinger-Moser inequality. Comm. Partial Differential Equations.
29(1-2), 295-322 (2004).

\bibitem {Car}L. Carleson, S. Y. A. Chang, On the existence of an extremal
function for an inequality of J. Moser. Bull. Sci. Math. (2). 110(2), 113-127 (1986).




\bibitem{WXChen}  W. Chen, A Trudinger inequality on surfaces with conical singularities. Proc. Amer. Math. Soc. 108(3), 821-832 (1990).

\bibitem {Chenli}W. Chen, C. Li, Classification of solutions of some nonlinear
elliptic equations. Duke Math. J. 63(3), 615-622 (1991).

\bibitem {Chenluzhu}L. Chen, G. Lu and M. Zhu, Existence and nonexistence of
extremals for critical Adams inequalities in $\mathbb{R}^{4}$ and
Trudinger-Moser inequalities in $\mathbb{R}^{2}$. Adv. Math. 368, 107143 (2020).

\bibitem{Chenluzhu-PLMS}
L. Chen, G. Lu and M. Zhu, Least energy solutions to quasilinear subelliptic equations with constant and degenerate potentials on the Heisenberg group. Proc. Lond. Math. Soc. 126(3), 518-555 (2023).


\bibitem{Chenluzhu-ANS} L. Chen, G. Lu and M. Zhu, Sharp Trudinger-Moser inequality and ground state solutions to quasi-linear Schrodinger equations with degenerate potentials in $\mathbb{R}^n$. Adv. Nonlinear Stud. 21(4), 733-749 (2021).


\bibitem{Chenluzhu-CVPDE}
 L. Chen, G. Lu and M. Zhu, Existence and non-existence of ground states of bi-harmonic equations involving constant and degenerate Rabinowitz potentials. Calc. Var. Partial Differential Equations. 62(2),1-29 (2023).



\bibitem{Chenluzhu-tra} L. Chen, G. Lu and M. Zhu,  A sharp trace Adams' inequality in $\mathbb{R}^4$ and Existence of the extremals. To appear in Analysis \& PDE.




\bibitem{Cianchi} A. Cianchi, Moser-Trudinger trace inequality. Adv. Math. 217, 2005-2044 (2008).


\bibitem{CLYZ}  A. Cianchi, E. Lutwak, D. Yang and G. Zhang, Affine Moser-Trudinger and Morrey-Sobolev inequalities. Calc. Var. Partial Differential Equations. 36(3), 419-436 (2009).

\bibitem{CohnLu-IUMJ}
W. Cohn, G. Lu, Best constants for Moser-Trudinger inequalities on the Heisenberg group. Indiana Univ. Math. J. 50(4), 1567-1591 (2001).


\bibitem {CohnLu-CPAM}W. Cohn, G. Lu, Sharp constants for Moser-Trudinger
inequalities on spheres in complex space $C^{n}$. Comm. Pure Appl. Math.
57(11), 1458-1493 (2004).

\bibitem {DelaTorre}A. DelaTorre, G. Mancini, Improved Adams-type inequalities
and their extremals in dimension 2m. Commun. Contemp. Math. 23(5), Paper No. 2050043, 52 (2021).

\bibitem{DLP}
T.  Duy, N. Lam, L. Phi, Sharp affine Trudinger-Moser inequalities: a new argument. Canad. Math. Bull. 64 (4), 765-778 (2021).

\bibitem {Fontana}L. Fontana, Sharp borderline Sobolev inequalities on compact
Riemannian manifolds. Comment. Math. Helv. 68(3), 415-454 (1993).



\bibitem {FDR}D.G. de Figueiredo, J.M. do \'{O}, B. Ruf, On an inequality by
N. Trudinger and J. Moser and related elliptic equations. Commun. Pure Appl.
Math. 55(2), 135-152 (2002).

\bibitem {do}J. M. Do O, N-Laplacian equations in $\mathbb{R}^{n}$ with critical
growth. Abstr. Appl. Anal. 2(3-4), 301-315 (1997).


\bibitem{doOLuPonciano1} J. M. do ó, G. Lu, R. Ponciano, Trudinger-Moser embeddings on weighted Sobolev spaces on unbounded domains. Discrete Contin. Dyn. Syst. 45(2), 557-584 (2025).


\bibitem{doOLuPonciano2} J. M. do ó, G. Lu, R. Ponciano, Sharp Sobolev and Adams-Trudinger-Moser embeddings on weighted Sobolev spaces and their applications. Forum Math. 36(5), 1279-1320 (2024).

 \bibitem{doOLuPonciano3} J. M. do ó, G. Lu, R. Ponciano,
  Sharp higher order Adams' inequality with exact growth condition on weighted Sobolev spaces. J. Geom. Anal. 34(5), Paper No. 139, 53 pp. (2024).




\bibitem {Dru}O. Druet, Multibumps analysis in dimension 2: quantification of
blow-up levels. Duke Math. J. 132(2), 217-269 (2006).

\bibitem {Dru2}O. Druet, P. D. Thizy, Multi-bump analysis for Trudinger-Moser
nonlinearities. I. Quantification and location of concentration points. J.
Eur. Math. Soc. 22(12), 4025-4096 (2020).



\bibitem {Flu}M. Flucher, Extremal functions for the Trudinger-Moser
inequality in 2 dimensions. Comment. Math. Helv. 67(3), 471-497 (1992).


\bibitem {Has}M. Hashizume, Maximization problem on Trudinger-Moser inequality
involving Lebesgue norm. J. Funct. Anal. 279(2), 108513-108530 (2020).

\bibitem {Ish}M. Ishiwata, Existence and nonexistence of maximizers for
variational problems associated with Trudinger-Moser type inequalities in
$\mathbb{R}^{N}$. Math. Ann. 351(4), 781-804 (2011).

\bibitem {LamLu-AIM}N. Lam, G. Lu, Sharp Moser-Trudinger inequality on the
Heisenberg group at the critical case and applications. Adv. Math. 231, 3259-3287 (2012).


\bibitem {LamLu-JDE}N. Lam, G. Lu, A new approach to sharp Moser-Trudinger and
Adams type inequalities: a rearrangement-free argument. J. Differential
Equations. 255(3), 298-325 (2013).


\bibitem{LamLuTang-NA}
 N. Lam, G. Lu and H. Tang, Sharp subcritical Moser-Trudinger inequalities on Heisenberg groups and subelliptic PDEs. Nonlinear Anal. 95, 77-92 (2014).

\bibitem {PLA}P. Laurain, Concentration of $CMC$ surfaces in a 3-manifold.
Int. Math. Res. Not. IMRN. 24, 5585-5649 (2012).



\bibitem{LLZ-CVPDE}
 J. Li, G.  Lu, and M. Zhu,  Concentration-compactness principle for Trudinger-Moser inequalities on Heisenberg groups and existence of ground state solutions. Calc. Var. Partial Differential Equations. 57(3), Paper No. 84, 26 pp.(2018).


\bibitem {LuYangQ1}J. Li, G. Lu and Q. Yang, Fourier analysis and optimal
 Hardy-Adams inequalities on hyperbolic spaces of any even dimension. Adv.
 Math. 333, 350-385 (2018).

\bibitem {Lin}K. Lin, Extremal functions for Moser's inequality. Trans. Amer.
Math. Soc. 348(7), 2663-2671 (1996).

\bibitem {Lions}P. L. Lions, The concentration-compactness principle in the
calculus of variations. The limit case. I, Rev. Mat. Iberoam. 1(1), 145-201 (1985).

\bibitem {Li1}Y. X. Li, Moser-Trudinger inequality on compact Riemannian
manifolds of dimension two. J. Partial Differential Equations, 14, 163-192 (2001).

\bibitem {Li2}Y. X. Li, Extremal functions for the Moser-Trudinger inequalities
on compact Riemannian manifolds. Sci. China Ser. A. 48(5), 618-648 (2005).

\bibitem {Li2006}Y. X.  Li, Remarks on the extremal functions for the
Moser-Trudinger inequality. Acta Math. Sin. Engl. Ser. 22(2) 545-550 (2006).

\bibitem {liruf}Y. X. Li, B. Ruf, A sharp Trudinger-Moser type inequality for
unbounded domains in $%
\mathbb{R}
^{n}$. Indiana Univ. Math. J. 57, 451-480 (2008).

\bibitem{LLWY}  X. Liang, G. Lu, X. Wang, Q. Yang, Sharp Hardy-Trudinger-Moser inequalities in any N-dimensional hyperbolic spaces. Nonlinear Anal. 199, 112031, 19 pp.(2020).

 \bibitem {Liliu}Y. X.  Li, P. Liu, Moser-Trudinger inequality on the boundary of compact Riemannian surface. Math. Z.  250, 363-386 (2005).



\bibitem{LuTang-ANS}
 G. Lu, H. Tang, Best constants for Moser-Trudinger inequalities on high dimensional hyperbolic spaces. Adv. Nonlinear Stud. 13(4), 1035-1052 (2013).



\bibitem {LSXZ}G. Lu, Y. Shen, J. Xue, M. Zhu, Weighted anisotropic
isoperimetric inequalities and existence of extremals for singular anisotropic
Trudinger-Moser inequalities. Adv. Math. 458, Paper No. 109949, 51 pp. (2024).


\bibitem{LuYang-CVPDE}
 G. Lu, Q. Yang,  A sharp Trudinger-Moser inequality on any bounded and convex planar domain. Calc. Var. Partial Differential Equations. 55(6), Art. 153, 16 pp. (2016).



\bibitem {LuYangQ2}G. Lu, Q. Yang, Sharp Hardy-Adams inequalities for
bi-Laplacian on hyperbolic space of dimension four. Adv. Math. 319, 567-598 (2017).

\bibitem{LuYangY-AIM}
G. Lu, Y. Yang, Adams inequality for bi-Laplacian and extremal functions in dimenstion four. Adv. Math. 220, 1135-1170 (2009).



\bibitem {Malchiodi}A. Malchiodi, L. Martinazzi, Critical points of the
Moser-Trudinger functional on a disk, J. Eur. Math. Soc. (JEMS) 16, 893-908 (2014).

\bibitem {Man}G. Mancini, L. Martinazzi, The Moser-Trudinger inequality and
its extremals on a disk via energy estimates. Calc. Var. Partial Differential
Equations. 56(4), 94 (2017).



\bibitem {MST}G. Mancini, K. Sandeep and C. Tintarev, Trudinger-Moser inequality in the hyperbolic space $H^{N}$. Adv. Nonlinear
Anal. 2(3), 309-324 (2013).

\bibitem {Man2}G. Mancini, P. D. Thizy, Non-existence of extremals for the
Adimurthi-Druet inequality. J. Differential Equations. 266(2-3), 1051-1072 (2019).




\bibitem {Mos}J. Moser, A sharp form of an inequality by N. Trudinger. Indiana
Univ. Math. J. 20, 1077-1092 (1970).

\bibitem {Poh}S. I. Poho\v{z}aev, On the eigenfunctions of the equation
$\Delta u+\lambda f\left(  u\right)  =0.$ Dokl. Akad. Nauk SSSR. 165, 36-39 (1965).

\bibitem {Pru}A. R. Pruss, Nonexistence of maxima for perturbations of some
inequalities with critical growth. Canad. Math. Bull. 39(2), 227-237 (1996).




\bibitem {RS}B. Ruf, F. Sani, Sharp Adams-type inequalities in $\mathbb{R}^{n}$. Trans.
Amer. Math. Soc. 365(2), 645-670 (2013).





\bibitem {struwe1984}M. Struwe, Critical points of embeddings of ${H^{1,N}}$
into Orlicz spaces. Ann. Inst. H. Poincare Anal. Non Lineaire. 5, 425-464 (1984).

\bibitem {Str}M. Struwe, Positive solutions of critical semilinear elliptic
equations on non-contractible planar domains. J. Eur. Math. Soc. 2(4), 329-388 (2000).

\bibitem {Tru}N. S. Trudinger, On imbeddings into Orlicz spaces and some
applications. J. Math. Mech. 17(5), 473-483 (1967).

\bibitem {Thi}P. D. Thizy, When does a perturbed Moser-Trudinger inequality
admit an extremal? Anal. PDE. 13(5), 1371-1415 (2020).

\bibitem {Yud}V. I. Yudovi\v{c}, Some estimates connected with integral
operators and with solutions of elliptic equations. Dokl. Akad. Nauk SSSR.
138, 805-808 (1961).



\bibitem{WangYe-AIM}
 G. Wang, D. Ye,  Hardy-Moser-Trudinger inequality. Adv. Math. 230(1), 294-320 (2012).

\bibitem {Wang Xia 2012}G. Wang, C. Xia, Blow-up analysis of a
Finsler-Liouville equation in two dimensions. J. Differential Equations. 252, 1668-1700 (2012).

\bibitem {Yang}Y. Yang, A sharp form of Moser-Trudinger inequality in high
dimension. J. Funct. Anal. 239(1), 100-126 (2007).

\bibitem {Yang1} Y. Yang, A sharp form of trace Moser-Trudinger inequality on compact Riemannian surface with boundary. Math. Z.  255, 373-392 (2007).

\bibitem {ZoZ}C. Zhou, C. Zhou, Moser-Trudinger inequality
involving the anisotropic Dirichlet norm $(\int_{\Omega}F^{N}(\nabla
u)dx)^{1/N}$ on $W_{0}^{1,N}(\Omega).$ J. Funct. Anal. 276, 2901-2935 (2019).

\bibitem{ZYCZ} T. Zhang, F. Yang, T. Cheng and C. Zhou, Anisotropic Adams' type inequality with exact growth in $\mathbb{R}^{4}$. Adv. Nonlinear Stud. 25(1), 34-55 (2025).

\end{thebibliography}
\end{document}